      \string\usetikzlibrary{decorations.markings} to use arrows with markings}{}}{}%
\renewcommand{\geq}{\geqslant}
\renewcommand{\leq}{\leqslant}
\renewcommand{\ge}{\geqslant}
\renewcommand{\le}{\leqslant}
\newtheorem{thm}{Theorem}[section]
\newtheorem{propo}[thm]{Proposition}
\newtheorem{propodef}[thm]{Definition and Proposition}
\newtheorem{lem}[thm]{Lemma}
\newtheorem{sublem}[thm]{Sublemma}
\newtheorem{lem-def}[thm]{Lemma-Definition}
\newtheorem{cor}[thm]{Corollary}
\newtheorem{conject}[thm]{Conjecture}
\newtheorem{propert}[thm]{Properties}
\newtheorem{observ}[thm]{Observation}
\newtheorem{fac}[thm]{Fact}
\newtheorem{notat}[thm]{Notation}
\theoremstyle{definition}
\newtheorem*{ack}{Acknowledgement}
\newtheorem{ex}[thm]{Example}
\newtheorem{rmk}[thm]{Remark}
\newtheorem{dfn}[thm]{Definition}
\newtheorem{quest}[thm]{Question}
\newtheorem*{abs}{Abstract}
\numberwithin{equation}{section}
\newcommand{\nc}{\newcommand}
\nc{\abst}{\begin{abs}} \nc{\xabst}{\end{abs}}
\nc{\theo}{\begin{thm}} \nc{\xtheo}{\end{thm}}
\nc{\prop}{\begin{propo}} \nc{\xprop}{\end{propo}}
\nc{\nota}{\begin{notat}} \nc{\xnota}{\end{notat}}
\nc{\depr}{\begin{propodef}} \nc{\xdepr}{\end{propodef}}
\nc{\lemm}{\begin{lem}} \nc{\xlemm}{\end{lem}}
\nc{\sublemm}{\begin{sublem}} \nc{\xsublemm}{\end{sublem}}
\nc{\lemmdefi}{\begin{lem-def}} \nc{\xlemmdefi}{\end{lem-def}}
\nc{\coro}{\begin{cor}} \nc{\xcoro}{\end{cor}}
\nc{\conj}{\begin{conject}} \nc{\xconj}{\end{conject}}
\nc{\proper}{\begin{propert}} \nc{\xproper}{\end{propert}}
\nc{\obse}{\begin{observ}} \nc{\xobse}{\end{observ}}
\nc{\ques}{\begin{quest}} \nc{\xques}{\end{quest}}
\nc{\fact}{\begin{fac}} \nc{\xfact}{\end{fac}}
\nc{\ackn}{\begin{ack}} \nc{\xackn}{\end{ack}}
\nc{\exam}{\begin{ex}} \nc{\xexam}{\end{ex}}
\nc{\rema}{\begin{rmk}} \nc{\xrema}{\end{rmk}}
\nc{\defi}{\begin{dfn}} \nc{\xdefi}{\end{dfn}}
\nc{\pf}{\begin{proof}} \nc{\xpf}{\end{proof}}
\nc{\on}{\operatorname}
\nc{\fraka}{{\mathfrak a}} \nc{\bba}{{\mathbf a}}
\nc{\frakb}{{\mathfrak b}}
\nc{\frakc}{{\mathfrak c}}
\nc{\frakd}{{\mathfrak d}}
\nc{\frake}{{\mathfrak e}}
\nc{\frakf}{{\mathfrak f}}
\nc{\frakg}{{\mathfrak g}}
\nc{\frakh}{{\mathfrak h}}
\nc{\fraki}{{\mathfrak i}}
\nc{\frakj}{{\mathfrak j}}
\nc{\frakk}{{\mathfrak k}}
\nc{\frakl}{{\mathfrak l}}
\nc{\frakm}{{\mathfrak m}}
\nc{\frakn}{{\mathfrak n}}
\nc{\frako}{{\mathfrak o}}
\nc{\frakp}{{\mathfrak p}}
\nc{\frakq}{{\mathfrak q}}
\nc{\frakr}{{\mathfrak r}}
\nc{\fraks}{{\mathfrak s}}
\nc{\frakt}{{\mathfrak t}}
\nc{\fraku}{{\mathfrak u}}
\nc{\frakv}{{\mathfrak v}}
\nc{\frakw}{{\mathfrak w}}
\nc{\frakx}{{\mathfrak x}}
\nc{\fraky}{{\mathfrak y}}
\nc{\frakz}{{\mathfrak z}}
\nc{\frakA}{{\mathfrak A}}
\nc{\frakB}{{\mathfrak B}}
\nc{\frakC}{{\mathfrak C}}
\nc{\frakD}{{\mathfrak D}}
\nc{\frakE}{{\mathfrak E}}
\nc{\frakF}{{\mathfrak F}}
\nc{\frakG}{{\mathfrak G}}
\nc{\frakH}{{\mathfrak H}}
\nc{\frakI}{{\mathfrak I}}
\nc{\frakJ}{{\mathfrak J}}
\nc{\frakK}{{\mathfrak K}}
\nc{\frakL}{{\mathfrak L}}
\nc{\frakM}{{\mathfrak M}}
\nc{\frakN}{{\mathfrak N}}
\nc{\frakO}{{\mathfrak O}}
\nc{\frakP}{{\mathfrak P}}
\nc{\frakQ}{{\mathfrak Q}}
\nc{\frakR}{{\mathfrak R}}
\nc{\frakS}{{\mathfrak S}}
\nc{\frakT}{{\mathfrak T}}
\nc{\frakU}{{\mathfrak U}}
\nc{\frakV}{{\mathfrak V}}
\nc{\frakW}{{\mathfrak W}}
\nc{\frakX}{{\mathfrak X}}
\nc{\frakY}{{\mathfrak Y}}
\nc{\frakZ}{{\mathfrak Z}}
\nc{\bbA}{{\mathbb A}}
\nc{\bbB}{{\mathbb B}}
\nc{\bbC}{{\mathbb C}}
\nc{\bbD}{{\mathbb D}}
\nc{\bbE}{{\mathbb E}}
\nc{\bbF}{{\mathbb F}} \nc{\bbf}{{\mathbf f}}
\nc{\bbG}{{\mathbb G}}
\nc{\bbH}{{\mathbb H}}
\nc{\bbI}{{\mathbb I}}
\nc{\bbJ}{{\mathbb J}}
\nc{\bbK}{{\mathbb K}}
\nc{\bbL}{{\mathbb L}}
\nc{\bbM}{{\mathbb M}}
\nc{\bbN}{{\mathbb N}}
\nc{\bbO}{{\mathbb O}}
\nc{\bbP}{{\mathbb P}}
\nc{\bbQ}{{\mathbb Q}}
\nc{\bbR}{{\mathbb R}}
\nc{\bbS}{{\mathbb S}}
\nc{\bbT}{{\mathbb T}}
\nc{\bbU}{{\mathbb U}}
\nc{\bbV}{{\mathbb V}}
\nc{\bbW}{{\mathbb W}}
\nc{\bbX}{{\mathbb X}}
\nc{\bbY}{{\mathbb Y}}
\nc{\bbZ}{{\mathbb Z}}
\nc{\calA}{{\mathcal A}}
\nc{\calB}{{\mathcal B}}
\nc{\calC}{{\mathcal C}}
\nc{\calD}{{\mathcal D}}
\nc{\calE}{{\mathcal E}}
\nc{\calF}{{\mathcal F}}
\nc{\calG}{{\mathcal G}}
\nc{\calH}{{\mathcal H}}
\nc{\calI}{{\mathcal I}}
\nc{\calJ}{{\mathcal J}}
\nc{\calK}{{\mathcal K}}
\nc{\calL}{{\mathcal L}}
\nc{\calM}{{\mathcal M}}
\nc{\calN}{{\mathcal N}}
\nc{\calO}{{\mathcal O}}
\nc{\calP}{{\mathcal P}}
\nc{\calQ}{{\mathcal Q}}
\nc{\calR}{{\mathcal R}}
\nc{\calS}{{\mathcal S}}
\nc{\calT}{{\mathcal T}}
\nc{\calU}{{\mathcal U}}
\nc{\calV}{{\mathcal V}}
\nc{\calW}{{\mathcal W}}
\nc{\calX}{{\mathcal X}}
\nc{\calY}{{\mathcal Y}}
\nc{\calZ}{{\mathcal Z}}
\nc{\Spac}{{\mathrm{Spaces}}}
\nc{\scrA}{{\mathscr A}}
\nc{\scrE}{{\mathscr E}}
\nc{\scrR}{{\mathscr R}}
\nc{\Bmu}{\mbox{$\raisebox{-0.59ex}{$l$}\hspace{-0.18em}\mu\hspace{-0.88em}\raisebox{-0.98ex}{\scalebox{2}{$\color{white}.$}}\hspace{-0.416em}\raisebox{+0.88ex}{$\color{white}.$}\hspace{0.46em}$}{}}
\nc{\bnu}{{\bar{ \nu}}}
\nc{\olO}{\bar{\calO}}
\nc{\al}{{\alpha}} 
\nc{\be}{{\beta}}
\nc{\ga}{{\gamma}} \nc{\Ga}{{\Gamma}}
 \nc{\hGa}{\hat{\Gamma}}
\nc{\ve}{{\varepsilon}} 
\nc{\la}{{\lambda}} \nc{\La}{{\Lambda}}
\nc{\om}{\omega} \nc{\Om}{\Omega} 
\nc{\sig}{{\sigma}} \nc{\Sig}{{\Sigma}}
\nc{\tnb}{\psi_{\rm tame}}
\nc{\oM}{\overline{{M}}}
\nc{\op}{{\on{op}}}
\nc{\ad}{{\on{ad}}}
\nc{\alg}{{\on{alg}}}
\nc{\Ad}{{\on{Ad}}}
\nc{\Adm}{{\on{Adm}}} \nc{\aff}{{\on{aff}}}
\nc{\Aut}{{\on{Aut}}}
\nc{\Bun}{{\on{Bun}}}
\nc{\cha}{{\on{char}}}
\nc{\der}{{\on{der}}}
\nc{\Der}{{\on{Der}}}
\nc{\diag}{{\on{diag}}}
\nc{\End}{{\on{End}}}
\nc{\Fl}{{\calF\!\ell}}
\nc{\Tr}{{\on{Transp}}}
\nc{\TR}{{\calT\!\calR}}
\nc{\Gal}{{\on{Gal}}}
\nc{\Gr}{{\on{Gr}}}
\nc{\rH}{{\on{H}}}
\nc{\Hom}{{\on{Hom}}}
\nc{\IC}{{\on{IC}}}
\nc{\id}{{\on{id}}}
\nc{\Id}{{\on{Id}}}
\nc{\ind}{{\on{ind}}}
\nc{\Ind}{{\on{Ind}}}
\nc{\Lie}{{\on{Lie}}}
\nc{\Pic}{{\on{Pic}}}
\nc{\pr}{{\on{pr}}}
\nc{\Res}{{\on{Res}}}
\nc{\res}{{\on{res}}} \nc{\Sat}{{\on{Sat}}}
\nc{\s}{{\on{sc}}}
\nc{\drv}{{\on{der}}}
\nc{\sgn}{{\on{sgn}}}
\nc{\Spec}{{\on{Spec}}}\nc{\Spf}{\on{Spf}} 
\nc{\Sph}{\on{Sph}}
\nc{\St}{{\on{St}}}
\nc{\tr}{{\on{tr}}}
\nc{\Mod}{{\mathrm{-Mod}}}
\nc{\Hilb}{{\on{Hilb}}} 
\nc{\Ext}{{\on{Ext}}} 
\nc{\vs}{{\on{Vec}}}
\nc{\ev}{{\on{ev}}}
\nc{\nO}{{\breve{\calO}}}
\nc{\tS}{{\tilde{S}}}
\nc{\spe}{{\on{sp}}}
\nc{\loc}{{\on{loc}}}
\nc{\Sym}{{\on{Sym}}}
\nc{\Cone}{{\on{C}}}
\nc{\syn}{{\on{syn}}}
\nc{\reg}{{\on{reg}}}
\nc{\colim}{{\on{colim}}}
\nc{\Norm}{{\on{N}}}
\nc{\nscrR}{{\mathscr{R}^{\on{nr}}}}
\nc{\GL}{{\on{GL}}}
\nc{\U}{{\on{U}}}
\nc{\Gl}{\on{Gl}} 
\nc{\GSp}{{\on{GSp}}}
\nc{\gl}{{\frakg\frakl}}
\nc{\SL}{{\on{SL}}} 
\nc{\SU}{{\on{SU}}} 
\nc{\SO}{{\on{SO}}}
\nc{\PGL}{{\on{PGL}}}
\nc{\Conv}{{\on{Conv}}}
\nc{\Rep}{{\on{Rep}}}
\nc{\Dom}{{\on{Dom}}}
\nc{\red}{{\on{red}}}
\nc{\act}{{\on{act}}}
\nc{\nr}{{\on{nr}}}
\nc{\ctf}{{\on{ctf}}}
\nc{\str}{{\on{-}}} 
\nc{\os}{{\bar{s}}}
\nc{\oeta}{{\bar{\eta}}}
\nc{\hookto}{\hookrightarrow}
\nc{\longto}{\longrightarrow}
\nc{\leftto}{\leftarrow}
\nc{\onto}{\twoheadrightarrow}
\nc{\lonto}{\twoheadleftarrow}
\newcommand*\isomto{%
  \renewcommand{\arraystretch}{0.1}
  \begin{array}[b]{c} {}_{\sim} \\ \longrightarrow \end{array}%
}
\nc{\uG}{{\underline{G}}}
\nc{\uA}{{\underline{A}}}
\nc{\uS}{{\underline{S}}}
\nc{\uT}{{\underline{T}}}
\nc{\uM}{{\underline{M}}}
\nc{\uP}{{\underline{P}}}
\nc{\uB}{{\underline{B}}}
\nc{\uN}{{\underline{N}}}
\nc{\ucG}{{\underline{\calG}}}
\nc{\ucA}{{\underline{\calA}}}
\nc{\ucS}{{\underline{\calS}}}
\nc{\ucT}{{\underline{\calT}}}
\nc{\ucalM}{{\underline{\calM}}}
\nc{\ucP}{{\underline{\calP}}}
\nc{\ucalN}{{\underline{\calN}}}
\nc{\bF}{{\breve{F}}}
\nc{\oFl}{{\overline{\Fl}}} 
\nc{\bU}{{\overline{U}}}
\nc{\tGr}{{\tilde{\Gr}}}
\nc{\cGr}{\calG\! r}
\nc{\oGr}{\overline{\on{Gr}}} 
\nc{\ocGr}{\overline{\calG\! r}}
\nc{\co}{{\colon}}
\nc{\sch}[1]{(Sch/{#1})}
\nc{\HypLoc}[1]{HypLoc({#1})}
\nc{\ohtimes}{\stackrel{!}{\otimes}}
\nc{\boxtilde}{\widetilde{\boxtimes}}
\nc{\vstar}{{\varhexstar}}
\nc{\Div}{\on{Div}}
\nc{\Sht}{\on{Sht}}
\nc{\Frob}{\on{Frob}}
\nc{\x}{\times}
\nc{\bsl}{\backslash}
\nc{\algQl}{{\bar{\bbQ}_\ell}}
\nc{\sF}{{\bar{F}}}
\nc{\nF}{{\breve{F}}}
\nc{\nW}{{W^{\on{nr}}}}
\nc{\sk}{{\bar{k}}}
\nc{\cont}{\on{c}}
\nc{\Supp}{\on{Supp}}
\nc{\blt}{\bullet}  
\nc{\dom}{\on{dom}}
\nc{\scon}{{\on{sc}}} 
\nc{\Affine}{\on{Aff}} 
\nc{\nscrA}{\mathscr{A}^{\on{nr}}} 
\nc{\nfraka}{{\bbf^{\on{nr}}}}
\nc{\ran}{{\rangle}}
\nc{\lan}{{\langle}}
\nc{\bk}{{\bar{k}}}
\nc{\tF}{{\tilde{F}}}
\nc{\sS}{{\bar{S}}}
\nc{\LG}{{^\text{L}\hspace{-0.04cm}G}}
\nc{\LL}{{^\text{L}\hspace{-0.07cm}L}}
\nc{\et}{{\text{\rm \'et}}}
\nc{\inv}{{\on{inv}}}
\nc{\Hecke}{{\on{Hecke}}}
\nc{\Isom}{{\on{Isom}}}
\nc{\oSht}{{\overline{\on{Sht}}}}
\nc{\umu}{{\underline \mu}}
\nc{\AIJ}{{\calO_X[{\scriptstyle{\calI\over \calJ}}]}}
\nc{\Proj}{{\on{Proj}}}
\nc{\Bl}{{\on{Bl}}}
\nc{\Pos}{{\on{Pos}}}
\nc{\Sets}{{\on{Sets}}}
\nc{\AffSch}{{\on{AffSch}}}
\nc{\Groups}{{\on{Groups}}}
\nc{\Gpds}{{\on{Groupoids}}}
\nc{\Sch}{{\on{Sch}}}
\nc{\fl}{{\on{flat}}}
\nc{\pot}[1]{ [\hspace{-0,5mm}[ {#1} ]\hspace{-0,5mm}] }
\nc{\rpot}[1]{ (\hspace{-0,7mm}( {#1} )\hspace{-0,7mm}) }
\nc{\defined}{\hspace{0.1cm}\stackrel{\text{\tiny \rm def}}{=}\hspace{0.1cm}}
\newcolumntype{R}[1]{>{\raggedleft\arraybackslash }b{#1}}
\newcolumntype{L}[1]{>{\raggedright\arraybackslash }b{#1}}
\newcolumntype{C}[1]{>{\centering\arraybackslash }b{#1}}
\begin{document}

\title{A survey on algebraic dilatations}

\shortauthors{{ Adrien Dubouloz, Arnaud Mayeux and João Pedro dos Santos }}

\classification{13A99, 14A20, 14L15}
\keywords{survey, algebraic geometry, commutative algebra, Grothendieck schemes, group schemes, algebraic dilatations, dilatations of schemes, multi-centered dilatations, localizations of rings, mono-centered dilatations, affine modifications, affine blowups, formal blowups, Kaliman-Zaidenberg modifications, Néron blowups, Tannakian groups, differential Galois groups, congruent isomorphisms, Moy-Prasad isomorphism, representations of $p$-adic groups, torsors, level structures, shtukas, affine geometry, $\bbA^1$-homotopy theory}

\maketitle
\[\text{{\large {Adrien Dubouloz, Arnaud Mayeux and João Pedro dos Santos }}}\]

\[\text{\today} \]

$~~$

\abst
In this text, we wish to provide the reader with a short guide to recent works on  the theory of  dilatations in  Commutative Algebra and Algebraic Geometry. These works fall naturally into two categories: one emphasises {\it foundational and theoretical} aspects and the other {\it applications} to existing theories.
\xabst

\tableofcontents

\bigskip 
\bigskip 
\bigskip

{\small\ackn
 {{This project has received funding from the European Research Council (ERC) under the European Union’s Horizon 2020 research and innovation programme (A.\,M. grant agreement No 101002592). A.\,M. is supported by ISF grant 1577/23.}}
\xackn}

\newpage

\section*{Introduction}

\begin{center}
\textbf{\textit{{What is the concept of algebraic dilatations about ? }}}
\end{center}
Dilatation of rings is a basic construction of commutative algebra, like localization or tensor product. It can be globalized so that it also makes sense on schemes or algebraic spaces. In fact dilatations generalize localizations.

Let $A$ be a ring and let $S$ be a multiplicative subset of $A$. Recall that the localization $S^{-1}A$ is an $A$-algebra such that for any $A$-algebra $ A \to B $ such that the image of $s$ is invertible for any $s \in S$, then $A \to B$ factors through $A \to S^{-1}A$. Intuitively, $S^{-1}A$ is the $A$-algebra obtained from $A$ adding all fractions $\frac{a}{s}$ with $a \in A $ and $s \in S$.
Formally, $S^{-1} A$ is made of classes of fractions $\frac{a}{s}$ where $a \in A$ and $s \in S $ (two representatives $\frac{a}{s}$ and $\frac{b}{t}$ are identified if $atr=bsr$ for some $r \in S$), addition and multiplication are given by usual formulas. Now let us give for any element $s \in S$ an ideal $M_s$ of $A$ containing $s$. The dilatation of $A$ relatively to the data $ \{M_s \}_{s \in S}$ is an $A$-algebra $A'$ obtained intuitively by adding to $A$ only the fractions $\frac{m}{s}$ with $s \in S $ and $m \in M_s$. The dilatation $A'$ satisfies that for any $s \in S$, we have $s A'= M_s A' $ (intuitively any $m \in M_s$ belongs to $s A'$, i.e. becomes a multiple of $s$, so that we have an element $\frac{m}{s} $ such that $m = s  \frac{m}{s} $). As a consequence of the construction, the elements $s \in S$ become non-zero-divisor in $A'$ so that $\frac{m}{s}$ is well-defined (i.e. unique). It turns out that it is convenient, with dilatations of schemes in mind, to make a bit more flexible the above framework, namely to remove the conditions that $S$ is multiplicative and that $s \in M_s$, so we use the following definition. 

\begin{flushleft}
\textsc{Definition.} Let $A$ be a ring. Let $I$ be an index set. A multi-center in $A$ indexed by $I$ is a set of pairs $\{[M_i , a_i ]\}_{i \in I}$ where for each $i$, $M_i$ is an ideal of $A$ and $a_i$ is an element of $A$.
\end{flushleft}

To each multi-center $\{[M_i , a_i ]\}_{i \in I}$, one has the dilatation $A [\{\frac{M_i}{a_i}\}_{i \in I}]$ which is an $A$-algebra.  We will define and study in details dilatations of rings in Section \ref{09.05.2023--1}, in particular we will state formally the universal property they enjoy. We will also see that $A [\{\frac{M_i}{a_i}\}_{i \in I}]$ is generated, as $A$-algebra, by $\{\frac{M_i}{a_i}\}_{i \in I}$. We will also see that if $M_i = A$ for all $i$, then $A [\{\frac{M_i}{a_i}\}_{i \in I}]= S^{-1} A$ where $S$ is the multiplicative subset generated by $\{a _i \} _{i \in I }$. Reciprocally, we will see that any sub-$A$-algebra of a localization $S^{-1} A$ for a certain $S$ is isomorphic to a dilatation of $A$. 

Dilatations of schemes and algebraic spaces are obtained from dilatations of rings via glueing. We introduce the following definition.

\begin{flushleft}

\textsc{Definition.} Let $X$ be a scheme. Let $I$ be an index set. A multi-center in $X$ indexed by $I$ is a set of pairs $\{[Y_i, D_i]\}_{i \in I } $ such that $Y_i$ and $D_i $ are closed subschemes for each $i$ and such that locally, all $D_i$ are principal for $i \in I$.
\end{flushleft}

Associated to each multi-center, one has the dilatation $\Bl \big\{{}^{D_i}_{Y_i} \big\}_{i \in I } X $ which is a scheme endowed with a canonical affine morphism $f: \Bl \big\{{}^{D_i}_{Y_i} \big\}_{i \in I } X \to X$. It satisfies, in a universal way, that $f^{-1} (D_i)$ is a Cartier divisor (i.e. is locally defined by a non-zero-divisor) and that $f^{-1} (D_i ) = f^{-1} (Y_i \cap D_i)$ for all $i \in I$. If $\#I=1$, we use the terminology mono-centered dilatation.
 We will study several facets of this construction and show that it enjoys many wonderfull properties in Sections \ref{333} and \ref{neron}.

\begin{center}
\textbf{\textit{{Where does this construction come from ? }}}
\end{center}

As we saw in the previous section, dilatations are a basic construction which can be easily encountered in specific situations. As a consequence it was used for a very long time.
As the reader may well know, the theory of dilatations has deep and distinguished roots, even though not formulated in the language which we use. Right from the start, we warn the reader that we do not mean to, and probably could not, present a comprehensive historical account.
 As soon as Cremona and Bertini started using quadratic transformations (or blowups) in the framework of algebraic geometry over fields, ``substitutions'' of the form $x'=x$ and $y'=y/x$ started being made by algebraic geometers, see for example equation (8) in \cite[Section 11]{No1884} and Noether's acknowledgement, at the start of \cite[Section 12]{No1884}, that these  manipulations come from Cremona's point of view. Examples of dilatations appear frequently in some works of Zariski and Abhyankar, cf. \cite[Definition, p. 86]{abhyankar-zariski55} and \cite[p499 proof of Th.4, case (b)]{Za43}. Other forerunner examples of dilatations play a central role in several independent and unrelated works later,  cf. \cite{Da67},  \cite[Section 25]{Ner64} and \cite[Section 4]{artin69}. 
As far as we know, the terminology of dilatations emerged in \cite[§3.2]{BLR90}, where a section is devoted to study dilatations of schemes over discrete valuation rings systematically. The treatment of dilatations of \cite{BLR90} was used in \cite{GY00}, \cite{CY01}, \cite{Ch18} and many other references to construct models. Still in the context of schemes over a discrete valuation ring, we draw the reader's attention to \cite{Ana73},  \cite{WW80} and \cite{PY06} which contain results on dilatations. 
Certain dilatations also appear as the central geometric tool in the study of the ramification theory of $\ell$-adic sheaves in positive characteristic \cite{AbSa07,AbSa09,Sai09}. 
The paper \cite{KZ99} studies dilatations (under the name affine modifications) systematically in the framework of algebraic geometry over fields. Over two-dimensional base schemes dilatations also appear in \cite[p. 175]{PZ13}.
Set aside localizations, mono-centered dilatations have been the main focus of mathematicians in the past. However, in the context of group schemes over discrete valuation rings, examples of multi-centered dilatations of rings and schemes that are not localizations or mono-centered dilatations appeared and were used in \cite[Exp. VIB Ex. 13.3]{DG70}, \cite{PY06} and \cite{duong-hai-dos_santos18}. In recent times, the authors of \cite{Du05}, \cite{MRR20} and \cite{Ma23d} have set out to accommodate all these constructions in a larger and unified frame, namely for arbitrary schemes and algebraic spaces and arbitrary multi-center. The paper \cite{MRR20} introduces dilatations of arbitrary schemes in the mono-centered case and provides a systematic treatment of mono-centered dilatations of general schemes. An equivalent definition of mono-centered dilatations of general schemes, under the name affine modifications, was introduced earlier in \cite[Définition 2.9]{Du05} under a few assumptions. The paper \cite{Ma23d} introduces and studies dilatations of arbitrary rings, schemes and algebraic spaces for arbitrary multi-centers. Allowing multi-centers also leads naturally to the formulation of combinatorial isomorphisms on dilatations and gives birth to refined universal properties.
Nevertheless, the mono-centered case remains a fundamental case that is frequently the 'atom' for some aspects of the theory. 

The first part (Sections \ref{09.05.2023--1}-\ref{neron}) of this survey is devoted to theoretical and formal results on dilatations of rings, schemes and group schemes following \cite{Du05}, \cite{MRR20} and \cite{Ma23d}. Sections \ref{Tannak}-\ref{Topol} of the second part will deal with several concrete situations where specific kinds of dilatations play a role, annd will provide complementary inceptions on this construction.
To finish, beyond rings and schemes, the concept of dilatations makes sense for other structures and geometric settings. Let us indicate some constructions already available.
Some dilatation constructions in the framework of complex analytic spaces were introduced in \cite{Ka94}, these are used and discussed in Section \ref{Topol}. Dilatations also make sense for general algebraic spaces \cite{Ma23d}. Similarly, for many other structures than rings, dilatations also make sense (e.g. categories, monoids and semirings) as noticed in \cite{Ma23c}. It is possible that dilatations in other settings will be explored and find a significant role since, at the end, these are a basic mathematical concept.

\begin{center}
\textbf{\textit{Terminology}}
\end{center}
Recall that dilatations have distinguished  roots. As a consequence, several other terminologies are used to call certains dilatations in literature.  For examples the constructions named \textit{affine blowups}, \textit{affine modifications}, \textit{automatic blowups}, \textit{formal blowups}, \textit{Kaliman modifications}, \textit{localizations} and \textit{Néron blowups} are examples of (eventually multi-centered) dilatations.

\begin{center}
\textbf{{\textit{Some simple examples}}}
\end{center} 

We provided an intuition on dilatations of rings before. Let us now provide some simple examples of dilatations of schemes. If $S$ is a scheme, then we denote by $e_S$ the trivial group scheme over $S$, as scheme it is isomorphic to $S$. If $G $ is a separated group scheme over $S$, then we denote by $e_G$ the trivial closed group scheme. Note that $e_G $ is isomorphic to $e_S$ as group schemes over $S$.
\begin{enumerate}
 \item  We consider, once given a prime number $p$, the  multiplicative  group scheme \[G= \mathbb G_{m,\bbZ _p}\] over the ring $\mathbb Z_p$; its Hopf algebra is $A=\mathbb Z_p[x,x^{-1}]$ while the morphism $\Delta:A\to A\otimes _{\bbZ _p} A$  induced from multiplication $G\times _ {\bbZ_p} G \to G$ is defined by
  $\Delta(x)= x\otimes x$. Now, consider the couple $ e_{G} \text{ and }  G \times _{\bbZ _p} \bbZ _p / \frakp^r \text{ of closed subschemes of } G $ for any $r >0$ ($\frakp^r$ denotes $p^r\bbZ _p$). These are cut out, respectively, by the ideals $I=(x-1)$ and $(p^r)$ of $A$.
\begin{enumerate} \item For any $r >0$, the dilatation $A'$ of $A$ centered at $[e_G ,  G \times _{\bbZ _p} \bbZ _p / \frakp^r]$, or at $[I,(p^r)]$,  is the sub-$A$-algebra of $A[1/p]$ generated by all the elements $p^{-r}f$, where $f\in I$.  The dilatation $G':=\mathrm {Spec}\, A'$ is a group scheme of finite type over $\bbZ_p$. The base change $G ' \times _{\bbZ _p} \bbZ _p / \frakp ^r$ is isomorphic to the {\it additive} group $\mathbb G_a$ over $ \bbZ_p /\frakp ^r$, while $G' \times _{\bbZ _p} \bbQ _p$ is the {\it multiplicative} group $\mathbb G_m$ over $\mathbb Q_p$. Furthermore, on the level of points, $G'(\bbZ_p)=1+\frakp^r$ is a {\it congruence subgroup}.
\item The dilatation $A'$ of $A$ centered at $\{ [e_G ,  G \times _{\bbZ _p} \bbZ _p / \frakp^r ] \} _{\{r > 0 \}}$,
 or at $\{[I,(p^r)]\}_{\{r >0\}}$,  is the sub-$A$-algebra of $A[1/p]$ generated by all the elements $p^{-r}f$, where $f\in I$ and $r > 0 $. 
 The dilatation $G':=\mathrm {Spec}\, A'$ is a group scheme over $\bbZ_p$ which is not of finite type. The base change $G ' \times _{\bbZ _p} \bbZ _p / \frakp ^r$ is isomorphic to the {\it trivial} group scheme over $ \bbZ_p /\frakp ^r$, while $G' \times _{\bbZ _p} \bbQ _p$ is the {\it multiplicative} group $\mathbb G_m$ over $\mathbb Q_p$. Furthermore, on the level of points, $G'(\bbZ_p)=\{1\}$.
 \end{enumerate}
 \item Let $G $ be $GL_3$ over $\bbZ _p $ and let $H $ be  $GL_2 \times e _{\bbZ _p} $ diagonally inside $G$ and let $e_G \cong (e_{\bbZ _p})^3$ be the trivial closed subgroup of $G$. For any $r>0$, let $G \times _{\bbZ _p}\bbZ _p/\frakp^r$ be the base change of $G$ to $\bbZ /p^r \bbZ$.
  The dilatation $G' =\Bl \big\{{}^{G \times _{\bbZ _p}  \bbZ _p /\frakp^5 }_{~~H} , {}^{G \otimes \bbZ _p /\frakp ^2 }_{~e_G} \big\} G$ 
  is a group scheme over $\bbZ _p$. On the level of points, we have 
  
 \begin{align*}  GL_3 (\bbZ _p)  \supset G' ( \bbZ _p ) &= \begin{pmatrix} 1+ \frakp^2 & \frakp ^2 & \frakp ^5 \\ \frakp^2 & 1+ \frakp^2 & \frakp ^5 \\ \frakp^5 & \frakp^5 & 1+ \frakp^5 \end{pmatrix} \\ &= \left\{\begin{pmatrix} 1+ a & b & e\\ c & 1+ d & f \\ g & h & 1+ k \end{pmatrix} \middle| a, b , c, d \in \frakp ^{2} ~~ e,f,g,h,k \in \frakp ^5  \right\} . \end{align*}

\item Let $X = \bbA^1 = \Spec (\bbZ [T])$ be the affine line over $\bbZ$, let $0 \subset \bbA^1 $ be the closed subscheme defined by the ideal $(T)$ and let $\emptyset \subset \bbA ^1$ be the closed subscheme defined by the ideal $\bbZ[T]$. Then the dilatation $\Bl _{\emptyset} ^{D} X$ identifies with the open subscheme $\bbG _m$ of $\bbA^1$, indeed $\bbZ [T] [\frac{\bbZ[T]}{T}] \cong \bbZ [T , T^{-1}]$. This is an example of localization.

\item Let $X = \bbA ^2 = \Spec ( \bbZ [A,B]) $ be the affine plane over $\bbZ$, let $D\subset \bbA ^2$ be the line defined by the ideal $(A)$ and let $0\subset \bbA^2$ be the origin defined by the ideal $(A,B)$. Then $\Bl  ^{D}_{0}  X$ identifies with $\Spec (\bbZ [A,B,C]/(AC-B))$. Indeed, one has an isomorphism (e.g. by Proposition \ref{regupoly})
\[\bbZ[A,B][\frac{(A,B)}{A}]\cong \bbZ[A,B][\frac{(B)}{A}]  \cong \bbZ[A,B,C]/(AC-B) .\] The morphism $\Bl _0 ^D X \to X $ is given by  
$\bbZ [A,B] \to \bbZ [A,B,C]/(AC-B) , A,B \mapsto A,B $. At the level of points $\big(\Bl _0 ^D X \big)(\bbZ )$ is made of pairs $(a,b) \in \bbZ ^2$ such that $b$ is a multiple of $a.$

\item More advanced examples of dilatations in contextual situations are available in the second part of this survey.
\end{enumerate}

\begin{center}
\textbf{\textit{{What is the aim of this survey ? }}}
\end{center}

Recall that we wish to provide the reader with a short guide to recent works on  the theory of  dilatations. We do not mean to present a comprehensive account. We mainly concentrate on the contributions that ourselves were responsible for  \cite{Du05, DF18, duong-hai-dos_santos18,  Ma19t, hai-dos_santos21, MRR20, ADO21, Ma23d} and those which were our starting points \cite{artin69, Ana73, And01, WW80, BLR90, KZ99, Yu15, PY06, PZ13, HKO16}. 

Part I is devoted to an exposition of general definitions and results around the concept of algebraic dilatations introduced and proved in \cite{MRR20} and \cite{Ma23d}. Section \ref{09.05.2023--1} discusses dilatations of commutative unital rings. Section \ref{333} summarizes general results on dilatations of schemes. Section \ref{neron} focuses on dilatations of group schemes.

 Part II provides an overview on some applications of dilatations in various mathematical contexts. 
In Section \ref{Tannak}, we explain some recent applications of dilatations to the theory of affine group schemes and their representation categories in the case where objects are defined over a discrete valuation ring $R$. These were developed mainly in order to advance the study of Tannakian categories defined over $R$ and  appearing in geometry, such as  the case of group schemes associated to $\mathcal D$-modules \cite{And01,dos_santos09,duong-hai18,duong-hai-dos_santos18,hai-dos_santos21} and principal bundles with finite structure groups \cite{hai-dos_santos23}. After a brief introduction to Tannakian categories over $R$ (Section \ref{Tannak1}), we go  on to explain how to filter these categories by smaller ones and produce in this way the ``Galois-Tannaka'' group schemes \ref{Tannak2}. We show why N\'eron blowups are a fundamental tool for studying these groups and explain what has been done so far in order to exhaust Galois-Tannaka groups by means of N\'eron blowups, both in the case of mono-centered and multi-centered N\'eron blowups (cf. Section \ref{Tannak2} and Section \ref{Tannak3}).  
  In Section \ref{RepIso}, \textit{congruent isomorphisms},  formulated and stated using the language of dilatations, are discussed in relation with \textit{the Moy-Prasad isomorphism}, \textit{Bruhat-Tits buildings} and \textit{representations of $p$-adic groups}. Section \ref{TorShtuk} shows that many level structures
on \textit{moduli stacks of $G$-bundles} are encoded in torsors under dilatations and that this can be used to obtain integral models of \textit{shtukas}. Section \ref{Topol} discusses dilatations in \textit{affine geometry} and related progress in \textit{$\bbA ^1$-homotopy theory}.

 All results stated in this paper are proved in indicated references. 
This survey is an expository text and does not contain any new mathematical result. What is perhaps new is to summarize some aspects of several independent works involving dilatations in a single text. We hope this could be a source of inspiration for future works.

\section*{Some conventions and notations}

\begin{enumerate}[(1)]
\item All rings are unital and commutative, unless otherwise mentioned. 
\item Let   $(M,+)$ be a monoid. A submonoid  $F$ is a face of $M$ if whenever $x+y\in F$, then both $x$ and $y$ belong to $F$. 
\item If $R$ is a discrete valuation ring with field of fractions $K$, then, for each $R$-scheme, we call $X\otimes_R K$ the generic fibre of $X$. 
\item If $A$ is a ring, then $A\text{-}\mathbf{mod}$ is the category of finitely presented $A$-modules. 
\item If $G$ is a group scheme over a noetherian ring $R$, or an abstract group,  then we denote by  $\mathrm {Rep}_R(G)$ the category of all $R$--modules of finite type affording a representation of $G$ as explained in \cite{jantzen03}.
\end{enumerate}

\part{Algebraic dilatations}

In this part, we introduce formally dilatations of rings and schemes. Locally, dilatations of schemes will be studied through dilatations of rings.

\section{Dilatations of rings}\label{09.05.2023--1} 
 We summarily  present basic results on dilatations of rings following the more general path given in \cite{Ma23d}. 
Let $A$ be a  ring. A {\it center in $A$} is a pair $[M,a]$ consisting of an ideal $M\subset A$ and an element $a\in A$. A {\it multi-center} is a family of center indexed by some set. Let $I$ be an index set and let  $\{[M_i , a_i] \}_{i \in I}$ be a multi-center. For $i \in I$, we put $L_i = M_i +(a_i)$, an ideal of $A$. Let $\bbN _I $ be the monoid $\bigoplus _{i \in I} \bbN $. If $\nu = (\nu_1 , \ldots , \nu_i, \ldots ) \in \bbN _I $ we put $L^{\nu}= L_1 ^{\nu_1} \cdots L_i ^{\nu _i} \cdots  $ (product of ideals of $A$) and $a^{\nu}= a_1^{\nu_1 } \cdots a_i ^{\nu _i} \cdots$ (product of elements of $A$). We also put $a^{\bbN _I} = \{ a^\nu | \nu \in \bbN _I \}$. 

\depr[{\cite{Ma23d}}] \label{generaldila} The dilatation of $A$ with multi-center $\{[M_i , a_i] \}_{i \in I}$ is the unital commutative ring $A[\big\{ \frac{M_i}{a_i}\big\}_{i \in I}]$ defined as follows:

$\bullet$  The underlying set of $A[\big\{ \frac{M_i}{a_i}\big\}_{i \in I}]$ is the set of equivalence classes of symbols $\frac{m}{a^{\nu}} $ where $ \nu \in \bbN _I$ and $m \in L^{\nu}$ under the equivalence relation 
\[ \frac{m}{a^\nu} \equiv \frac{p}{a^{\lambda}} \Leftrightarrow \exists \beta \in \bbN _I \text{ such that } ma^{\beta + \lambda }= p a^{\beta + \nu } \text{ in } A.\] From now on, we abuse notation and denote a class by any of its representative $\frac{m}{a^{\nu}}$ if no confusion is likely.

$\bullet$ The addition law is given by $\frac{m}{a^{\nu}}+ \frac{p}{a^{\beta}}= \frac{m a^{\beta } + p a^{\nu}}{a^{\beta + \nu}}$.

$\bullet$ The multiplication law is given by $\frac{m}{a^{\nu}}\times  \frac{p}{a^{\beta}} = \frac{ mp}{a^{\nu + \beta }}$.

$\bullet$ The additive neutral element is $\frac{0}{1}$ and the multiplicative neutral element is $\frac{1}{1}$.
\begin{flushleft}
From now on, we also use the notation $A[\frac{M}{a}]$ to denote $A[\big\{ \frac{M_i}{a_i}\big\}_{i \in I}]$.
We have a canonical morphism of rings $A \to A[\frac{M}{a}]$ given by $x \mapsto \frac{x}{1}$.
\end{flushleft}
\xdepr

The element $\frac{x}{1}$  of $ A[\frac{M}{a}]$ will be denoted by $x$ if no confusion is likely.

\fact[{\cite{Ma23d}}] \label{remdilaring} \begin{enumerate}  \item  Let $\{N_i\}_{i \in I}$ be ideals in $A$ such that  $ N_i +(a_i) = L_i$ for all $i \in I$.  Then we have identifications of $A$-algebras $A[\big\{ \frac{M_i}{a_i}\big\}_{i \in I}]= A[\big\{ \frac{N_i}{a_i}\big\}_{i \in I}]= A[\big\{ \frac{L_i}{a_i}\big\}_{i \in I}].$
\item  Dilatations of rings generalize entirely localizations of rings. Indeed, let $A$ be a ring and let $S$ be a multiplicative subset of $A$.  Then $S^{-1} A = A[\big\{ \frac{A}{s}\big\}_{s \in S}]$. 
\item Any sub-$A$-algebra of a localization $S^{-1} A$ for a subset $S \subset A$ can be obtained as a multi-centered dilatation. 
\item  Note that we did not use substraction to define dilatations of rings. In fact Definition \ref{generaldila} makes sense for arbitrary unital commutative semirings, cf. \cite[§2]{Ma23d} or more generally for categories (e.g. monoids) cf. \cite{Ma23c}. \end{enumerate}
\xfact

This construction enjoys the following properties, cf. \cite[§2]{Ma23d}. If $\#I=1$, most results appear in \cite[\href{https://stacks.math.columbia.edu/tag/052P}{Tag 052P}]{stacks-project}.
\prop[{\cite{Ma23d}}] The following assertions hold.
 \begin{enumerate} \item As $A$-algebra, $A[\frac{M}{a}]$ is generated by $\big\{ \frac{L_i}{a_i}\big\}_{i \in I}$. Since $L_i=M_i+(a_i)$, this implies that  $A[\frac{M}{a}]$ is generated by $\big\{ \frac{M_i}{a_i}\big\}_{i \in I}$.
 \item If $A$ is a domain and $a_i \ne 0$ for all $i$, then $A[\frac{M}{a}]$ is a domain.
 \item If $A$ is reduced, then $A[\frac{M}{a}]$ is reduced.
\item  The following assertions are equivalent.\begin{enumerate}
\item  There exists $\nu \in \bbN_I$ such that $a^\nu =0$ in $A$.
\item The ring $A[\frac{M}{a}]$ equals to the zero ring.\end{enumerate}
\item   Let $\nu  $ be in $\mathbb{N} _I$. The image of $a ^\nu$ in $A[\frac{M}{a}]$ is a non-zero-divisor.
\item  Let $f:A \to B$ be a morphism of rings. Let $\{[N_i,b_i]\}_{i \in I}$ be centers of $B$ such that $f(M_i) \subset N_i$  and $f(a_i)=b_i$ for all $i\in I$.  Then we have a canonical morphism of $A$-algebras
\begin{center} $\phi: A[\big\{ \frac{M_i}{a_i}\big\}_{i \in I}] \to B[\big\{ \frac{N_i}{b_i}\big\}_{i \in I}]$ .\end{center}
\item  Let $c$ be a non-zero-divisor element in $A$. Then $\frac{c}{1}$ is a non-zero-divisor in $A[\frac{M}{a}]$.
\item Let $K \subset I$ and put $J = I \setminus K$. We have a canonical morphism of $A$-algebras
\begin{center} $ \varphi :A[\big\{ \frac{M_i}{a_i}\big\}_{i \in K}] \longrightarrow A[\big\{ \frac{M_i}{a_i}\big\}_{i \in I}] . $ \end{center} Moreover 
\begin{enumerate}
\item  if $M_i\subset  (a_i)$ for all $i \in J$, then $\varphi$ is surjective, and
\item if $a_i$ is a non-zero-divisor in $A$ for all $i \in J$, then $\varphi$ is injective.
\end{enumerate}
\item Let $K \subset I$. Then we have a canonical isomorphism of $ A[\big\{ \frac{M_i}{a_i}\big\}_{i \in K}]$-algebras
\begin{center}
$A[\big\{ \frac{M_i}{a_i}\big\}_{i \in I}] = A[\big\{ \frac{M_i}{a_i}\big\}_{i \in K}] [\big\{ \frac{A[\big\{ \frac{M_i}{a_i}\big\}_{i \in K}]\frac{M_j}{1}}{\frac{a_j}{1}}\big\}_{j \in I \setminus K}], $
\end{center}
where ${A[\big\{ \frac{M_i}{a_i}\big\}_{i \in K}]\frac{M_j}{1}}$ is the ideal of $A[\big\{ \frac{M_i}{a_i}\big\}_{i \in K}]$ generated by $\frac{M_j}{1} \subset A[\big\{ \frac{M_i}{a_i}\big\}_{i \in K}]$. 
\item Assume that $a_i=a_j=:b$ for all $i,j \in I$, then 
\begin{center}$A[\big\{ \frac{M_i}{a_i}\big\}_{i \in I}]= A[\frac{\sum _{i\in I}M_i }{b}]$. \end{center}
\item Let $K \subset I$. Assume that for all $i \in I \setminus K$, there exists $k(i) $ in $K$ such that $a_{k(i)} \in L_i \subset L_{k(i)}$.
Then we have a canonical identification\begin{center}$ A[\big\{ \frac{M_i}{a_i}\big\}_{i \in I}] = A[\big\{ \frac{M_k}{a_k}\big\}_{k \in K}]_{\{ \frac{a_i}{a_{k(i)}}\}_{i \in I \setminus K}}$. ~~~~  \end{center}
\item Let $\nu \in \bbN _I$. We have $L^\nu A[\frac{M}{a}] = a^\nu A[\frac{M}{a}]$. 
\item  (Universal property) 
If $\chi : A \to B$ is a morphism of rings such that $\chi (a_i) $ is a non-zero-divisor and generates $\chi (L_i) B$ for all $i\in I$, then there exists a unique morphism $\chi '$ of $A$-algebras $A[\big\{ \frac{M_i}{a_i}\big\}_{i \in I}] \to B$. The morphism $\chi'$ sends $\frac{l}{a^\nu} $ ($\nu \in \bbN _I , l \in L^\nu$) to the unique element $b \in B $ such that $\chi ( a^\nu) b = \chi (l)$.

\item Assume that $I = \{1 , \ldots , k \}$ is finite. Then we have a canonical identification of $A$-algebras
 \begin{center}$A[\big\{ \frac{M_i}{a_i}\big\}_{i \in I}]= A[\frac{\sum _{i \in I} ( M_i \cdot \prod _{j \in I \setminus {\{i\}}} a_j )}{a_1 \cdots a_k}] .$\end{center}

\item Write $I= \colim _{J\subset I } J $ as a filtered colimit of sets. We have a canonical identification of $A$-algebras
\begin{center}
$A[\big\{ \frac{M_i}{a_i}\big\}_{i \in I}] = \colim _{J \subset I} 
A[\big\{ \frac{M_j}{a_i}\big\}_{i \in J}] $.\end{center}
\item Let $f:A \to B$ be an $A$-algebra. Put $N_i=f(M_i) B$ and $b_i = f(a_i)$ for $i \in I$. Then $B [\big\{ \frac{N_i}{b_i}\big\}_{i \in I}]$ is the quotient of $B \otimes _{A} A[\big\{ \frac{M_i}{a_i}\big\}_{i \in I}]$ by the ideal $T_b$ of elements annihilated by some element in $b^{\mathbb{N} _I }:= \{b^\nu | \nu \in \bbN _I \}$. If moreover $f:A\to B$ is flat, then $T_b=0$ and we have a canonical isomorphism 
\begin{center}  $B [\{ \frac{N_i}{b_i} \}_{i \in I }] = B \otimes _A A[\{\frac{M_i}{a_i}\}_{i\in I}].$\end{center}
\item Let $f:R\to A$ be a morphism of rings and let $\{r_i\}_{i\in I}\subset R$. Let $R'=R[ \big\{ \frac{R}{r_i} \big\}_{i\in I}]$; this is a localization  of $R$ and hence $R\to R'$ is a flat morphism. Let $A'=A\otimes_R R'$, $M_i'=M_i\otimes_RR'\subset A'$. Then, if $a_i:=f(r_i)$, the dilatation  $
 A[\big\{ \frac{M_i}{a_i}\big\}_{i \in I}]$ is isomorphic to the $A$-subalgebra of $A'=A \otimes _R R'$ generated by $\{M_i \otimes r_i^{-1} \}_{i\in I}$ and $A$.
\end{enumerate}
\xprop

We finish with an important description of dilatations in a particular case, cf. \cite[Proposition 5.5]{Ma23d} and  \cite[\href{https://stacks.math.columbia.edu/tag/0BIQ}{Tag 0BIQ}]{stacks-project}.

\prop \label{regupoly}Let $A$ be a ring. Let $a , g_1 , \ldots , g_n$ be a $H_1$-regular sequence in $A$ (cf. \cite[\href{https://stacks.math.columbia.edu/tag/062E}{Tag 062E}]{stacks-project} for $H_1$-regularity). Let $d_1 , \ldots , d_n$ be positive integers. The dilatation algebra identifies with a quotient of a polynomial algebra as follows 
\begin{center}
$A[\frac{(g_1)}{a^{d_1}}, \ldots , \frac{(g_n)}{a^{d_n}}]=A[x_1 , \ldots , x_n] / (g_1 -a^{d_1} x_1, \ldots , g_n -a^{d_n} x_n )$. 
\end{center}
\xprop

\section{Dilatations of schemes}  \label{333}

This section is an introduction to dilatations of schemes. The main references are \cite{MRR20} and \cite{Ma23d}.  Dilatations of schemes involve operations on closed subschemes that we recall at the beginning of this section.  We suggest readers to be familiar with §\ref{blow.up.definition.sec} before reading other subsections of Section \ref{333}. Note that \cite{Ma23d} deals with general algebraic spaces. In fact most results of Section \ref{333} extend to algebraic spaces.

\subsection{Definitions}\label{blow.up.definition.sec}
Let $X$ be a scheme. 
Let $Clo (X)$ be the set of closed subschemes of $X$. Recall that $Clo(X)$ corresponds to the set of quasi-coherent ideals of $\calO _X$. Let $IQCoh(\calO _X)$ denote the set of quasi-coherent ideals of $\calO _X$. It is clear that $(IQCoh(\calO _X), +, \times , 0 , \calO _X)$ is a semiring. So we obtain a semiring structure on $Clo (X)$, usually denoted by $(Clo(X), \cap , + , X, \emptyset)$. For clarity, we now recall directly operations on $Clo(X)$ and related facts. \begin{enumerate} 
\item Given two closed subschemes $Y_1,Y_2$ given by ideals $\calJ _1 , \calJ_2$, their sum $Y_1+Y_2$ is defined as the closed subscheme given by the ideal $\calJ_1 \calJ_2$. Moreover, if  $n \in \bbN$, we denote by $nY_1$ the $n$-th multiple of $Y_1$.  The set of locally principal closed subschemes of $X$ (cf. \cite[\href{https://stacks.math.columbia.edu/tag/01WR}{Tag 01WR}]{stacks-project}), denoted $Pri(X)$, forms a submonoid of $(Clo(X),+)$. Effective Cartier divisors of $X$ \cite[\href{https://stacks.math.columbia.edu/tag/01WR}{Tag 01WR}]{stacks-project}, denoted $Car(X)$, form a submonoid of $(Pri (X),+)$. Note that $Car(X)$ is a face of $Pri(X)$.
\item  We have an other monoid structure on $Clo (X)$ given by intersection. This law is denoted by $\cap$. The operation $\cap$ corresponds to the sum of quasi-coherent sheaves of ideals. The set $Clo(X)$ endowed with $ \cap, +$ is a semiring whose neutral element for $+$ is $\emptyset$ and whose neutral element for $\cap$ is $X$.
\item  Let $C \in Car(X) $, a non-zero-divisor (for $+$) in the semiring $Clo(X)$. Let $Y,Y' \in Clo (X)$. If $C+Y$ is a closed subscheme of $C+Y'$, then $Y $ is a closed subscheme of $Y'$. Moreover if $C+Y=C+Y'$, then $Y=Y'$.
\item  Let $f:X' \to X$ be a morphism of schemes. Then $f$ induces a morphism of semirings $Clo(f) : Clo (X) \to Clo (X'), Y \mapsto Y \times _X X'$. Moreover $Clo(f) $ restricted to $(Pri (X),+)$ factors through $(Pri(X'),+)$. This morphism of monoids is denoted $Pri (f)$. In general the image of the map $Pri(f) |_{Car(X)}$ is not included in $Car (X'). $ 
\item For $Y_1,Y_2 \in Clo(X)$, we write $Y_1 \subset Y_2$ if $Y_1 $ is a closed subscheme of $Y_2$. We obtain a poset $(Clo(X), \subset)$. Let $Y_1,Y_2,Y_3 \in Clo(X)$, if $Y_1 \subset Y_2 $ and $Y_1 \subset Y_3$ then $Y_1 \subset Y_2 \cap Y_3$. Let $Y_1,Y_2 \in Clo (X)$, then $(Y_1 \cap Y_2) \subset Y_1$ and $Y_1 \subset ( Y_1+Y_2)$.
\item
Finally, if $Y=\{ Y_e \} _{e \in E}$ is a subset of $Clo (X)$ and if $\nu \in \bbN ^{E}$, then we put $Y^\nu = \{\nu _e Y_e  \}_{e \in E}$ and if moreover $\nu \in \bbN _E$, then we put $\nu Y = \sum _{e \in E } \nu _e Y_e$.
\end{enumerate}
\defi[{\cite[§2.3]{MRR20} \cite{Ma23d}}]\label{defcatreg} Let $D= \{ D_i \} _{i \in I}$ be a subset of $Clo (X)$. Let $\Sch _X ^{D\text{-reg}}$ be the full subcategory of schemes $f:T \to X $ over $X$ such that    $T \times _X D_i$ is an effective Cartier divisor of $T$ for each $i$. 
\xdefi 

 \rema Assume that $I$ is finite.
The category $\Sch _X^{D\text{-}\reg}$ 
has products (cf. \cite{MRR20}, \cite{Ma23d}), but these are not in  general   the same as in the full category of $X$-schemes: indeed, given  $X',X''$   in $\Sch _X^{D\text{-}\reg}$, it is not assured that $X'\times_X X''$ still lies in  $\Sch _X^{D\text{-}\reg}$. Here are two examples. \begin{enumerate} \item  Let $A=\bbC[x,y]/(y^2-x^3)$ be the   ring of ``the cuspidal cubic'',  $A'=\bbC[t]$ the polynomial algebra in one variable, and $f:A\to A'$ the morphism sending $x$ to $t^2$ and $y$ to $t^3$. Let $B:=A'\otimes_AA'$ and note that $(f(x)\otimes 1)\cdot(t\otimes1-1\otimes t)=0$, so that $f(x)\otimes 1$ becomes a zero divisor in $B$, while $f(x)$ is not a zero divisor in $A'$. 
\item Let $A = \bbZ [X]$ and $A'= A'' =\bbZ $. Let $f' : A \to A', X \mapsto 0$ and $f'':A \to A'' , X \mapsto 2$, two morphisms of rings. Then $2=f'(2) = f''(2)$ is a non-zero-divisor in $\bbZ$, however $2$ is a zero-divisor in $A' \otimes _A A \cong \bbZ / 2 \bbZ$. 
\end{enumerate}
 \xrema

If $T'\to T$ is flat and $T\to X$ is an object in $\Sch _X ^{D\text{-reg}}$, so is the composition $T'\to T\to X$.
In particular, the category $\Sch _X ^{D\text{-reg}}$ can be equipped with the fpqc/fppf/\'etale/Zariski Grothendieck topology so that the notion of sheaves is well-defined.

\fact[\cite{Ma23d}] \label{fact3.3}  Let $D= \{ D_i \} _{i \in I}$ be a subset of $Clo (X)$. 
 \begin{enumerate} \item Let $f:T\to X$ be an object in $\Sch _X ^{D\text{-reg}}$. Then for any $\nu \in \bbN _I$, the scheme $T \times _X \nu D$ is an effective Cartier divisor of $T$, namely $\nu ( T \times _X D)$.
\item Assume that $\#I$ is finite, then $\Sch _X ^{D\text{-reg}}$ equals $\Sch _X^{\sum _{i \in I} D_i}$.
\end{enumerate}
\xfact

\defi[\cite{Ma23d}] \label{multi}
 A multi-center in $X$ is a set $\{ [Y_i , D_i ]\} _{i \in I} $ such that \begin{enumerate}
 \item  $Y_i $ and $ D_i$ belong to $ Clo (X) $,
 \item there exists an affine open covering $\{ U_{\gamma } \to X \}_{\gamma \in \Gamma} $ of $X$ such that $D_i |_{U_\gamma}$ is principal for all $i \in I $ and $\gamma \in \Gamma $ (in particular $D_i $ belongs to $Pri (X)$ for all $i$). \end{enumerate}
 In other words a multi-center  $\{ [Y_i , D_i ]\} _{i \in I} $ is a set of pairs of closed subschemes such that locally each $D_i$ is principal.
\xdefi

\rema Let $\{ Y_i , D_i \} _{i \in I} $ be such that   $Y_i \in Clo (X) $ and $D_i \in Pri (X)$ for any $i \in I$. Assume that $I$ is finite, then $\{ [Y_i , D_i ]\} _{i \in I} $ is a multi-center in $X$, i.e. the second condition in Definition \ref{multi} is satisfied.
\xrema 

We now fix a multi-center $\{ [Y_i , D_i ]\} _{i \in I} $ in $X$.
 Denote by
$\calM_i$, respectively $\calJ_i$,  the quasi-coherent sheaf
of ideals of $\calO_X$ defining   $Y_i$, respectively $D_i$. We put $Z_i = Y_i \cap D_i$ and $\calL_i = \calM _i + \calJ _i $ so that $Z_i$ is defined by $\calL _i$. We put $Y= \{Y_i \}_{i \in I}$, $D= \{D_i \}_{i \in I}$ and $Z= \{ Z_i \}_{i \in I}$.
We now introduce dilatations $\calO _X$-algebras by glueing. 

\depr \label{defiqcoalg} \begin{sloppypar}
The dilatation of $\calO_X$ with multi-center $\{[\calM _i , \calJ _i ] \}_{i \in I}$ is the quasi-coherent $\calO _X$-algebra  $\calO _X \Big[\Big\{ \frac{\calM_i}{\calJ_i}\Big\}_{i \in I} \Big]$ obtained by glueing as follows.  The quasi-coherent $\calO _X$-algebra $\calO _X \Big[\Big\{ \frac{\calM_i}{\calJ_i}\Big\}_{i \in I} \Big]$ is characterized by the fact that its restriction, on any open subscheme $ U \subset X$ such that $U$ is an affine scheme and each $D_i$ is principal on $U$ and generated by $a_{iU}$, is given by
\[ \Big( \calO _X \Big[\Big\{ \frac{\calM_i}{\calJ_i}\Big\}_{i \in I} \Big] \Big) _{\big|_{U_{}}}  =\Big( \Gamma (U , \calO _X ) \Big[\Big\{ \frac{\Gamma (U, \calM _i) }{a_{i U} }\Big\}_{i \in I} \Big] \Big) ^{\widetilde{~~~}} \]
where $(~)\widetilde{~~}$ denotes the associated sheaf of algebras on $U$. \end{sloppypar}
\xdepr

\defi[{\cite{Ma23d}}] \label{defimultidilaalgsp}
The \textit{dilatation} of $X$  with multi-center $\{[Y_i , D_i ]\} _{i \in I} $ is the $X$-affine scheme 
\[
\Bl_Y^DX\defined \Spec _X \big( \calO _X \Big[\Big\{ \frac{\calM_i}{\calJ_i}\Big\}_{i \in I} \Big] \big).
\]
The terminologies \textit{affine blowups} and \textit{affine modifications} are also used. \xdefi

\rema
In the mono-centered case, this definition is the one of \cite{MRR20}. If moreover $D$ is a Cartier divisor, then one has another equivalent definition (cf. Proposition \ref{blow.up.closed.in.cone.lemm}) that goes back to \cite{KZ99} and \cite{Du05}.
\xrema 

\rema \label{remaYZYZYZ}
We always have $\Bl _Y ^D X = \Bl _Z ^D X$.
\xrema

\nota \label{notabl}We will also use the notation  $ \Bl \big\{ _{Y_i}^{D_i } \big\}_{i\in I} X$ and $\Bl _{ \{Y_i\}_{i\in I}}^{\{D_i\}_{i\in I}} X $ to denote $\Bl _Y ^D X $. If $I= \{i\}$ is a singleton, then we also use the notation $\Bl _{Y_i} ^{D_i} X $. If $K \subset I$, then we sometimes use the notation $\Bl _{ \{Y_i\}_{i\in K}, \{Y_i\}_{i\in I \setminus K}}^{\{D_i\}_{i\in K},\{D_i\}_{i\in I \setminus K} } X $. If $I = \{1 , \ldots ,k\}$, then we use the notation $\Bl _{Y_1, \ldots , Y_k}^{D_1 , \ldots , D_k} X$. Etc. \xnota 

\defi \label{dilamapdee}We say that a morphism $f:X' \to X$ is a dilatation morphism if $f  $ is equal to $ \Bl \big\{ _{Y_i}^{D_i } \big\}_{i\in I} X \to X$ for some multi-center $\{[Y_i , D_i]\}_{i \in I}.$\xdefi

\fact[\cite{Du05, MRR20, Ma23d}] If $\#I$ is finite and $Y_i = \emptyset $ for all $i$, then the dilatation morphism $ \Bl \big\{ _{\emptyset}^{D_i } \big\}_{i\in I} X \to X$ is an open immersion. 
\xfact

\prop[{\cite{Ma23d}}] \label{defined} Put $f : \Bl_Y ^D X \to X$. Then the morphism of monoids $Clo (f) |_{Car(X)} $ factors through $Car( \Bl_Y ^D X )$. In other words, any effective Cartier divisor $C \subset X$ is defined for $f$ (cf. \cite[\href{https://stacks.math.columbia.edu/tag/01WV}{Tag 01WV}]{stacks-project}), i.e. the fiber product $C \times _X \Bl_{Y}^D X \subset \Bl_{Y}^D X $ is an effective cartier divisor.\xprop

\subsection{Exceptional divisors}

We proceed with the notation from \S\ref{blow.up.definition.sec}.

\prop[{\cite{MRR20, Ma23d}}] \label{blow.up.Cartier.lemm}
As closed subschemes of $\Bl_Y^DX$, one has, for all $\nu \in \mathbb{N}_I$,
\[
\Bl_Y^DX\x_X \nu Z=\Bl_Y^DX\x_X \nu D,
\]
which is an effective Cartier divisor on $\Bl_Y^DX$.
\xprop

\subsection{Relation to affine projecting cone}

We proceed with the notation from \S\ref{blow.up.definition.sec}. We assume that the set $\{D_i\} _{i \in I}$ is included in $Car(X)$. In this case, we can also realize $\Bl_Y^DX$ as a
closed subscheme of the multi-centered affine projecting cone associated to $X,Z$ and $D$. 

\defi 
The {\em affine projecting cone  $\calO_X$-algebra } with multi-center $\{[Z_i=V(\calL_i) , D_i = V (\calJ _i)]\}_{i \in I} $ is
\[
\Cone_{\calL}^{\calJ} \calO_ X\defined\bigoplus_{\nu \in \mathbb{N} _I} \calL^\nu \otimes\calJ^{-\nu}.
\]
The {\em affine projecting cone }  of $X$ with multi-center $\{[Z_i , D_i]\}_{i \in I} $ is \[ \Cone _Z^D X \defined \Spec \big( \Cone_{\calL}^{\calJ} \calO_ X \big).\]
\xdefi

\prop[{\cite{KZ99, Du05, MRR20, Ma23d}}] \label{blow.up.closed.in.cone.lemm} The dilatation $\Bl_Z^DX$ is the closed
subscheme of the affine projecting cone $\Cone_Z^DX$ defined by the equations $\{\varrho_i-1\}_{i\in I} $,
where  for all $i \in I$, $\varrho _i\in \Cone_{\calL}^{\calJ} \calO_ X$ is the image of $1 \in \calO _X$ under the map
\[\calO_X \cong\calJ _i\otimes\calJ _i ^{-1}\subset \calL _i\otimes\calJ _i^{-1} \subset \Cone_{\calL}^{\calJ} \calO_ X. \]
\xprop

\subsection{Description of the exceptional divisor in the mono-centered case}
We proceed with the notation from \S\ref{blow.up.definition.sec}. We assume $I= \{i\}$ is a singleton and we omit the subscripts $i$ in notation.
We saw in Proposition~\ref{blow.up.Cartier.lemm} that the preimage of the center
$\Bl_Z^DX\x_XZ=\Bl_Z^DX\x_XD$ is an effective Cartier divisor in $\Bl_Z^DX$.
In order to describe it following \cite{MRR20}, as before we denote by $\calL$ and $\calJ$
the sheaves of ideals of $Z$ and $D$ in $\calO_X$. Also we let
$\calC_{Z/D}=\calL/(\calL^2+\calJ)$ and $\calN_{Z/D}=\calC_{Z/D}^\vee$
be the conormal and normal sheaves of $Z$ in~$D$.

\prop[{\cite[Proposition 2.9]{MRR20}}] \label{exceptional divisor} 
Assume that $D\subset X$ is an effective Cartier divisor and that $Z\subset D$ is a regular immersion. Write $\calJ_Z:=\calJ|_Z$.
\begin{enumerate}
\item[(1)] The exceptional divisor $\Bl_Z^DX\x_XZ\to Z$ is an
affine bundle (i.e. a torsor under a vector bundle), Zariski
locally over $Z$ isomorphic
to
$\bbV(\calC_{Z/D}\otimes \calJ_Z^{-1})\to Z$.
\item[(2)] If $H^1(Z,\calN_{Z/D}\otimes \calJ_Z)=0$ (for example if $Z$ is affine),
then $\Bl_Z^DX\x_XZ\to Z$ is {\em globally} isomorphic to
$\bbV(\calC_{Z/D}\otimes \calJ_Z^{-1})\to Z$.
\item[(3)] If $Z$ is a transversal intersection in the sense that there
is a cartesian square of closed subschemes whose vertical maps are
regular immersions
\[
\begin{tikzcd}[column sep=30]
W \ar[r,hook] \arrow[rd, "\square",phantom] & X \\
Z \ar[r,hook] \ar[u,hook] & D \ar[u,hook]
\end{tikzcd}
\]
then $\Bl_Z^DX\x_XZ\to Z$ is {\em globally and canonically} isomorphic
to $\bbV(\calC_{Z/D}\otimes \calJ_Z^{-1})\to Z$.
\end{enumerate}
\xprop

\subsection{Universal property in the absolute setting}\label{blow.up.univ.ppty.section}
We proceed with the notation from \S\ref{blow.up.definition.sec}.
As $\Bl_Y^DX\to X$ defines an object in $\Sch_X^{D\text{-}\reg}$ by Proposition \ref{blow.up.Cartier.lemm}, the contravariant functor
\begin{equation}\label{blow.up.represent.eq}
\Sch_X^{D \text{-}\reg}\to \Sets, \;\;\;(T\to X)\mapsto \Hom_{X\text{-Schemes}}\big(T,\Bl_Y^DX\big)
\end{equation}
together with $\id_{\Bl_Y^DX}$ determines $\Bl_Y^DX\to X$ uniquely up to unique isomorphism.
The next proposition gives the universal property of dilatations in the absolute setting. 

\prop[{\cite{MRR20, Ma23d}}] \label{blow.up.rep.prop}
The dilatation $\Bl_Y^DX\to X$ represents the contravariant functor $\Sch_X^{D\text{-}\reg}\to \Sets$ given by
\begin{equation}\label{blow.up.iso.eq}
(f\co T\to X) \;\longmapsto\; \begin{cases}\{*\}, \; \text{if $f|_{T\x_XD_i}$ factors through $Y_i\subset X$ for $i \in I$;}\\ \varnothing,\;\text{else.}\end{cases}
\end{equation}
\xprop
Note that here $\{*\}$ denotes a singleton. 
\subsection{Universal property in the relative setting} 

In the relative setting, Proposition \ref{blow.up.rep.prop} implies the following statement. Let $S$ be a scheme and let $X$ be a scheme over $S$. Let $C=\{C_i\}_{i\in I}$ be closed subschemes of $S$ such that, locally, each $C_i$ is principal. Put $D= \{ C_i \times _S X\}_{i \in I}$. Let $Y=\{Y_i\}_{i\in I}$ be closed $S$-subspaces of $X$. We put $\Bl_{Y}^{C^{}} X:= \Bl_{Y}^{D^{}} X$. 
  
\fact[\cite{Ma23d}] \label{factintrro} $\Bl _Y ^{C}X$ represents the functor from $Sch ^{C\text{-reg}}_S$ to $Set$ given by
\[( f: T \to S ) \mapsto \{x \in \Hom _S (T,X) |~ {x_|}_{C_i^{}} : T \times _S C_i^{} \to X \times _S C_i^{} \text{ factors through } Y_i \times _S C_i \}. \]
\xfact 

\rema[\cite{Ma23d}] \label{remainj} Fact \ref{factintrro} implies that for any $ T \in Sch ^{C\text{-reg}}_S$ (e.g. $T=S$ if each $C_i$ is a Cartier divisor in $S$) we have a canonical inclusion on $T$-points $ \Bl_Y ^{C^{}} X (T ) \subset X(T)$. But in general $\Bl_Y ^{C^{}} X \to X$ is not a monomorphism in the full category of $S$-schemes.
\xrema

\subsection{Dilatations or affine blowups }

We proceed with the notation from \S\ref{blow.up.definition.sec}. We assume $I= \{i\}$ is a singleton and we omit the subscripts $i$ in notation.

\prop[ \cite{MRR20}] \label{blow.up.open.lemm}
The dilatation $\Bl_Z^DX$ is the open subscheme of the blowup $\Bl_ZX=\Proj(\Bl_\calI\calO_X)$ defined by the complement of $V_+(\overline\calJ)$ where $\overline\calJ$
is the sheaf of ideals generated by $\calJ\subset\calI$ and $\calI$
is the degree 1 part of $\Bl_\calI\calO_X$.
\xprop

For this reason dilatations are also called affine blowups.
 A similar description holds in the multi-centered case, cf \cite{Ma23d}.

\subsection{Combinatorial and arithmetic  relations}
We proceed with the notation from \S\ref{blow.up.definition.sec}.

\prop[{\cite{Ma23d}}] \label{multietape} Let $J $ be a subset of $I$ and put $K = I \setminus J$. Then
\[ \Bl \big\{_{Y_i} ^{D_i}\big\}_{i \in I} X = \Bl \Big\{_{Y_k \times _X  \Bl \big\{_{Y_i}^{D_i}\big\}_{i \in J} X } ^{
D_k \times _X  \Bl \big\{_{Y_i}^{D_i}\big\}_{i \in J} X  }\Big\} _{k \in K } \Bl \big\{_{Y_i}^{D_i}\big\}_{i \in J} X  .\] In particular, there is a unique $X$-morphism \[\Bl \big\{_{Y_i}^{D_i}\big\}_{i \in I} X  \to \Bl \big\{_{Y_i}^{D_i}\big\}_{i \in J} X . \]
\xprop 

\prop[{\cite{ Ma23d}}]
 Let $K \subset I$ be such that \begin{enumerate} \item  $I \setminus K $ is finite, \item for all $i \in I \setminus K $, there exists $k(i) \in K $ such that $Z_{k(i)} \subset Z_i \subset D_{k(i)}$. \end{enumerate} Then the canonical morphism given by Proposition \ref{multietape}
 \[ \Bl \big\{_{Y_i}^{D_i}\big\}_{i \in I} X  \to \Bl \big\{_{Y_i}^{D_i}\big\}_{i \in K} X  \] is an open immersion.
\xprop

\prop[{\cite{ Ma23d}}] \label{colimitspacegeneral} Write $I = \colim _{J \subset I} J $ as a filtered colimit of sets where transition maps are given by inclusions of subsets. We have a canonical identification 
\begin{center}$\Bl \big\{^{D_i}_{Y_i}  \big\}_{i \in I}  X = \lim_{J \subset I} \Bl \big\{^{D_i}_{Y_i}  \big\}_{i \in J} X $.\end{center} 
\xprop 

\prop[{\cite{ Ma23d}}] \label{multisingle} 
Assume that $\#I=k$ is finite. We fix an arbitrary bijection $I = \{ 1 ,\ldots , k\}$.  We have a canonical isomorphism of $X$-schemes
\[ \Bl _{\{Y_i\}_{i \in I}}^{\{D_i\}_{i \in I} }X  \cong \Bl ^{(\Bl\cdots ) \times _X D_k}_{( \Bl \cdots ) \times _X Y_k }\Biggl(\cdots \Bl_{(\Bl \cdots ) \times _X Y_3}^{(\Bl\cdots ) \times _X D_3}\biggl( \Bl _{(\Bl_{Y_1}^{D_1}X) \times _{X} Y_2} ^{(\Bl_{Y_1}^{D_1}X) \times _{X} D_2} \bigl(\Bl _{Y_1}^{D_1} X \bigl) \biggl) \Biggl). \]
\xprop

\prop[{\cite{ Ma23d}}] \label{formulamultimono} 
Assume that $\#I=k$ is finite. We fix an arbitrary bijection $I = \{ 1 ,\ldots , k\}$. We have a canonical isomorphism of $X$-schemes
 \[\Bl _{\{Y_i\}_{i \in I}}^{\{D_i\}_{i \in I} }X  \cong \Bl _{\bigcap _{i \in I }( Y_i + D_1+ \ldots + D_{i-1} + D_{i+1} + \ldots + D_k)} ^{D_1+ \ldots + D_k} X.\]
\xprop

\subsection{Functoriality} \label{blow.up.base.functoriality.sec} We proceed with the notation from \S\ref{blow.up.definition.sec}.
Let $X'$ and $\{[Y'_i , D'_i]\} _{i\in I}  $ be another datum as in \S\ref{blow.up.definition.sec}. As usual, put $Z'_i= Y_i' \cap D'_i$.
A morphism $f:X'\to X$ such that, for all $i \in I$, its restriction to $D'_i$ (resp.~${Z'_i}$) factors through $D_i$ (resp.~$Z_i$), and such that $f^{-1} (D_i) = D_i'$, induces a unique morphism  $\Bl_{Y'}^{D'}X'\to \Bl_Y^DX$ such that the following diagram of schemes commutes
\[
\begin{tikzpicture}[baseline=(current  bounding  box.center)]
\matrix(a)[matrix of math nodes, 
row sep=1.5em, column sep=2em, 
text height=1.5ex, text depth=0.45ex] 
{ 
\Bl_{Y'}^{D'}X'& \Bl_Y^DX \\ 
X'& X.\\}; 
\path[->](a-1-1) edge node[above] {} (a-1-2);
\path[->](a-2-1) edge node[above] {} (a-2-2);
\path[->](a-1-1) edge node[right] {} (a-2-1);
\path[->](a-1-2) edge node[right] {} (a-2-2);
\end{tikzpicture}
\]

\subsection{Base change} \label{blow.up.base.change.sec}
We proceed with the notation from \S\ref{blow.up.definition.sec}.
Let $X'\to X$ be a map of schemes, and denote by $Y'_i, Z'_i, D'_i \subset X'$ the preimages of $Y_i,Z_i,D_i \subset X$ respectively.
Then $D'_i \subset X'$ is locally principal for any $i$ so that the dilatation $\Bl_{Y'}^{D'}X'\to X'$ is well-defined. 
By \S\ref{blow.up.base.functoriality.sec} there is a canonical morphism of $X'$-schemes
\begin{equation}\label{blow.up.base.change.eq}
\Bl_{Y'}^{D'}X'\;\longto\; \Bl_Y^DX\x_{X}X'.
\end{equation}

\lemm[{\cite{MRR20, Ma23d}}]\label{blow.up.base.change.lemm}
If $\Bl_Y^DX\x_{X}X'\to X'$ is an object of $\Sch_{X'}^{D\text{-}\reg}$, then \eqref{blow.up.base.change.eq} is an isomorphism.
\xlemm

\coro[{\cite{MRR20, Ma23d}}] \label{blow.up.base.change.cor}
If a morphism $X'\to X$ is flat and satisfies a property $\calP$ which is
stable under base change, then $\Bl_{Y'}^{D'}X'\to\Bl_{Y}^{D}X$ is flat
and satisfies $\calP$.
\xcoro

\subsection{Iterated multi-centered dilatations} \label{sec4}
We proceed with the notation from § \ref{blow.up.definition.sec}. Let $\nu , \theta \in \bbN ^I$ such that $\theta \leq \nu$, i.e. $\theta _i \leq \nu _i $ for all $i\in I$.

\prop[{\cite{Ma23d}}] \label{prop1iter}
There is a unique $X$-morphism 
\[ \varphi _{\nu , \theta}: \Bl _{Y}^{D^{\nu}} X \longrightarrow \Bl _{Y}^{D^{\theta}} X  . \]
\xprop

Assume now moreover that $\nu , \theta \in \bbN _I \subset \bbN ^I.$
We will prove that, under some assumptions, $\varphi _{\nu , \theta}$ is a dilatation morphism with explicit descriptions. We need the following observation.

\prop[{\cite{MRR20, Ma23d}}] \label{liftclo}  Assume that we have a commutative diagram of schemes \[
\begin{tikzcd}B \ar[rr, "f'"] \ar[dr,"f"]  & & \Bl_Y^D X \ar[dl, ] \\  &X \end{tikzcd}\] where the right-hand side morphism is the dilatation map.
 Assume that $f$ is a closed immersion. Then $f'$ is a closed immersion.
\xprop

We now assume that $Z_i \subset Y_i$ is a Cartier divisor inclusion for all $i \in I$.
 Let $\mathcal{D}_i $ be the canonical diagram of closed immersions \[\begin{tikzcd} Y_i \ar[r] \arrow[rd, "\square",phantom] &  \Bl _{Y_i}^{\nu_i D_i } X   \\ Z_i \ar[u] \ar[r] & \ar[u] D_i \end{tikzcd} \] obtained by Proposition \ref{liftclo}.
Let $f_i$ be the canonical morphism (e.g. cf.  \ref{multietape} or \ref{prop1iter})
\[ \Bl _{Y}^{D^{\nu}} X \to \Bl _{Y_i}^{{\nu_i}D_i } X  .\] 
 We denote by $Y_i \times _{\Bl _{Y_i}^{{\nu_i}D_i } X}\Bl _{Y}^{D^{\nu}}X$ the fiber product obtained via the arrows given by $f_i$ and $\mathcal{D}_i$. We use similarly the notation $D_i \times _{\Bl _{Y_i}^{{\nu_i}D_i } X}\Bl _{Y}^{D^{\nu}}X$.

\prop[{\cite[§7.2]{PY06} \cite{ MRR20, Ma23d}}] \label{nutheta} Recall that $\theta \leq \nu $. Put $\gamma = \nu - \theta$. Put $K = \{ i \in I | \gamma _i >0 \}.$ We have an identification
 \[\Bl _{Y}^{D^{\nu}} X = \Bl ^{\{{{\gamma _i}D_i \times _{\Bl _{Y_i}^{\theta _i D_i } X}\Bl _{Y}^{D^{\theta}} X\}_{i \in K}}} _{\{Y_i   \times _{\Bl _{Y_i}^{\theta _i D_i } X}\Bl _{Y}^{D^{\theta}} X \}_{i \in K}} \Bl _{Y}^{D^{\theta}} X .\]
  In particular the unique $X$-morphism 
\[ \varphi _{\nu , \theta}: \Bl _{Y}^{D^{\nu}} X \longrightarrow \Bl _{Y}^{D^{\theta}} X   \] of Proposition \ref{prop1iter} is a dilatation map.
\xprop

It is now natural to introduce the following terminology.

\defi 
 For any $\nu \in \bbN ^k, $ let us consider
\[ \Bl _{Y}^{D^{\nu}} X= \Bl \big\{_{Y_i}^{\nu _i D_i}\big\}_{i \in I} X\] and call it the $\nu$-th iterated dilatation of $X$ with multi-center $\{Y_i,D_i\}_{i\in I}$.
\xdefi

\subsection{Some flatness and smoothness results}
\label{sec6}

We proceed with the notation from \S\ref{blow.up.definition.sec} and assume $I= \{i\}$ is a singleton and we omit the subscripts $i$ in notation.
We assume further that there exists a scheme $S$ under $X$ together with a locally principal closed subscheme $S_0\subset S$ fitting into a commutative diagram of schemes
\begin{equation}\label{blow.up.smoothness.diag}
\begin{tikzpicture}[baseline=(current  bounding  box.center)]
\matrix(a)[matrix of math nodes, 
row sep=1.5em, column sep=2em, 
text height=1.5ex, text depth=0.45ex] 
{ 
Z& D& X \\ 
& S_0 & S,\\}; 
\path[->](a-1-1) edge node[above] {} (a-1-2);
\path[->](a-1-2) edge node[above] {} (a-1-3);
\path[->](a-2-2) edge node[above] {} (a-2-3);
\path[->](a-1-1) edge node[right] {} (a-2-2);
\path[->](a-1-2) edge node[right] {} (a-2-2);
\path[->](a-1-3) edge node[right] {} (a-2-3);
\end{tikzpicture}
\end{equation}
where the square is cartesian, that is $D\to X_0:=X\times_SS_0$
is an isomorphism.

\prop[\cite{MRR20}] \label{blow.up.smoothness.prop}
Assume that $S_0$ is an effective Cartier divisor on $S$.
\begin{enumerate}
\item[(1)] If $Z\subset D$ is regular, then
$\Bl_Z^DX\to X$ is of finite presentation.
\item[(2)] If $Z\subset D$ is regular, the fibers of
$\Bl_Z^DX\times_SS_0\to S_0$ are connected \textup{(}resp.
irreducible, geometrically connected, geometrically irreducible\textup{)}
if and only if the fibers of $Z\to S_0$ are.
\item[(3)] If $X\to S$ is flat and if moreover one of the following holds:
\begin{itemize}
\item[\rm (i)] $Z\subset D$ is regular, $Z\to S_0$ is flat
and $S,X$ are locally noetherian,
\item[\rm (ii)] $Z\subset D$ is regular, $Z\to S_0$ is flat and
$X\to S$ is locally of finite presentation,
\item[\rm (iii)] the local rings of $S$ are valuation rings,
\end{itemize}
then $\Bl_Z^DX\to S$ is flat.
\item[(4)] If both $X\to S$, $Z\to S_0$ are smooth, then $\Bl_Z^DX\to S$ is smooth.
\end{enumerate}
\xprop

\rema Complementary smoothness and flatness results for multi-centered dilatations can be found in \cite[§6]{Ma23d}.
\xrema

\subsection{Remarks}

Dilatations commute with algebraic attractors \cite[Proposition 13.1]{Ma23a}.

\section{Dilatations of group schemes or Néron blowups} \label{neron} 
One of the key properties allowed by dilatations is that it preserves the structure of group schemes in many cases. Dilatations of group schemes are also called N\'eron blowups and we also often use this terminology.

\subsection{Definitions of multi-centered Néron blowups} \label{subnermulti}
Let $S$ be a scheme and $G\to S$ a group scheme.
Let $C=\{C_i\}_{i\in I} $ be a set of locally principal closed subschemes of $S$.  Put  $ G|_{C_i} = G \times _S C_i$ and ${G}|_C= \{ G|_{C_i} \}_{i \in I}$.
Let $H_i\subset G|_{C_i}$ be a closed subgroup scheme over $C_i$ for all $i \in I$ and let $H= \{H_i \}$. Then $\{[H_i,{G}|_{C_i}]\}_{i \in I}$ is a multi-center which is denoted as $[H,G|_C]$. The multi-centered dilatation 
\[
\calG:=\Bl_H^{{G}|_C}G\longrightarrow G\] is called the {\it N\'eron blowup} of $G$ with multi-center $[H,G|_C]$. We also use the notation $\Bl_H^{C}G$ to denote $\calG$.  
In the case $I$ has a single element, we shall refer to $\Bl_H^{C}G$ as mono-centred N\'eron blowups.
By Proposition \ref{blow.up.Cartier.lemm} the structural morphism $\calG\to S$
defines an object in $\Sch_S^{C\text{-}\reg}$.

\prop [{\cite{MRR20, Ma23d}}] \label{Neron.blow.lemm} 
Let $\calG\to S$ be the above multi-centered N\'eron blowup.
\begin{enumerate}
\item[(1)]   The $S$-scheme $\calG$ represents the contravariant functor
$\Sch_S^{C\text{-}\reg}\to  \Sets$ 
\[
T \,\,\longmapsto\,\,\left\{T\to G\,\,:\,\,\begin{array}{c}\text{$T|_{C_i}\to G|_{C_i}$ factors through} \\ \text{$H_i\subset G|_{C_i}$ for all $i$}\end{array}\right\}.
\]

\item[(2)] Let $T \to S$ be an object in $\Sch_S^{C\text{-}\reg}$, then as subsets of $G(T)$ \[ \calG (T) = \bigcap_{i \in I} \big( \Bl_{H_i}^{C_i}G \big)(T).\]

\item[(3)] The map $\calG\to G$ is affine. 
Its restriction over $C_i$ factors as $\calG|_{C_i}\to H_i \subset  G|_{C_i}$ for all $i$.

\item[(4)] If the N\'eron blowup $\calG\to S$ is flat, then it is equipped with the structure of a group scheme such that $\calG\to G$ is a morphism of $S$-group schemes.
\end{enumerate}
\xprop 

\rema We saw that in favorable cases, dilatations preserve group scheme structures. In fact dilatations preserve similarly monoid scheme structures and Lie algebra scheme structures, or more generally structures defined by products, cf. \cite[§7]{Ma23d} for details.
\xrema 

\rema Dilatations commute with the formation of Lie algebra schemes in a natural sense \[ \bbL ie (\Bl_H^{G|_C}G ) \cong \Bl \big\{{}^{\bbL ie (G) \times _S C_i}_{\bbL ie ( H_i )} \big\}_{i \in I} \bbL ie (G) \] cf. \cite[§7]{Ma23d} for precise flatness assumptions.
\xrema 
\prop[\cite{Ma23d}]
Assume that $C_i$ is a Cartier divisor in $S$ for all $i$. Assume that $G \to S$ is a flat group scheme. Let $\eta : K\to G$ be a morphism of group schemes over $S$ such that $K \to S$ is flat. Assume that $H_i  \subset G $ is a closed subgroup scheme over $S$ such that $H_i \to S$ is flat for all $i$. Assume that $\Bl_{H}^{C} G \to S$ is flat (and in particular a group scheme). Assume that, for all $i$,
 $K_{C_i} $ commutes with ${H_i}_{C_i}$ in the sense that the morphism $K_{C_i} \times _{C_i} {H_i}_{C_i} \to G_{C_i} $, $(k,h) \mapsto \eta(k)h\eta(k)^{-1} $ equals the composition morphism $K_{C_i} \times _{C_i} {H_i}_{C_i} \to {H_i}_{C_i} \subset G_{C_i}$, $(k,h) \mapsto h$. Then $K$ normalizes $\Bl _H^C G$, more precisely the solid composition map \[
 \begin{tikzcd} K \times _S \Bl _{H}^C G \ar[rr, "Id \times \Bl "] \ar[rrd, dashrightarrow] & & K \times _S G \ar[rrr, "k {,} g \mapsto \eta(k)g\eta(k)^{-1} "] & & & G\\ &  &\Bl _H ^C G  \ar[rrru, "\Bl"]&  & \end{tikzcd} \]
 factors uniquely through $\Bl _H^C G $.
 \xprop

 \subsection{Mono-centered Néron blowups}
 We proceed with the notation from §\ref{subnermulti}. We now deal with the mono-centered case, i.e. $k=1$. We put $S_0= C_1$ and $H=H_1$.

 \theo[\cite{WW80, MRR20}] \label{theo:dilatations of subgroups}
 Assume that 
$G\to S$ is flat, locally finitely presented and $H\to S_0$ is flat,
regularly immersed in $G_0 $. Let $\calG\to G$ be the dilatation
$\Bl_H^{S_0} G$ with exceptional divisor $\calG_0:=\calG\x_SS_0$.
Let $\calJ$ be the ideal sheaf of $G_0$ in $G$ and
$\calJ_H:=\calJ|_H$. Let $V$ be the restriction of the normal
bundle $\bbV(\calC_{H/G_0}\otimes \calJ_H^{-1})\to H$ along the
unit section $e_0\co S_0\to H$.
\begin{enumerate}
\item[(1)] Locally over $S_0$, there is an exact sequence of
$S_0$-group schemes $1 \to V \to \calG_0 \to H\to 1$.
\item[(2)] Assume that we have a lifting of $H$ to a flat $S$-subgroup
scheme of~$G$. Then there is globally an exact, canonically
split sequence $1 \to V \to \calG_0 \to H\to 1$.
\item[(3)] If $G\to S$ is smooth, separated and $\calG\to G$ is the
dilatation of the unit section of $G$, there is a canonical
isomorphism of smooth $S_0$-group schemes
$\calG_0 \isomto \Lie(G_0/S_0)\otimes\Norm_{S_0/S}^{-1}$
where $\Norm_{S_0/S}$ is the normal bundle of $S_0$ in $S$.
\end{enumerate}
\xtheo

\rema
In the situation of Theorem \ref{theo:dilatations of subgroups} (2),
the group $H$ acts by conjugation on
$V=\bbV(e_0^*\calC_{H/G_0}\otimes \calJ_{S_0}^{-1})$.
It is expected that this additive action is linear, and is
in fact none other than the ``adjoint'' representation of $H$ on
its normal bundle as in \cite[Exp.~I, Prop.~6.8.6]{DG70}.
When the base scheme is the spectrum of a discrete
valuation ring this is proved in \cite[Prop.~2.7]{duong-hai-dos_santos18}. 
\xrema
 
Assume now that $j\co S_0\hookto S$ is an effective Cartier divisor,
that $G\to S$ is a flat, locally finitely presented group scheme
and that $H\subset G_0:=G\times_SS_0$ is a flat, locally finitely
presented closed
$S_0$-subgroup scheme. In this context, there is another
viewpoint on the dilatation $\calG$ of $G$ in $H$, namely as the
kernel of a certain map of syntomic sheaves.

To explain this, let $f\co G_0\to G_0/H$ be the morphism to
the fppf quotient sheaf, which by Artin's theorem
(\cite[Cor.~6.3]{Ar74} and
\cite[\href{https://stacks.math.columbia.edu/tag/04S6}{04S6}]{stacks-project})
is representable by an algebraic
space. By the structure theorem for algebraic group schemes
(see \cite[Exp.~VII$_{\on{B}}$, Cor.~5.5.1]{DG70})
the morphisms $G\to S$ and $H\to S_0$ are syntomic. Since
$f\co G_0\to G_0/H$ makes $G_0$ an $H$-torsor, it follows
that $f$ is syntomic as well.

\prop[{\cite[Lemma 3.8]{MRR20}}] \label{lemma:short exact sequence}
Let $S_{\syn}$ be the small syntomic site of $S$. Let
$\eta\co G\to j_*j^*G$ be the adjunction map in the category
of sheaves on $S_{\syn}$ and consider the composition
$v=(j_*f)\circ \eta$:
\[
\begin{tikzcd}
G \ar[r,"\eta"] & j_*j^*G=j_*G_0 \ar[r,"j_*f"] & j_*(G_0/H).
\end{tikzcd}
\]
Then the dilatation $\calG\to G$ is the kernel of $v$.
More precisely, we have an exact sequence of sheaves of
pointed sets in $S_{\syn}$:
\[
\begin{tikzcd}[column sep=15]
1 \ar[r] & \calG \ar[r] & G \ar[r,"v"] & j_*(G_0/H) \ar[r] & 1.
\end{tikzcd}
\]
If $G\to S$ and $H\to S_0$ are smooth, then the sequence is
exact as a sequence of sheaves on the small \'etale site of $S$.
\xprop

As a corollary, one has the useful and typical result as follows.
\coro[\cite{MRR20}] \label{lemma:congruence}
Let $\calO$ be a ring and $\pi\subset \calO$ an invertible ideal
such that $(\calO,\pi)$ is a henselian pair. Let $G$ be a smooth,
separated $\calO$-group scheme and $\calG\to G$ the
dilatation of the trivial subgroup over $\calO/\pi$. If either
$\calO$ is local or~$G$ is affine, then the exact sequence of
Proposition ~\ref{lemma:short exact sequence}
induces an exact sequence of groups:
\[
1\longto \calG(\calO) \longto G(\calO) \longto G(\calO/\pi) \longto 1.
\]
\xcoro

 \part{Some applications }

\section{Models of group schemes, representation categories and Tannakian groups}\label{Tannak}

In several  mathematical theories, one finds the structure of a category with a {\it tensor product}, and one of the main goals of categorical Tannakian theory is to realize the latter categories  as representations of group schemes. If we deal with categories over a {\it field},  and this is a somewhat  well-known area  with \cite{deligne-milne82} being a fundamental reference, then dilatations have not played a role. In the case we deal with categories which are linear over a {\it discrete valuation} ring, a {\it Dedekind domain}, or  more complicated rings, the outputs are much  scarcer and the main reference is the beautiful, yet  arid, monograph \cite{saavedra72}. But in this situation,  dilatations have played a role. 

Following    \cite{dos_santos09}, N. D. Duong  and P. H. Hai \cite{duong-hai18} went into technical aspects of \cite{saavedra72} and produced  a more contemporaneous text to study tensor categories over Dedekind domain. This prompted further study        \cite{duong-hai-dos_santos18,hai-dos_santos21}; in these papers, the authors begin to look at N\'eron blowups (in the sense of Section \ref{neron}) and the resulting categories systematically. It is also useful to mention here the paper  \cite{csima-kottwitz10}, where the idea of looking at representation categories of  N\'eron blowups already appears.

  In this section we fix a discrete valuation ring $R$ with uniformizer $\pi$, residue field $k$ and fraction field $K$. We put $S= \Spec (R)$ and $S_i = \Spec ( R/(\pi ^{i+1}))$ for $i \in \bbN$.

\subsection{Group schemes from categories}\label{Tannak1}
 
Let $\mathcal T$ be a neutral Tannakian category over $R$ in the sense of \cite[Definition 1.2.5]{duong-hai18}. (We observe that in loc.cit., the authors require the {\it giving} of a faithful and exact tensor functor $\omega:\mathcal T\to R\text{-}\mathbf{mod}$ as part of the definition, but we shall only require its {\it existence}. The data of $\mathcal T$ {\it and} the functor $\omega:\mathcal T\to R\text{-}\mathbf{mod}$ should define a   neutral{\it ized} Tannakian category.)
The reader having encountered only (neutral) Tannakian categories over fields \cite[Section 2]{deligne-milne82}  should note that the distinctive property of $\mathcal T$ is a weakening of the existence of  ``duals'' \cite[Definition 1.7]{deligne-milne82}. This is    to be replaced by the  property that every object is a quotient of an ``object having a dual.''  That this property holds for representation categories of group schemes is   \cite[Proposition 3]{serre68}. (For a higher dimensional bases, see \cite[Lema 2.5]{thomason87}.)
But we face a  non-trivial requirement: for example, $\mathrm{Rep}_{W(\overline{\mathbb F}_p)}(\overline{\mathbb F}_p)$ fails to satisfy it \cite[Example 4.7]{hai-dos_santos21}. 

Once this definition of neutral Tannakian category is given, the main theorem of \cite[Theorem 2.11]{deligne-milne82} has his analogue in the present context: If $\omega:\mathcal T\to R\text{-}\mathbf{mod}$ is a faithful, $R$--linear and exact tensor functor (which exists by definition), then there exists an affine and {\it flat} group scheme $\Pi_{\mathcal T}$ over $R$ and an equivalence 
\[
\overline\omega: \mathcal T\longrightarrow\mathrm{Rep}_R(\Pi_{\mathcal T})
\]
such that composing $\overline\omega$ with the forgetful functor $\mathrm{Rep}_R(\Pi_{\mathcal T})\to R\text{-}\mathbf{mod}$ renders us $\omega$ back. See \cite[II.4.1.1]{saavedra72} and \cite[Theorem 1.2.2]{duong-hai18}.
It should be noted that the construction of $\Pi_{\mathcal T}$ {\it does depend on the functor $\omega$ chosen}. But we shall omit reference to $\omega$   in order to lighten notations.

Let us present some examples of categories to which the theory can be applied.

\begin{ex}Let $\Gamma$ be an abstract group and suppose that $R=k\llbracket \pi\rrbracket$. Then, the category of $R[\Gamma]$-modules which are of finite type over $R$ together with the forgetful functor produces  a neutral Tannakian category \cite[4.1]{hai-dos_santos21}. 
\end{ex}

\begin{ex}\label{05.06.2023--3jp}Let $X$ be a smooth and connected scheme over $R$,  $\mathcal{D}_{X/R}$ the ring of differential operators \cite[IV.16.8]{Gr60-67},  and  $\mathcal T^+$  the category of $\mathcal{D}_{X/R}$-modules which, as $\mathcal O_X$--modules, are coherent. We endow $\mathcal T^+$ with the usual tensor product of $\mathcal D_X$-modules  (in this level of generality, the tensor product is best grasped with the use of {\it stratifications}, cf. \cite[II.1.5.3, p. 105]{berthelot74}). 
 Using the fibre-by-fibre flatness criterion and \cite[proof of 2.16]{berthelot-ogus78}, one proves that an object $E\in \mathcal T^+$ is locally free if and only if it is $R$-flat.

Let now $\mathcal T$ be the full subcategory of $\mathcal T^+$    having
\[\left\{M\in\mathcal T^+\,:\,\begin{array}{c}\text{There exists $E\in\mathcal T^+$ which }
\\
\text{is $R$-flat and a surjection $E\to M$}
\end{array}\right\}\] 
as objects.   Once we give ourselves an $R$-point $x_0\in X(R)$, the functor  
\[
\mathcal T\longrightarrow R\text{-}\mathbf{mod},\qquad E\longmapsto \text{(global sections of) $x_0^*(E)$}
\] 
is exact, faithful and preserves tensor products, so that $\mathcal T$ is a neutral Tannakian category. For more details, see \cite{And01} and \cite{duong-hai-dos_santos18}. The flat affine  group scheme $\Pi_{\mathcal T}$, usually called the differential fundamental group scheme of $X$ at $x_0$ and  frequently  denoted  by $\Pi^{\mathrm{diff}}(X,x_0)$,   is difficult to be described in general. 
\end{ex}

\begin{ex}We assume that $R$ is Henselian and Japanese, e.g.  $R$ is     complete.  Let $X$ be an irreducible, proper and flat $R$-scheme with geometrically reduced fibres. Let $x_0\in X(R)$. 
Given a coherent sheaf $E$ on $X$, we say that $E$ is {\it trivialized by a proper morphism} if there exists a surjective and proper morphism $\psi:Y\to X$ such that $\psi^*E$ ``comes from $S=\mathrm{Spec}\,R$'', by which we mean that $\psi^*E$ is the pull-back of a module via the structural morphism $Y\to S$. Let $\mathcal T^+$ be the full subcategory of the category of coherent modules on $X$ having as objects those sheaves which are trivialized by a proper morphism. Proceeding along the lines of Example \ref{05.06.2023--3jp}, it is possible to construct a smaller full subcategory $\mathcal T$ of $\mathcal T^+$  such that, endowing $\mathcal T$ with the tensor product of sheaves, the functor 
\[
\mathcal T\longrightarrow R\text{-}\mathbf{mod},\qquad E\longmapsto \text{(global sections of) $x_0^*(E)$}
\]
produces a neutral Tannakian category and with it a flat group scheme   $\Pi^{\mathrm N}(X,x_0)$. Details are in \cite{hai-dos_santos23}. This is the analogue theory of Nori's theory for the fundamental group scheme \cite{nori76} in the relative setting, and one objective is to show that $\Pi^{\mathrm N}(X,x_0)$ is {\it pro-finite}. See \cite[Theorem 8.8]{hai-dos_santos23}.   
\end{ex}
 
\subsection{Galois-Tannaka group  schemes}\label{Tannak2}
One obvious strategy to study Tannakian categories is to filter them by categories ``generated'' by a single object, just as in studying Galois groups it is fundamental to study finite extensions. Let $\omega:\mathcal T\to R\text{-}\mathbf{mod}$ be    as in the previous section so that $\mathcal T$ is equivalent to $\mathrm{Rep}_R(\Pi)$ for some affine and flat group scheme $\Pi$. We shall take this equivalence as an equality, but we warn the reader that the structure of $\Pi$ should be considered as being very complicated (just as is that of an absolute Galois group). 

\defi
Let $M\in\mathcal T$ be an object possessing a dual $M^\vee$ and for each couple of non-negative integers $a,b$,  define $\mathbf T^{a,b} M$ as $M^{\otimes a}\otimes M^{\vee\,\otimes b}$. 
Then,   $\langle M\rangle_\otimes$ is  the full subcategory of $\mathcal T$ having as objects those which are quotients of subobjects of elements of the form 
\[
\mathbf T^{a_1,b_1}M\oplus\cdots\oplus \mathbf T^{a_r,b_r}M, 
\]
for varying $r$, $a_1,\ldots,a_r$, $b_1,\ldots,b_r$.  
The Tannakian group scheme associated to $\langle M\rangle_\otimes$ via $\omega$ will be called here the {\it (full) Galois-Tannaka group (scheme)} of $M$. 
\xdefi

As we concentrate on a neutral Tannakian category, it is instructive to note that the splicing of $\mathcal T$ by various $\langle M\rangle_\otimes$ amounts to looking at various ``images'' of $\Pi$.  Before entering this topic, recall that, given a base field  $F$ and   a morphism $\varphi:G'\to G$ of affine group schemes over $F$,  the closed image $\mathrm{Im}_\varphi$   \cite[I.9.5]{Gr60-67} is a {\it closed subgroup  scheme} of $G$  such that the natural morphism  $G'\to \mathrm{Im}_\varphi$ is {\it faithfully flat} \cite[Theorem on 15.1]{waterhouse79}. In this case, $\mathrm{Im}_\varphi$ enjoys both ``desirable properties'' of 
an image. 

\begin{dfn}Let   $\rho:\Pi\to G$ be a morphism of flat and affine group schemes over $R$. Define the {\it restricted} image of $\rho$, denoted $\mathrm{Im}_\rho$, as the affine scheme associated to the algebra  
\[
\text{$B_\rho$ = Image of $\mathcal O(G)\to \mathcal O(\Pi)$.}
\] (In other words, $\mathrm{Im}_\rho$ is the ``closed'' image of $\rho$ \cite[I.9.5]{Gr60-67}.)
Define its {\it full} image $\mathrm{Im}'_\rho$ as being the affine scheme associated to   
\[\tag{\S}
B_\rho'=\{f\in  \mathcal O(\Pi)\,:\,\text{   $\pi^mf\in B_\rho$, for some $m\ge0$}\}.
\]
\end{dfn}
It is not difficult to see that $\mathrm{Im}_\rho$ and $\mathrm{Im}'_\rho$ are affine group schemes. 
With these definitions,         $\rho$ factors as
\[\tag{$\dagger$}
\Pi \stackrel \psi\longrightarrow  \mathrm{Im}'_\rho \stackrel u \longrightarrow   \mathrm{Im}_\rho \stackrel \iota\longrightarrow    G,
\]
where   $\iota$ is a {\it closed immersion} and $u$ induces an isomorphism between generic fibres. A fundamental result \cite[Theorem 4.1.1]{duong-hai18} now assures that $\psi$ is faithfully flat, so that the terms ``images''  are justified and the factorization in $(\dagger)$  is called the {\it diptych} of $\rho$.

 In addition, if \[\rho_K:\Pi\otimes K\longrightarrow G\otimes K\] stands for the morphism obtained from $\rho$ by base-change to $K$, then we have 
\[
\mathrm{Im}'_\rho\otimes K=\mathrm{Im}_\rho\otimes K=\mathrm{Im}(\rho_K).
\]

\begin{propo}[{\cite[Proposition 4.10]{duong-hai-dos_santos18}}]\label{05.06.2023--1jp}
Let $M$ be a finite and free $R$-module affording a representation of $\Pi$ and let $\rho:\Pi\to \mathrm{GL}(M)$ be the associated  homomorphism. Then  the obvious functor $\mathrm{Rep}_R(\mathrm{Im}'_\rho)\to\mathrm{Rep}_R(\Pi)$ defines an equivalence between $\mathrm{Rep}_R(\mathrm{Im}'_\rho)$ and $\langle M\rangle_\otimes$. Put differently, $\mathrm{Im}'_\rho$ is the Galois-Tannaka group of $M$ (in $\mathrm{Rep}_R(\Pi)$). 
\end{propo}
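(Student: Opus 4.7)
The plan is to realize both $\mathrm{Rep}_R(\mathrm{Im}'_\rho)$ and $\langle M\rangle_\otimes$ as full subcategories of $\mathrm{Rep}_R(\Pi)$, via pullback along $\psi\co\Pi\to\mathrm{Im}'_\rho$ and via the given inclusion, respectively, and then show that these two subcategories coincide. By the diptych $(\dagger)$ the map $\psi$ is faithfully flat, so the pullback functor $\psi^*\co\mathrm{Rep}_R(\mathrm{Im}'_\rho)\to\mathrm{Rep}_R(\Pi)$ is fully faithful; its essential image is the category $\mathcal C$ of those $\Pi$-representations $V$ whose coaction $\Delta_V\co V\to V\otimes_R\mathcal O(\Pi)$ factors through $V\otimes_R B_\rho'$. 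It therefore suffices to prove $\mathcal C=\langle M\rangle_\otimes$.

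For the inclusion $\langle M\rangle_\otimes\subseteq\mathcal C$, the coaction on $M$ lands in $M\otimes B_\rho\subseteq M\otimes B_\rho'$ because $\rho$ factors as $\Pi\to\mathrm{Im}_\rho\hookrightarrow\mathrm{GL}(M)$. The category $\mathcal C$ is clearly closed under direct sums, tensor products, duals, and quotients; it is also closed under $\Pi$-submodules $W\subseteq V$ with $V/W$ flat over $R$, since in that case $(V\otimes B_\rho')\cap(W\otimes\mathcal O(\Pi))=W\otimes B_\rho'$ inside $V\otimes\mathcal O(\Pi)$. Consequently every object of $\langle M\rangle_\otimes$, being built from $M$ by these operations, lies in $\mathcal C$.

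For the converse inclusion, I take $V\in\mathcal C$, finite and free over $R$, and pass to the generic fibre. There $V_K$ is a representation of $(\mathrm{Im}'_\rho)_K=(\mathrm{Im}_\rho)_K$, which is a closed subgroup scheme of $\mathrm{GL}(M_K)$. The classical theorem of Serre over the field $K$ (\cite[Prop.~3]{serre68}) produces integers $a_i,b_i$ and $(\mathrm{Im}_\rho)_K$-stable submodules $N_K\subseteq N'_K\subseteq Q_K:=\bigoplus_i\mathbf T^{a_i,b_i}(M_K)$ with $V_K\cong N'_K/N_K$. Choosing the natural integral model $Q:=\bigoplus_i\mathbf T^{a_i,b_i}M$ and taking schematic closures $N:=N_K\cap Q$, $N':=N'_K\cap Q$ inside $Q_K$ yields $\Pi$-stable submodules of $Q$ with $R$-flat cokernels (a $\pi$-torsion element in $Q/N$ would lie in $N_K\cap Q=N$); hence $W:=N'/N$ belongs to $\langle M\rangle_\otimes$ and satisfies $W_K\cong V_K$.

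The main obstacle is the last step: passing from $W\in\langle M\rangle_\otimes$ to $V\in\langle M\rangle_\otimes$, since a priori these are two distinct $R$-lattices sharing the same generic fibre. This is where the $\pi$-saturation condition $(\S)$ defining $B_\rho'$ becomes essential: because both lattices correspond to coactions landing in the same $\pi$-saturated coalgebra $B_\rho'$, one can iteratively enlarge the family $(a_i,b_i)$ and modify the lattices $N,N'$ inside $Q$ by suitable powers of $\pi$ to absorb the discrepancy, thereby realising $V$ itself as the quotient of a $\Pi$-subobject with flat cokernel of the enlarged $Q$. This control of $R$-integral structure—absent in $\mathrm{Im}_\rho$—is exactly what forces one to use $\mathrm{Im}'_\rho$ rather than $\mathrm{Im}_\rho$ and constitutes the heart of the argument.
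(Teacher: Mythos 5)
Your architecture is the right one (and, as far as I can tell, the one of \cite[Prop.~4.10]{duong-hai-dos_santos18}): identify the essential image of $\psi^*$ with the subcategory $\mathcal C$ of representations whose coaction lands in $V\otimes B_\rho'$, check $\langle M\rangle_\otimes\subseteq\mathcal C$ by closure properties (this is where saturation of $B_\rho'$ is genuinely used, since coefficients of non-saturated subobjects of $\bigoplus\mathbf T^{a_i,b_i}M$ lie only in $B_\rho'$, not in $B_\rho$), and prove the converse by descending to the generic fibre. The first two stages are correct. But the decisive step is missing: having produced $W=N'/N\in\langle M\rangle_\otimes$ with $W_K\cong V_K$, you must pass from the lattice $W$ to the lattice $V$, and here you only offer ``iteratively enlarge the family $(a_i,b_i)$ and modify the lattices by suitable powers of $\pi$ to absorb the discrepancy.'' This is not an argument: no enlargement or modification is specified, and there is no visible mechanism by which adding more tensor factors would change which lattices inside a \emph{fixed} generic fibre are reachable. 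Since you yourself call this ``the heart of the argument,'' the proof is incomplete exactly where it matters.

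The step has, however, a short proof that avoids any iteration. First, $\langle M\rangle_\otimes$ is closed under subobjects: if $U=N'/N$ with $N\subseteq N'\subseteq Q$ and $U'\subseteq U$ is a $\Pi$-subobject, then $U'=N''/N$ for the preimage $N''$ of $U'$ in $N'$, so $U'$ is again a quotient of a subobject of $Q$. Second, the $\Pi$-equivariant isomorphism $\phi\co V_K\isomto W_K$ carries $V$ to a $\Pi$-stable lattice $\phi(V)$ of $W_K$, and $\phi(V)\subseteq \pi^{-N}W$ for $N\gg0$; multiplication by $\pi^N$ is a $\Pi$-equivariant isomorphism $\pi^{-N}W\isomto W$, so $V$ is isomorphic to a $\Pi$-subobject of $W$, hence lies in $\langle M\rangle_\otimes$. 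Two smaller points: (i) you treat only $R$-free objects of $\mathcal C$; to conclude you should note that every object of $\mathrm{Rep}_R(\mathrm{Im}'_\rho)$ is a quotient of a free one (the relative form of \cite[Prop.~3]{serre68} already invoked in Section \ref{Tannak1}) and that $\langle M\rangle_\otimes$ is closed under quotients; (ii) the identification of the essential image of $\psi^*$ with $\mathcal C$ uses that $\mathcal O(\Pi)/B_\rho'$ is $\pi$-torsion-free (hence $R$-flat), which is again the saturation --- worth saying explicitly, as it is what makes $V\otimes B_\rho'\to V\otimes\mathcal O(\Pi)$ injective for non-flat $V$.
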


\rema Let $\mathrm{Rep}_R^\circ(\mathrm{Im}_\rho)$ be the full subcategory of $\mathrm{Rep}_R(\mathrm{Im}_\rho)$ consisting of objects having a dual; it is possible to show that $\mathrm{Rep}_R^\circ(\mathrm{Im}_\rho)$ is equivalent to a full subcategory, $\langle M\rangle_\otimes^s$, of $\langle M\rangle_\otimes$. See Proposition 4.10 in \cite{duong-hai-dos_santos18}. 
On   the other hand, the functor  $\mathrm{Rep}_R(\mathrm{Im}_\rho)\to \mathrm{Rep}_R(\Pi)$ may easily fail to be full.  
\xrema

{\it From now on, we give ourselves a representation $\rho:\Pi\to \mathrm{GL}(M)$} as in Proposition \ref{05.06.2023--1jp}. 
It is at this point that  the theory over $R$ begins to distance itself from the theory over a field in a significant way. Indeed, in the case of a base-field,  Galois-Tannaka group schemes  are known to be of finite type \cite[Proposition 2.20]{deligne-milne82}. {\it This is not unconditionally true over $R$} since in order to construct $\mathrm{Im}'_\rho$, it was required to ``saturate'' the ring $B_\rho$ in (\S). On the other hand,  the morphism  $\mathrm{Im}_\rho\to \mathrm{GL}(M)$ is a closed immersion and $\mathrm{Im}_\rho$ is of finite type.

\defi A {\it model} of a   group scheme of finite type $G$ over $K$ is a flat group scheme $\bbG$ over $R$ such that $\bbG \otimes_R K \cong  G$,  as $K$-group schemes. We often   identify
 $G$ and the generic fibre $\bbG\otimes_RK$. 
A morphism of models $\bbG \to \bbG'$ of 
$G$ is a morphism $ \bbG \to \bbG'$ of group schemes over 
$R $ which induces the identity on $G$ once unravelled the proper identifications. 
\xdefi

\rema The definition of   model used here    differs from the one used in \cite[\href{https://stacks.math.columbia.edu/tag/0C2R}{Tag 0C2R}]{stacks-project} and \cite{WW80}  namely, we do not assume our models to be of finite type over $R$. 
\xrema

With this terminology, $\mathrm{Im}_\rho$  and $\mathrm{Im}'_\rho$ are models of        $\mathrm{Im}(\rho_K)$. 
 A well-known result of Waterhouse-Weisfeler about the relations between models is the following.

\begin{thm}[{\cite[Theorem 1.4]{WW80}, \cite[Theorem 2.11]{duong-hai-dos_santos18}}]\label{thWW} Let $v:G' \to G $ be a morphism of flat $S$-group schemes      such that $v$ is an isomorphism  on generic fibres. Then $v$ is a   composite of mono-centered N\'eron blowups (along the divisor defined by $\pi$). In other words, a morphism of models of finite type is a composite of mono-centered N\'eron blowups. If $G$ and $G'$ are of finite type, then the number of N\'eron blowups is finite. 

More precisely: 
Define $v_0=v$ and $G_0=G$, and assume that $v_0,\ldots,v_n$ have been inductively constructed. Then,   put     
\[
G_{n+1}=\mathrm{Bl}_{\mathrm{Im}_{v_n\otimes k}}^{G_n\otimes k}(G_n).
\](Recall that $k$ is the residue field.)
Once this is done, define   $v_{n+1}:G'\to G_{n+1}$ to  be the morphism obtained from the universal property of $\mathrm{Bl}_{\mathrm{Im}_{v_n\otimes k}}^{G_n\otimes k}(G_n)$ (cf. Proposition \ref{Neron.blow.lemm}). Then 
\[
\varprojlim_nv_n:G'\longrightarrow \varprojlim_nG_n
\]
is an isomorphism. In particular, if for some $n\in\mathbb N$ the homomorphism  $v_n\otimes k$ is faithfully flat, then $G'\simeq G_n$.  
\end{thm}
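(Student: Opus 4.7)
I would argue by iteratively invoking the universal property of mono-centered N\'eron blowups (Proposition \ref{Neron.blow.lemm}). Flatness of $G' \to S$ ensures that $\pi$ is a non-zero-divisor locally on $G'$, so $G' \otimes_R k$ is an effective Cartier divisor and $G' \to S$ lies in $\Sch_S^{V(\pi)\text{-reg}}$. Assume inductively that a flat $S$-group scheme $G_n$ and a $G$-morphism $v_n : G' \to G_n$ have been built; by construction $v_n$ is an isomorphism on generic fibres. Let $H_n := \on{Im}(v_n \otimes k) \subset G_n \otimes k$ be the scheme-theoretic image, which is a closed subgroup scheme. Then $v_n \otimes k$ factors through $H_n$ tautologically, so the universal property of $G_{n+1} := \Bl_{H_n}^{G_n \otimes k}(G_n)$ produces a unique lift $v_{n+1} : G' \to G_{n+1}$. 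Flatness of $G_{n+1} \to S$ comes from Proposition \ref{blow.up.smoothness.prop}(3)(iii) (local rings of $R$ are valuation rings), and Proposition \ref{Neron.blow.lemm}(4) then endows $G_{n+1}$ with a group scheme structure compatible with $G_{n+1} \to G_n$.

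\textbf{Convergence.} Since every $G_n \to G$ is affine, showing that $G' \to \varprojlim_n G_n$ is an isomorphism amounts to showing that the ascending chain of subrings
\[
\calO(G) = A_0 \subset A_1 \subset A_2 \subset \cdots \subset \calO(G') \subset \calO(G') \otimes_R K,
\]
with $A_n := \calO(G_n)$, has union equal to $\calO(G')$. Fix $f \in \calO(G')$. Because $v$ is an isomorphism on generic fibres, there is a minimal $N \ge 0$ with $\pi^N f \in A_0$. I claim $f \in A_N$. Locally one has $A_{n+1} = A_n[\pi^{-1} I_n]$, where $I_n \subset A_n$ is the preimage of the ideal of $H_n$ in $A_n/\pi A_n$; in particular any $g \in A_n$ whose image in $\calO(G')/\pi\calO(G')$ vanishes lies in $I_n$, hence $g/\pi \in A_{n+1}$. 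Applied to $g = \pi^N f$ with $N \ge 1$ this yields $\pi^{N-1} f \in A_{n+1}$. Iterating from $n=0$ gives $f \in A_N$, so $\calO(G') = \bigcup_n A_n$ and $G' \cong \varprojlim_n G_n$.

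\textbf{Finite-type case and main obstacle.} If $G$ and $G'$ are of finite type, $\calO(G')$ is a finitely generated $R$-algebra; fixing generators $f_1, \ldots, f_r$ and letting $N$ be the maximum of their denominator exponents, the denominator-drop argument places them all in $A_N$, so $v_N$ is already an isomorphism. The supplementary assertion of the theorem is immediate: if $v_n \otimes k$ is faithfully flat then $H_n = G_n \otimes k$, the ideal $I_n$ equals $\pi A_n$, and $A_{n+1} = A_n$, so $v_n$ is already an isomorphism. The most delicate technical ingredient is the denominator-drop step, which rests on the explicit description $A_{n+1} = A_n[I_n/\pi]$ of the dilatation algebra and on identifying $I_n$ (mod $\pi$) with the vanishing ideal of the scheme-theoretic image of $v_n \otimes k$; a secondary point is verifying that this scheme-theoretic image is genuinely a closed \emph{subgroup} scheme, which uses that $v_n \otimes k$ is a morphism of group schemes over a field and is faithfully flat onto its image.
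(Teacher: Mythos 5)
Your argument is correct and is essentially the standard Waterhouse--Weisfeiler proof from the cited references (\cite[Theorem 1.4]{WW80}, \cite[Theorem 2.11]{duong-hai-dos_santos18}), to which the survey defers: the whole content is the denominator-drop step, which you carry out correctly by identifying $I_n$ with $\{g\in A_n : g\in\pi\calO(G')\}$ via the definition of the closed image $H_n$. The only point worth tightening is the final assertion: from $A_{n+1}=A_n$ one should add that uniqueness of the lift forces the tower to be constant from stage $n$ onward, whence $A_n=\bigcup_m A_m=\calO(G')$ by the convergence statement already established.
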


As was mentioned before, it is   possible that $\mathrm{Im}'_\rho$ fails to be of finite type and hence the number of N\'eron blowups proposed by Theorem \ref{thWW} to   describe  $u:\mathrm{Im}_\rho'\to\mathrm{Im}_\rho$
may  be infinite. But in some cases, it does happen that  the number of N\'eron blowups is finite and a condition for this situation is described in  
 Theorem \ref{thWW}.
At this point, we remind the reader that in the situations  we have in mind, the group scheme $\Pi$ is usually extremely complicated and the determination of the image of a morphism $\Pi\otimes k\to G\otimes k$, so that it is possible to apply the last claim in Theorem \ref{thWW}, can only be achieved on the side of $\mathrm{Rep}_R(\Pi)$. 

It then becomes relevant to determine faithful representations of N\'eron blowups. (Here, we say that a representation is {\it faithful} if the morphism to the associated general linear group is a closed immersion. This is not universally adopted.) The next result explains how to proceed in certain cases.

\begin{thm}[{\cite[Corollary 3.6]{duong-hai-dos_santos18}}]Let $G$ be an affine and flat group scheme of finite type over $S$. Let $M$ be a finite and free $R$-module affording a faithful representation of $G$. Given $m\in M$, let 
\[
H_0 =\text{stabilizer of $m\otimes1\in M\otimes k$}
\] 
in $G\otimes k$. Let $G'=\mathrm{Bl}_{H_0}^{G\otimes k}(G)$.  Then, letting $\mathbbm1=R$ stand for the trivial representation of $G'$, the obvious map $\mathbbm1\to M\otimes k$ determined by $1\mapsto v\otimes 1$
is   $G'$-equivariant and the fibered product 
\[\tag{$\P$}
M':=M\underset {M\otimes k}{\times}\mathbbm1
\]
now affords a faithful  representation of $G'$.
\end{thm}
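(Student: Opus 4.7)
The proof will proceed in three stages: verifying the equivariance of $\iota$, endowing $M'$ with a $G'$-module structure, and proving faithfulness. For the first, I will invoke Proposition~\ref{Neron.blow.lemm}(3), which ensures that the restriction $G' \otimes k \to G \otimes k$ factors through the closed subgroup $H_0 \subset G \otimes k$. Since $H_0$ is by definition the stabilizer of $m \otimes 1$, the induced $G'$-action on $m \otimes 1 \in M \otimes k$ is trivial; hence $\iota : \mathbbm{1} \to M \otimes k$ is $G'$-equivariant for the trivial action on $\mathbbm{1}$. Because both arrows defining the fibered product $M' = M \times_{M \otimes k} \mathbbm{1}$ are then $G'$-equivariant, $M'$ inherits a canonical $G'$-module structure.

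Next, I would make $M'$ explicit. Viewing $M' = \{(x,r) \in M \oplus R : x - rm \in \pi M\}$ and choosing an $R$-basis $e_1, \ldots, e_n$ of $M$ together with the generator $f = 1$ of $\mathbbm{1} = R$, the elements $\pi e_1, \ldots, \pi e_n, m + f$ form an $R$-basis of $M'$ (regarded as a lattice inside the generic fibre $(M \oplus R) \otimes_R K$); in particular $M' \cong M \oplus R$ is finite and free of rank $n+1$. A direct calculation shows that in this basis the action of $g \in G'$ on $M'$ is given by the block matrix
\[
\begin{pmatrix} g|_M & \pi^{-1}(gm - m) \\ 0 & 1 \end{pmatrix},
\]
where $g|_M$ denotes the matrix of the action of $g$ on $M$ via $G' \to G \to \GL(M)$. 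The entries of the upper-right block lie in $R$ precisely because $G'$ satisfies the Néron-blowup condition $gm \equiv m \pmod{\pi M}$.

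The heart of the proof, and the main obstacle, is faithfulness. My plan is to show that the ring homomorphism $\calO(\GL(M')) \to \calO(G') =: A'$ is surjective. Let $A = \calO(G)$, so that $\calO(\GL(M)) \to A$ is surjective by the faithfulness of the original representation. The stabilizer $H_0$ is cut out in $G \otimes k$ by the equations $\lambda(gm - m) \pmod{\pi}$ for $\lambda$ ranging over $M^\vee$; consequently the ideal $I_{H_0} \subset A$ defining $H_0$ is generated by $\pi$ together with $\{\lambda(gm - m) : \lambda \in M^\vee\}$, and by the very construction of the dilatation,
\[
A' = A\bigl[I_{H_0}/\pi\bigr] = A\bigl[\{\pi^{-1}\lambda(gm - m)\}_{\lambda \in M^\vee}\bigr].
\]
The matrix description above then shows the image of $\calO(\GL(M')) \to A'$ contains both $A$ itself (from the upper-left block, using surjectivity of $\calO(\GL(M)) \to A$) and the generators $\pi^{-1}\lambda(gm - m)$ (from the upper-right block). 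Together these generate $A'$, so the map is surjective and $G' \hookrightarrow \GL(M')$ is a closed immersion. Geometrically, this reflects the fact that $G'$ embeds into the closed subscheme $G \ltimes \bbV(M) \subset \GL(M')$ of block upper-triangular matrices as the graph of the morphism $g \mapsto \pi^{-1}(gm - m)$, and the identification of this ``extra coordinate'' with the dilatation generators of $A'$ is exactly what drives the argument.
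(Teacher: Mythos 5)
Your proof is correct and follows essentially the argument of the cited reference (the survey itself states this result without proof, deferring to Corollary 3.6 of \cite{duong-hai-dos_santos18}): you identify $M'$ as the free lattice with basis $\pi e_1,\dots,\pi e_n, m+f$, observe that the extra column of the resulting block matrix consists precisely of the dilatation generators $\pi^{-1}\lambda(gm-m)$ of $\calO(G')$ over $\calO(G)$, and deduce surjectivity of $\calO(\GL(M'))\to\calO(G')$, which is exactly the intended mechanism. The only cosmetic points are that the statement's ``$v\otimes 1$'' should read ``$m\otimes 1$'' (as you implicitly assume), and that one should remark that $A'\subset A[1/\pi]$ is automatically $R$-flat, so $G'$ is indeed a group scheme.
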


Let us illustrate the above result with a simple example showing how to compute a Galois-Tannaka group. 

\begin{ex}Let $k$ be of characteristic zero and  $\mathcal T$ be the category of representations of the abstract group $\mathbb Z$ on finite $R$-modules. It is not difficult to see that $\mathcal T$ is neutral Tannakian \cite[Corollary 4.5]{hai-dos_santos21}. Let $\mathbb Z$ act on  
$M=R$ by $\gamma\cdot r=(1+\pi)^\gamma r$ and write $\rho:\Pi\to   \mathrm{GL}(M)(\simeq\mathbb G_{m,R})$  for the associated morphism of group schemes. It is not difficult to see that $\mathrm{Im}_\rho=\mathbb G_{m,R}$ and we wish to compute $\mathrm{Im}_\rho'$. As mentioned above, the construction ($\S$) is of little use since $\mathcal O(\Pi)$ is   mysterious. On the other hand, we know that $\Pi$ will act trivially on   $M\otimes k$ because $\mathbb Z$ does. We then need to perform the ``dilatation'' $M'$ of $M$ as in $(\P)$, which is a faithful representation of the N\'eron blowup $\mathrm{Bl}_{\{e\}}^{\mathbb G_m\otimes k}(\mathbb G_m)=:G'$. Before proceeding, we offer a down-to-earth description of 
$G'$ hoping that it will be useful in   the calculations to come. 

By definition,   $\mathbb G_{m,R}=\mathrm{Spec}\,R[x,x^{-1}]$ and comultiplication on the Hopf algebra $R[x,x^{-1}]$
is given by $x\mapsto x\otimes x$. 
Introducing the function  $q=x-1$, comultiplication becomes $q\mapsto 1\otimes q  +q\otimes 1+q\otimes q$. Let   $q' =\pi^{-1}q\in \mathcal O(G')$. Then, comultiplication on  $\mathcal O(G')$ is defined by $q'\mapsto 1\otimes q'  +q'\otimes 1+\pi q'\otimes q'$. In particular, comultiplication on $\mathcal O(G')\otimes k$ sends  $q'$ to $q'\otimes 1+1\otimes q'$. From this the reader should be able to see that $G'\otimes k\simeq \mathbb G_{a,k}$.

We now return to the side of representations.
Since $M\otimes k$ has the {\it trivial} action of $\mathbb Z$, it   follows that $\Pi$ acts trivially on it as well. That is, $M\otimes k$ is the trivial representation of $\Pi\otimes k$. Therefore, $\rho$ factors through $G'\to\mathbb G_{m,R}=\mathrm{GL}(M)$. 
Let $\rho':\Pi\to G'$ be the resulting morphism. We claim that $\rho'$ is faithfully flat, and for that it is enough to show that $\rho'$ is faithfully flat on both the generic and special fibres \cite[Theorem 4.1.1]{duong-hai18}. That this is the case for the generic fibre is already assured and   we are left with the analysis of the special fibre.  
Now, $G'\otimes k\simeq\mathbb G_{a,k}$  and any  element  of $k\setminus\{0\}$  generates a dense subgroup of $\mathbb G_{a,k}$. Hence, if we show that $\rho'$ does not induce the {\it trivial} morphism on $k$-rational points, we then guarantee that $\rho'\otimes k$ has a dense image, and therefore is faithfully flat. We must hence consider the faithful representation $M'$ of $G'$. 
 
The elements  $m_1:=(\pi,0)$ and $m_2:=(1,1)$ obviously form a basis for $M'$ and hence the resulting representation of $\mathbb Z$ is defined by 
\[
\gamma\longmapsto\begin{pmatrix}1+\pi&1\\0&1\end{pmatrix}^\gamma.
\] 
Consequently, $M'\otimes k$ is {\it not} the trivial representation of $\mathbb Z$ and therefore it is  {\it not either}  the trivial representation of $\Pi\otimes k$. Hence, $\rho'\otimes k$ is not the trivial morphism and we conclude that $\rho'\otimes k$ is faithfully flat. Our verification that 
$\mathrm{Im}_{\rho}'$ is $G'$ is finished. 
\end{ex}

On the other hand, when the number of N\'eron blowups envisaged by Theorem \ref{thWW} is infinite, 
a general principle behind \cite{duong-hai-dos_santos18, hai-dos_santos21} is that the  Galois-Tannaka groups can be obtained from group schemes of finite type  via certain special types of  (what we now call) {\it multi-centered}  N\'eron blowups.  This is treated in the next section. 

\subsection{  N\'eron blowups of formal subgroup schemes}\label{Tannak3}
Multi-centered dilatations having divisors which are supported on the same space have been studied more closely. For an affine  group scheme $G$ over $R$, we shall write $\widehat G$ for the completion $G_{/G_0}$ of $G$ along its closed fiber \cite[I.10]{Gr60-67}.

\begin{dfn}[{\cite[Definition 5.6]{duong-hai-dos_santos18}}]\label{15.05.2023--1} Let  $G\to S$ be an affine flat group scheme of finite type. For each $i\in\mathbb N$, let $G_i$ be the $S_i$-group scheme $G\times_{S}S_i$, and let  $H_i\to S_i$ be a closed,  $S_i$--flat, subgroup-scheme of  $G_i$. Assume, in addition, that the natural base-change morphism 
\[
H_{i+1}\times_{S_{i+1}} S_i\longrightarrow G_{i+1}\times_{S_{i+1}} S_i=G_i
\]  
defines an isomorphism  $H_{i+1}\times_{S_{i+1}} S_i\simeq H_i$ of group schemes. 
Said differently, the family $\{H_i\}$ induces a {\it formal closed subgroup scheme $\mathfrak H$ of $\widehat G$}.
We define the {\it     N\'eron blowup} of $G$ along $\mathfrak H$, call it $\mathrm{Bl}^{\widehat G}_{\mathfrak H} G$, as being $\mathrm{Bl}_{\{H_i\}}^{\{G_
i\}}G\to G$. 
\end{dfn}

\begin{rmk} If the formal scheme  $\mathfrak H$ is ``algebraizable'', meaning that it comes from a closed and flat subgroup scheme $H\subset G$, this is mentioned in \cite[\S7.2]{PY06}. 
\end{rmk}

\begin{ex}\label{05.06.2023--2jp}Let $p$ be a prime number,  $R=\mathbb Z_p$ and $G=\mathbb G_{a,R}$. It then follows that the completion of $G$ along    its closed fibre (i.e. the divisor defined by $(p)$) is $\mathrm{Spf}\,\mathbb Z_p\langle x\rangle$, where $\mathbb Z_p\langle x\rangle$ is the subring of $\mathbb Z_p\llbracket x\rrbracket$ consisting of power series $\sum_na_nx^n$ such that $\lim a_n=0$. 
Let $\mathfrak H$ be the closed formal subscheme of $\widehat G$ determined by the ideal $(x)\subset \mathbb Z_p\langle x\rangle$.  Then, it is not difficult to see that $\mathrm{Bl}_{\mathfrak H}^{\widehat G}G$ is the group scheme determined by the Hopf subalgbra $A=\{P\in \mathbb Q_p[x]\,:\,P(0)\in \mathbb Z_p\}$. Note that $\mathrm{Bl}_{\mathfrak H}^{\widehat G}G\otimes \mathbb F_p$ is the trivial group scheme, while $\mathrm{Bl}_{\mathfrak H}^{\widehat G}G\otimes \mathbb Q_p$ is $\mathbb G_{a,\mathbb Q_p}$. In particular, the dimension of the generic and closed fibres is distinct, even though $\mathrm{Bl}_{\mathfrak H}^{\widehat G}G$ is itself flat over $\mathbb Z_p$. Note, on the other hand, that the $\mathbb Z_p$-module $A$ contains a copy of $\mathbb Q_p$ and hence fails to be {\it projective} over $\mathbb Z_p$. This seemingly harmless property is the cause of complications in the category of representations \cite[Proposition 6.19]{hai-dos_santos21} as the {\it inexistence} of  intersections of subrepresentations.  
\end{ex}

\begin{ex}[{\cite[4.3]{hai-dos_santos21}}]Let   $R=k\llbracket \pi\rrbracket$, where $k$ is a field of characteristic zero and let $G=\mathbb G_{a,R}\times_R\mathbb G_{m,R}$. Letting $x$ stand for  ``the'' coordinate of $\mathbb G_{a,R}$ and $y$ for  ``the'' coordinate of $\mathbb G_{m,R}$, we define  
\[
e^{\pi x}=\sum_{i=0}^\infty \frac {\pi^i  }{i!}x^i;
\]  
this is an element of $\widehat{\mathcal O(G)}$. It is not difficult to see that 
$y-e^{\pi x}$ cuts out a closed and formal subgroup  scheme of $\widehat G$, call it $\mathfrak H$, and hence we obtain a model $\mathrm{Bl}_{\mathfrak H}^{\widehat G}\to G$. Note that $\mathfrak H$ is not algebraizable.  Differently from the situation in Example \ref{05.06.2023--2jp}, the $R$-module  $\mathcal O(\mathrm{Bl}_{\mathfrak H}^{\widehat G})$ is \emph{projective}.
\end{ex}

One important consequence of the procedure of taking formal blowups is the following. It says that, in some contexts, all the information concerning a model of a group scheme can be encoded in a formal N\'eron blowup (Theorem \ref{thm3.3hds21}). 

\begin{thm}[{\cite[Corollary 3.3]{hai-dos_santos21}}] \label{thm3.3hds21} Suppose that the $R$ is     complete and of residual characteristic zero.  Let  $\mathcal G\to G$ be a morphism of affine and flat  $R$-group schemes  inducing an isomorphism on the  generic fibres, and suppose in addition that $G$ is of finite type. Then, there exists a group scheme $G'$ over $R$, flat and of finite type, and a morphism of group schemes $G'\to G$ which is an isomorphism on generic fibres, a closed and formal subgroup scheme $\mathfrak H'$ of $\widehat G'$, and an isomorphism 
\[
\mathcal G\stackrel \sim\longrightarrow \mathrm{Bl}^{\widehat  G'}_{\mathfrak H'} G'.
\]
\end{thm}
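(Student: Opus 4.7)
The plan is to leverage the Waterhouse--Weisfeler structure theorem (Theorem \ref{thWW}) to present $\mathcal{G}$ as an inverse limit of mono-centered N\'eron blowups, and then to repackage this tower as a single multi-centered, formal N\'eron blowup of an appropriate finite-type model.

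I would begin by applying Theorem \ref{thWW} to $\mathcal{G}\to G$: this yields $\mathcal{G}\simeq\varprojlim_n G_n$, with $G_0=G$, $G_{n+1}=\mathrm{Bl}_{H_n}^{G_n\otimes k}G_n$, and $H_n\subset G_n\otimes k$ the closed image of the induced map $v_n\otimes k\colon\mathcal{G}\otimes k\to G_n\otimes k$. Each $G_n$ is flat and of finite type over $R$ (using Proposition \ref{blow.up.smoothness.prop}(1) inductively, once the regularity of the centres is established), and the structural map $G_n\to G$ is an isomorphism on generic fibres. If the tower becomes constant at some finite stage, then $\mathcal{G}$ is itself of finite type and the conclusion is immediate with $G'=\mathcal{G}$ and $\mathfrak{H}'$ trivial; henceforth I assume the tower is genuinely infinite.

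Next, I would choose an integer $N$ large enough that, from stage $N$ onward, the successive N\'eron blowups only refine the infinitesimal structure rather than introducing genuinely new subgroup components on the reduced fibres, and set $G':=G_N$. For each $i\geq 0$, let $\overline{H}_i$ denote the schematic image of $\mathcal{G}\otimes_R R/\pi^{i+1}\to G'\otimes_R R/\pi^{i+1}$. The candidate formal subgroup is $\mathfrak{H}':=\{\overline{H}_i\}_{i\geq 0}\subset\widehat{G'}$. To check that $\mathfrak{H}'$ fits Definition \ref{15.05.2023--1}, one must show that each $\overline{H}_i$ is a \emph{flat} closed $S_i$-subgroup scheme of $G'\times_R S_i$ and that the base change compatibility $\overline{H}_{i+1}\times_{S_{i+1}}S_i\simeq\overline{H}_i$ holds. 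The subgroup structure is automatic from the fact that $\mathcal{G}\to G'$ is a morphism of group schemes; the flatness and the compatibility are the delicate points, and this is precisely where the residue characteristic zero hypothesis enters, through the affine bundle description of N\'eron blowup exceptional divisors (Proposition \ref{exceptional divisor}) and the absence of Frobenius-type obstructions in characteristic zero.

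Finally, the universal property of multi-centered dilatations (Proposition \ref{Neron.blow.lemm}(1)) furnishes a canonical comparison morphism $\mathcal{G}\to\mathrm{Bl}^{\widehat{G'}}_{\mathfrak{H}'}G'$, and this is the sought isomorphism. Both sides are flat $R$-group schemes agreeing on generic fibres; using Proposition \ref{multietape} one decomposes $\mathrm{Bl}^{\widehat{G'}}_{\mathfrak{H}'}G'\to G'$ into an iterated system of mono-centered N\'eron blowups at the $\overline{H}_i$, and by the very construction of the $\overline{H}_i$ this system coincides with the tail $\{G_n\}_{n\geq N}$ of the Waterhouse--Weisfeler tower for $\mathcal{G}\to G'$. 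Applying Theorem \ref{thWW} to the comparison map then forces it to be an isomorphism. The hard part of the whole argument is the middle step: securing flatness of the images $\overline{H}_i$ and their compatibility under truncation, which is exactly where residue characteristic zero is indispensable, and where the choice of $N$ must be made with care so that no new components appear beyond stage $N$.
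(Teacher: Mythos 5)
Your overall architecture --- Waterhouse--Weisfeiler tower, stabilization at a finite stage $N$, assembly of the later centres into a formal subgroup scheme of $\widehat{G_N}$, and comparison via the universal property --- is indeed the strategy of the proof in the cited reference (the survey itself gives no proof). But the step you defer (``choose $N$ large enough that the blowups only refine the infinitesimal structure,'' plus the flatness and truncation-compatibility of the $\overline{H}_i$) is the entire mathematical content of the theorem, and you give no argument for it beyond invoking the absence of ``Frobenius-type obstructions.'' The actual mechanism is concrete. (i) Each transition $G_{n+1}\otimes k\to G_n\otimes k$ factors through $H_n$ by Proposition \ref{Neron.blow.lemm}(3), so $H_{n+1}$ maps onto $H_n$ and the dimensions $\dim H_n$ form a non-decreasing sequence bounded by $\dim G\otimes k$; hence they stabilize at some $N$. (ii) By Cartier's theorem every group scheme over the residue field $k$ is smooth --- this, rather than the affine-bundle description per se, is where residual characteristic zero first enters --- so each $H_n$ is regularly immersed in $G_n\otimes k$ and Theorem \ref{theo:dilatations of subgroups} applies: the kernel of $G_{n+1}\otimes k\to H_n$ is a vector group $V$. (iii) For $n\geq N$ the kernel of the faithfully flat map $H_{n+1}\to H_n$ is a zero-dimensional closed subgroup of $V$, i.e.\ a finite subgroup of a unipotent group in characteristic zero, hence trivial; so $H_{n+1}\stackrel{\sim}{\to}H_n$. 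It is this chain of isomorphisms, fed through the iterated-dilatation description (Proposition \ref{nutheta}), that produces flat subgroup schemes $H_i'\subset G_N\otimes_R R/\pi^{i+1}$ satisfying the compatibility of Definition \ref{15.05.2023--1} and hence the formal subgroup $\mathfrak H'$. Without (i)--(iii) your text is a restatement of the theorem rather than a proof of it.

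Two smaller points. First, defining $\mathfrak H'$ directly as the schematic images of $\mathcal G\otimes R/\pi^{i+1}\to G'\otimes R/\pi^{i+1}$ is a reasonable guess, but you still owe the reader the flatness of these images over $R/\pi^{i+1}$ --- schematic images over Artinian rings are not flat in general, and establishing this is again exactly where the stabilization argument is consumed. Second, in your degenerate case where the tower is eventually constant you should take $\mathfrak H'=\widehat{G'}$ (that is, $H_i=G'\times_S S_i$ for all $i$), not the trivial subgroup: the Néron blowup along the trivial formal subgroup is the limit of the congruence filtration, which is very far from being $G'$.
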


\rema Under the assumptions of Theorem \ref{thm3.3hds21},  Theorems \ref{thWW} and \ref{thm3.3hds21} together say that any morphism of models $G' \to G $ with $G$ of finite type over $R$ is obtained as a composite of multi-centered N\'eron blowups, and more precisely as a formal N\'eron blowup composed by several mono-centered N\'eron blowups. \xrema

\section{Congruent isomorphisms and relations with Bruhat-Tits buildings, the Moy-Prasad isomorphism and admissible representations of $p$-adic groups}
\label{RepIso}

In this section we report on congruent isomorphisms. Let $(\calO,\pi)$ be a henselian pair where $\pi\subset \calO$ is
an invertible ideal. 
Let us start with the following result proved in \cite{MRR20}.

\theo[\cite{MRR20}] (Congruent isomorphism) \label{moncong}
Let $r,s$ be integers such that $0\le r/2\le s\le r$.
 Let $G$ be a smooth, separated
$\calO$-group scheme. Let $G_r$ be the $r$-th iterated dilatation of
the unit section (i.e. $G_r = \Bl _{e_G}^{\calO /\pi^r} G$) and $\frakg_r$ be its Lie algebra. If $\calO$ is
local or $G$ is affine, there is a canonical and functorial
isomorphism of groups:
\[\tag{$\star$} G_s(\calO)/G_r(\calO) \isomto \frakg_s(\calO)/\frakg_r(\calO).
\]
\xtheo

\rema \label{rem1iso} We comment on works prior to Theorem \ref{moncong}. \begin{enumerate} \item 
In the case of an affine and smooth group scheme
over a discrete valuation ring, the isomorphism of
Theorem~\ref{moncong} appears without proof in \cite[proof of Lemma 2.4]{Yu15}. 

\item 
The proof of Theorem \ref{moncong} relies on Proposition \ref{exceptional divisor} and Theorem \ref{theo:dilatations of subgroups} whose proofs (given in \cite[Prop. 2.9]{MRR20} and \cite[Th. 3.5]{MRR20}) basically consist in playing and computing with quasi-coherent ideals. These computations on quasi-coherent ideals in \cite{MRR20} were partly motivated by related computations on ideals done in the affine case in \cite[Appendix A]{Ma19t} to understand the congruent isomorphism. The statement of \cite[Th. 3.5]{MRR20} is moreover partly inspired by \cite[Th. 1.5, Th. 1.7]{WW80}. 

\item  
 If $G=\mathbb G_m/\bbZ_p$,  the isomorphism ($\star$) follows from the multiplicative structure of $\bbZ _p $ cf. e.g. \cite{He1913}, \cite{Ha50} and \cite[Chap. 15]{Ha80}. Similar isomorphisms for matrix groups over non-Archimedean local fields were used in \cite[p.~442 line 1]{Ho77}, \cite[2.13]{Mo91}, \cite[p.~22]{BK93}, \cite[p.~337]{Sec04} and many other references to study admissible representations of $p$-adic classical groups. In the matrix case, the filtrations involved are defined using matrix theoretic descriptions and avoiding scheme theoretic tools. For general reductive groups over non-Archimedean local fields, such kind of isomorphisms were introduced and used in  \cite[\S2]{PR84}, \cite[\S2]{MP94}, \cite{MP96},
\cite[\S1]{Ad98}, \cite[\S1]{Yu01} to study admissible representations. In the reductive case, the filtrations involved are the Moy-Prasad filtrations  \cite{MP94}, \cite{MP96} and the isomorphism is called the Moy-Prasad isomorphism. These filtrations are defined for points in the Bruhat-Tits building using the associated valued root datum \cite{BT72} \cite{BT84}. The Moy-Prasad isomorphim in these references was defined using somehow ad hoc formulas and the valued root datum, in particular avoiding the congruent isomorphism. However it is known that one has to modify the original Moy-Prasad filtrations to ensure the validity of the Moy-Prasad isomorphism in full generality, cf. \cite[§0.3]{Yu15} and \cite[§13]{KP22}.

 \end{enumerate}
\xrema 

If $G =  \mathrm{GL}_2 /\bbZ_p $, then $G_n (\bbZ_p) =\begin{pmatrix}1+\frakp^n & \frakp^n\\ \frakp^n & 1+ \frakp^n    \end{pmatrix} \subset \mathrm{GL}_2 (\bbZ _p) $ and $\mathfrak{g} _n (\bbZ_p)=\begin{pmatrix}\frakp^n & \frakp^n \\ \frakp^n & \frakp^n  \end{pmatrix} \subset M_2 (\bbZ_p)$ for any $n>0$. The isomorphism $(\star)$ gives us, for pairs $(r,s)$ such that $0 < \frac{r}{2} \leq s \leq r $, isomorphisms
\[ \tag{$ * $} \begin{pmatrix}1+\frakp^s & \frakp^s\\ \frakp^s & 1+ \frakp^s    \end{pmatrix} /  \begin{pmatrix}1+\frakp^r & \frakp^r \\ \frakp^r & 1+ \frakp^r \end{pmatrix} \cong \begin{pmatrix}\frakp^s & \frakp^s \\ \frakp^s &  \frakp^s  \end{pmatrix} /  \begin{pmatrix}\frakp^r & \frakp^r \\ \frakp^r &  \frakp^r \end{pmatrix}.\]  These maps are given by $[1+M] \mapsto [M]$. Using the formula $ [1+M] \mapsto [M] $, it is elementary to check that we have other isomorphisms of abstract groups
 \begin{align*} \tag{$ ** $} \begin{pmatrix}1+\frakp^3 & \frakp^3\\ \frakp^3 & 1+ \frakp^3 \end{pmatrix} /  \begin{pmatrix}1+\frakp^5 & \frakp^6 \\ \frakp^6 & 1+ \frakp^5 \end{pmatrix} \cong  \begin{pmatrix}\frakp^3 & \frakp^3 \\ \frakp^3 &  \frakp^3  \end{pmatrix} /  \begin{pmatrix}\frakp^5 & \frakp^6 \\ \frakp^6 &  \frakp^5 \end{pmatrix},\\
 \tag{$***$}\begin{pmatrix}1+\frakp^3 & \frakp^9\\ \frakp^3 & 1+ \frakp^3 \end{pmatrix} /  \begin{pmatrix}1+\frakp^6 & \frakp^9 \\ \frakp^6 & 1+ \frakp^6 \end{pmatrix} \cong  \begin{pmatrix}\frakp^3 & \frakp^9 \\ \frakp^3 &  \frakp^3  \end{pmatrix} /  \begin{pmatrix}\frakp^6 & \frakp^9 \\ \frakp^6 &  \frakp^6 \end{pmatrix}.
 \end{align*}
These isomorphisms are obtained as follows from the point of view of dilatations.

\theo[\cite{Ma23d}] \label{isocorocongru} (Multi-centered congruent isomorphism)
Let $G$ be a separated and smooth group scheme over $S$. Let $H_0 \subset H_1 \subset \ldots \subset  H_k$ be closed subgroup schemes of $G$ such that $H_i$ is smooth over $S$ for $0 \leq i \leq d$ and $H_0=e_G$. Let $s_0 , s_1 , \ldots , s_k$ and $r_0 , r_1 , \ldots , r_k$ be in $\bbN$ such that  
 \begin{enumerate}
 \item $s_i \geq s_0 $ and $r_i \geq r_0 $ for all $i \in \{0, \ldots , k\}$,
 \item $ r_i \geq s_i $ and $r_i-s_i \leq s_0$ for all $i \in \{0, \ldots , k \}$.
  \end{enumerate}  Assume that $G$ is affine or $\calO $ is local.  Then we have a canonical isomorphism of groups
 \[ \Bl_{H_0, H_1 , \ldots , H_k}^{s_0 ,~ s_1 ,~ \ldots , s_k} G (\calO)/  \Bl_{H_0, H_1 , \ldots , H_k}^{r_0 ,~ r_1 ,~ \ldots , r_k} G  (\calO)  \cong \Lie (\Bl_{H_0, H_1 , \ldots , H_k}^{s_0 ,~ s_1 ,~ \ldots , s_k} G )(\calO) / \Lie ( \Bl_{H_0, H_1 , \ldots , H_k}^{r_0 ,~ r_1 ,~ \ldots , r_k} G   ) (\calO) \]
where $\Bl_{H_0, \ldots , H_k}^{t_0 ,~ \ldots , t_k} G  $ denotes $ \Bl_{H_0,  ~\ldots~ , H_k}^{\calO/\pi ^{t_0}, \ldots ,  \calO /\pi^{t_k}} G $ for any $t_0 , \ldots , t_k \in \bbN$.
\xtheo

Now let $G$ be $ GL_2 /\bbZ _p$. Let $ e_G \subset G$ be the trivial subgroup. Let $T$ be the diagonal split torus in $G$. Let $B$ be the lower triangular Borel in $G$ over $\bbZ_p$. \begin{enumerate} \item The isomorphism $(**)$ above is given by Theorem \ref{isocorocongru} with 
$(\calO , \pi ) =(\bbZ _p,  \mathfrak{p} )$, $H_0=e_G$, $H_1= T$, $s_0 =3, s_1=3, r_0= 5 $ and $r_1=6$.
\item The isomorphism $(***)$ above is given by Theorem \ref{isocorocongru} with 
$(\calO , \pi )= (\bbZ _p,  \mathfrak{p} )$, $H_0=e_G$, $H_1= B$, $s_0 =3, s_1=9, r_0= 6 $ and $r_1=9$.
\end{enumerate}

\rema We comment on Theorem \ref{isocorocongru}.
\begin{enumerate} 
\item Theorem \ref{isocorocongru} corresponds to \cite[Corollary 8.3]{Ma23d}, a slightly more general result is given by \cite[Theorem 8.1]{Ma23d}.
\item The proof of Theorem \ref{isocorocongru} given in \cite{Ma23d} relies on Theorem \ref{moncong} and the study of multi-centered dilatations.

\item  Note that \cite[Lemma 1.3]{Yu01} provides a comparable "multi-centered" isomorphism, in the framework of reductive groups over non-Archimedean local field.
 \end{enumerate} \xrema

 Recall that dilatations of schemes over discrete valuation rings are used in Yu's approach \cite{Yu15} on Bruhat-Tits theory for reductive groups over henselian discrete valuation field with perfect residue field. We refer to the monograph by Kaletha and Prasad \cite{KP22} that include among other things a detailled exposition of \cite{Yu15}. 
 The congruent isomorphism (Theorem \ref{moncong}) and its proof (relying on several results of \cite{MRR20}) are now used as foundation to prove the Moy-Prasad isomorphism for reductive groups mentioned in Remark \ref{rem1iso}, cf. \cite[Theorem 13.5.1 and its proof, Proposition A.5.19 (3) and its proof]{KP22}. As a consequence, dilatations and congruent isomorphisms are now part of the foundation to study admissible representations of reductive $p$-adic groups. Furthermore, other connections between dilatations and groups used to study admissible representations can be found in \cite[§10]{Yu15} and \cite[Example 1.4]{Ma23d}. Reciprocally, the problem of constructing supercuspidal representations of $p$-adic groups (cf. e.g. \cite[Remark/Conclusion]{Ma19o}, \cite{Ma19t, MY22}), or more generally types in the sense of \cite{BK98}, could continue to be a source of inspiration to expend the theory of dilatations.
 
  \rema 
 As we explained before, the book \cite{KP22} provides a carefully written new approach to Bruhat-Tits theory in the case of discrete valuations. This beautiful monograph uses the theory of dilatations to deal with integral models whereas the original Bruhat-Tits theory \cite{BT84} did not. Let us quote \cite[Introduction]{KP22}:
\begin{center}
   ``\textit{Next we
turn to the construction of integral models [...]. Instead of using the approach of Bruhat–Tits via schematic
root data, we employ a simpler and more direct method due to Jiu-Kang Yu
\cite{Yu15}, based on the systematic use of N\'eron dilatations.}''
 \end{center} 
The book \cite{KP22} offers an appendix on dilatations. Though \cite[Appendix A.5]{KP22} takes into account the treatment of dilatations in \cite{MRR20}, it restricts to the framework of discrete valuations.
Originally, Bruhat-Tits theory \cite{BT84} deals also with non discrete valuations, it is natural to ask whether the modern and general approach to dilatations of schemes initiated in \cite{MRR20} could help to provide a more conceptual treatment (in the spirit of \cite{Yu15} and \cite{KP22}) of some parts of \cite{BT84}. Bruhat-Tits theory and dilatations over non discrete valuations were used in \cite{RTW10} and \cite{Ma22} to study Berkovich's point of view \cite[Chap. 5]{Be90} on Bruhat-Tits buildings of reductive groups over discrete and non-discrete valuations (cf. e.g. \cite[1.3.4]{RTW10} for precise assumptions).
 \xrema

\section{Torsors, level structures and shtukas} \label{Torsors.blowup.sec}

\label{TorShtuk}

In this subsection, we explain that many level structures
on moduli stacks of $G$-bundles are encoded in torsors under Néron blowups of $G$ following \cite{MRR20}.
Assume that $X$ is a smooth, projective, geometrically irreducible curve
over a field $k$ with a Cartier divisor $N\subset X$, that $G\to X$ is a
smooth, affine group scheme and that $H\to N$ is a smooth closed subgroup
scheme of $G|_N$. In this case, the N\'eron blowup $\calG \to X$ $(\calG = \Bl_{H}^{N}G) $ is a smooth,
affine group scheme. Let $\Bun_G$ (resp.~$\Bun_\calG$) denote the moduli
stack of $G$-torsors (resp.~$\calG$-torsors) on $X$.  This is a quasi-separated,
smooth algebraic stack locally of finite type over $k$
(cf.~e.g.~\cite[Prop.~1]{He10} or \cite[Thm.~2.5]{AH19}).
Pushforward of torsors along $\calG\to G$ induces a morphism
$\Bun_\calG\to \Bun_G$, $\calE\mapsto \calE\x^\calG G$.
We also consider the stack $\Bun_{(G,H,N)}$ of $G$-torsors on $X$ with level-($H$,$N$)-structures, cf. \cite[Definition 4.5]{MRR20}.
Its $k$-points parametrize pairs $(\calE,\be)$ consisting of a $G$-torsor $\calE\to X$ and a section $\be$ of the fppf quotient $(\calE|_{N}/H)\to N$, i.e., $\be$ is a reduction of $\calE|_N$ to an $H$-torsor.

\prop[\cite{MRR20}]  \label{torsorprop}
 There is an equivalence of $k$-stacks
\[
\Bun_\calG\;\overset{\cong}{\longto}\;\Bun_{(G,H,N)},\;\; \calE\longmapsto (\calE\x^\calG G,\, \be_{\on{can}}),
\]
where $\be_{\on{can}}$ denotes the canonical reduction induced from the factorization $\calG|_N\to H\subset G|_N$. 
\xprop

Thus, many level structures are encoded in torsors under N\'eron blowups.
This construction is also compatible with the adelic viewpoint as follows. Let $|X|\subset X$ be the set of closed points, and let $\eta \in X$ be the generic point.
We denote by $F=\kappa(\eta)$ the function field of $X$.
For each $x\in |X|$, we let $\calO_x$ be the completed local ring at $x$ with fraction field $F_x$ and residue field $\kappa(x)=\calO_x/\frakm_x$. 
Let $\bbA:=\bigsqcap'_{x\in |X|}F_x$ be the ring of adeles with subring of integral elements $\bbO=\bigsqcap_{x\in |X|}\calO_x$.

\prop
Assume either that $k$ is a finite field and $G\to X$ has connected fibers, or that $k$ is a separably closed field. The N\'eron blowup $\calG\to X$ is smooth, affine with connected fibers, and there is a commutative diagram of groupoids
\[
\begin{tikzpicture}[baseline=(current  bounding  box.center)]
\matrix(a)[matrix of math nodes, 
row sep=1.5em, column sep=2em, 
text height=1.5ex, text depth=0.45ex] 
{ 
\Bun_\calG(k)& \bigsqcup_\ga G_\ga(F)\big\bsl\big(G_\ga(\bbA)/\calG(\bbO )\big) \\ 
\Bun_G(k)& \bigsqcup_\ga G_\ga(F)\big\bsl\big(G_\ga(\bbA)/G(\bbO )\big),\\}; 
\path[->](a-1-1) edge node[above] {$\simeq$} (a-1-2);
\path[->](a-2-1) edge node[above] {$\simeq$} (a-2-2);
\path[->](a-1-1) edge node[right] {} (a-2-1);
\path[->](a-1-2) edge node[right] {} (a-2-2);
\end{tikzpicture}
\] 
identifying the vertical maps as the level maps.
\xprop

Now assume that $k$ is a finite field. As a consequence of Proposition \ref{torsorprop} one naturally obtains integral models for moduli stacks
of $G$-shtukas on $X$ with level structures over $N$ via an isomorphism $\Sht_{\calG,I_\bullet}\;\overset{\cong}{\longto}\;\Sht_{(G,H,N),I_\bullet}$ (cf. \cite[§4.2.2]{MRR20} for precise definitions and details).

\section{\label{Topol}The \textquotedblleft topology\textquotedblright
of dilatations of affine schemes}

\subsection{\label{subsec:classical-top}Constructing smooth complex affine varieties
with controlled topology}

Dilatations have played an important role in complex affine algebraic
geometry during the nineties in connection to the construction and
study of exotic complex affine spaces \cite{KZ99,Zai00}, that is,
smooth algebraic $\mathbb{C}$-varieties $X$ of dimension $n$ whose
analytifications $X^{\mathrm{an}}$ are homeomorphic to the Euclidean
space $\mathbb{R}^{2n}$ endowed with its standard structure of topological
manifold but which are not isomorphic to the affine space $\mathbb{A}_{\mathbb{C}}^{n}$
as $\mathbb{C}$-varieties. 

In this context, dilatations appeared under the name \emph{affine
modifications} and were used as a powerful tool to produce from a
given smooth complex affine variety $X$ a new smooth complex affine
variety $X'=\mathrm{Bl}_{Z}^{D}X$ for which the homology or homotopy
type of the underlying topological manifold of the analytification
of $X'$ can be determined under suitable hypotheses in terms of those
of $X$ and of the center $\{[Z,D]\}$ of the dilatation. 

\vspace{0.1cm}

The study of the strong topology of affine modifications was initiated
in this context mainly by Kaliman through an analytic counterpart
of the notion of dilatation: 

\begin{dfn}[\cite{Ka94}]
Given a triple $(M,H,C)$ consisting of a complex analytic manifold
$M$, a closed submanifold $C$ of $M$ of codimension at least $2$
and a complex analytic hypersurface $H$ of $M$ containing $C$ in
its smooth locus, the \emph{Kaliman modification of $M$ along $H$ with
center at $C$} is the complex analytic manifold defined as the complement
$M'$ of the proper transform $H'$ of $H$ in the blow-up $\sigma_{C}:\hat{M}\to M$
of $M$ with center at $C$. 
\end{dfn}

In the case where $(M,H,C)$ is the analytification of a triple $(X,D,Z)$
consisting of a smooth algebraic $\mathbb{C}$-variety $X$, a smooth
algebraic sub-variety $Z$ of $X$ of codimenion at least two and
of a reduced effective Cartier divisor $D$ on $X$ containing $Z$
in its regular locus, the Kaliman modification of $(M,H,C)$ coincides
with the analytification of the dilatation $X'=\mathrm{Bl}_{Z}^{D}X$
of $X$ with center $\{[Z,D]\}$ of Section \ref{333}. 

\vspace{0.1cm}

Kaliman and Zaidenberg \cite{KZ99} developed a series of tools to
describe the topology of the analytifications of affine modifications
of smooth affine $\mathbb{C}$-varieties along principal divisors
$D$ with non-necessarily smooth centers. One of these provides in
particular a control on the preservation of the topology of the analytification
under affine modifications:
\begin{thm}[{\cite[Proposition 3.1 and Theorem 3.1]{KZ99}}]
\label{thm:Kaliman-Zaidenberg-MainThm} Let $X$ be a smooth connected affine
$\mathbb{C}$-variety and let $\{[Z,D]\}$ be a center on $X$ consisting
of a closed sub-scheme $Z$ of codimension at least $2$ and of a principal effective
divisor $D$ containing $Z$ as a closed subscheme. Let $\sigma:\tilde{X}=\mathrm{Bl}_{Z}^{D}(X)\to X$
be the dilatation of $X$ with center $\{[Z,D]\}$ and let $E$ be
its exceptional divisor. 

\noindent Assume that the following conditions are satisfied:

(i) The $\mathbb{C}$-variety $\tilde{X}=\mathrm{Bl}_{Z}^{D}(X)$
is smooth;

(ii) The divisors $E$ and $D$ are irreducible, $E=\sigma^{*}D$,
and the analytifications of $E_{\mathrm{red}}$ and $D_{\mathrm{red}}$
are topological manifolds.

\noindent Then the following properties hold:

(a) The homomorphism $\sigma_{*}^{\mathrm{an}}:\pi_{1}(\tilde{X}^{\mathrm{an}})\to\pi_{1}(X^{\mathrm{an}})$
induced by $\sigma^{\mathrm{an}}$ is an isomorphism;

(b) The homomorphism $\sigma_{*}^{\mathrm{an}}:H_{*}(\tilde{X}^{\mathrm{an}};\mathbb{Z})\to H_{*}(X^{\mathrm{an}};\mathbb{Z})$
induced by $\sigma^{\mathrm{an}}$ is an isomorphism if and only if
the homomorphism $\sigma|_{E,*}^{\mathrm{an}}:H_{*}(E_{\mathrm{red}}^{\mathrm{an}};\mathbb{Z})\to H_{*}(D_{\mathrm{red}}^{\mathrm{an}},\mathbb{Z})$
is. 
\end{thm}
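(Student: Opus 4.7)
The plan is to exploit the immediate observation that $\sigma$ restricts to an algebraic isomorphism $\tilde X \setminus E \to X \setminus D$, and hence to an analytic isomorphism on the complement of the exceptional/center loci. Every obstruction to $\sigma^{\mathrm{an}}_*$ being an isomorphism in homotopy or homology therefore lives inside tubular neighborhoods of $D^{\mathrm{an}}_{\mathrm{red}}$ and $E^{\mathrm{an}}_{\mathrm{red}}$, which exist by assumption (ii) (these analytic sets being topological manifolds) together with the smoothness of $X^{\mathrm{an}}$ and $\tilde X^{\mathrm{an}}$ from (i). First I pick an open neighborhood $V\subset X^{\mathrm{an}}$ of $D^{\mathrm{an}}_{\mathrm{red}}$ admitting a deformation retraction onto $D^{\mathrm{an}}_{\mathrm{red}}$; by properness of $\sigma$ and shrinking $V$, I arrange that $\tilde V := (\sigma^{\mathrm{an}})^{-1}(V)$ is itself a tubular neighborhood of $E^{\mathrm{an}}_{\mathrm{red}}$ deformation retracting onto it, in a manner compatible with $\sigma|_{E}^{\mathrm{an}}$ on the zero sections.

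For part (a), I apply Seifert--van Kampen to the covers $X^{\mathrm{an}} = (X^{\mathrm{an}}\setminus D^{\mathrm{an}}) \cup V$ and $\tilde X^{\mathrm{an}} = (\tilde X^{\mathrm{an}}\setminus E^{\mathrm{an}}) \cup \tilde V$. The standard fact that, for an irreducible hypersurface whose reduced analytification is a topological manifold, the fundamental group of the ambient space is obtained from the fundamental group of its complement by killing the normal closure of any meridian, reduces (a) to showing that the isomorphism $\pi_1(\tilde X^{\mathrm{an}}\setminus E^{\mathrm{an}}) \cong \pi_1(X^{\mathrm{an}}\setminus D^{\mathrm{an}})$ induced by $\sigma$ matches the meridian class of $E$ with that of $D$. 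This is precisely where the hypothesis $E = \sigma^* D$ is decisive: at a smooth point of $D$ lying off $Z$ (which exists since $Z$ has codimension $\geq 2$ in $X$ whereas $D$ has codimension $1$), $\sigma$ is a local isomorphism, and the equality of divisors $E = \sigma^* D$ forces the local multiplicity to be $1$, so that a meridian of $D$ lifts to a meridian of $E$ (not to a multiple). Irreducibility of both $E$ and $D$ ensures any two such meridians are conjugate, so a single generator controls the normal closure on each side.

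For part (b), I compare the Mayer--Vietoris sequences of the two covers in a commuting ladder
\begin{equation*}
\begin{tikzcd}[column sep=15pt]
\cdots \ar[r] & H_n(\tilde V\setminus E) \ar[r]\ar[d,"\cong"] & H_n(\tilde X\setminus E)\oplus H_n(\tilde V) \ar[r]\ar[d] & H_n(\tilde X) \ar[r]\ar[d] & H_{n-1}(\tilde V\setminus E) \ar[r]\ar[d,"\cong"] & \cdots \\
\cdots \ar[r] & H_n(V\setminus D) \ar[r] & H_n(X\setminus D)\oplus H_n(V) \ar[r] & H_n(X) \ar[r] & H_{n-1}(V\setminus D) \ar[r] & \cdots
\end{tikzcd}
\end{equation*}
in which the vertical maps on $H_*(\tilde V\setminus E)$ and on the first summand $H_*(\tilde X\setminus E)$ are isomorphisms (because $\sigma^{\mathrm{an}}$ is an isomorphism off $E$), and in which, via the deformation retractions $V\simeq D^{\mathrm{an}}_{\mathrm{red}}$ and $\tilde V\simeq E^{\mathrm{an}}_{\mathrm{red}}$, the vertical map on the second summand $H_n(\tilde V)\to H_n(V)$ is identified with $\sigma|_{E,*}^{\mathrm{an}}$. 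The five-lemma, applied in the forward direction, shows that if $\sigma|_{E,*}^{\mathrm{an}}$ is an isomorphism then so is $\sigma_*^{\mathrm{an}}$ on $H_*$; applied in the other direction (using that four of five consecutive vertical arrows are isomorphisms at each spot), it yields the converse.

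The main technical obstacle is verifying that $\tilde V := (\sigma^{\mathrm{an}})^{-1}(V)$ can genuinely be arranged to deformation retract onto $E^{\mathrm{an}}_{\mathrm{red}}$ compatibly with the retraction of $V$ onto $D^{\mathrm{an}}_{\mathrm{red}}$: this is where assumptions (i) and (ii) are jointly used, since one needs a topological tubular structure on $\tilde V$ that is not guaranteed by the algebraic smoothness of $\tilde X$ alone (the exceptional divisor $E$ need not be smooth even when $\tilde X$ is). The only other delicate point is the meridian matching in (a), which, as explained, rests crucially on the multiplicity-one condition $E = \sigma^*D$ built into hypothesis (ii).
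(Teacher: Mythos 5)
The paper itself gives no proof of Theorem \ref{thm:Kaliman-Zaidenberg-MainThm}: being a survey, it cites \cite[Proposition 3.1 and Theorem 3.1]{KZ99}, so your proposal must be measured against the argument there. Your skeleton --- use that $\sigma$ is an isomorphism off the divisors, treat $\pi_1$ by killing meridians and $H_*$ by a ladder of exact sequences plus the five lemma --- is indeed the Kaliman--Zaidenberg strategy, but two load-bearing steps are wrong or unsupported. The most serious is the meridian matching in (a). You claim that at a smooth point of $D$ off $Z$ the map $\sigma$ is a local isomorphism, and that $E=\sigma^{*}D$ then forces multiplicity one there. In fact $\sigma^{-1}(D\setminus Z)=\varnothing$: by Proposition \ref{blow.up.Cartier.lemm} and Remark \ref{remaYZYZYZ}, the exceptional divisor satisfies $E=\sigma^{-1}(D)=\sigma^{-1}(Z)$ and lies entirely over $Z$, so $\sigma|_{E}$ factors through $Z\subset D$ and $\tilde X$ has no points at all over $D\setminus Z$. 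A meridian of $D$ based at such a point lifts to a loop of $\tilde X\setminus E$ that is in no visible sense a meridian of $E$, and a meridian of $E$ maps to a small loop near a point of $Z$; relating the two conjugacy classes inside $\pi_{1}(X^{\mathrm{an}}\setminus D)\cong\pi_{1}(\tilde X^{\mathrm{an}}\setminus E)$ is precisely the content of \cite[Prop.~3.1]{KZ99} (a linking-number computation at a general smooth point of $E$, using $E=\sigma^{*}D$ to see that a defining function of $D$ pulls back to a function vanishing to order one along $E$). That computation is absent from your argument, so (a) is not proved.

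For (b), the phrase ``by properness of $\sigma$'' is not available: dilatations are affine and essentially never proper (e.g.\ $\mathrm{Bl}_{0}^{D}\mathbb{A}^{2}\to\mathbb{A}^{2}$ has a whole affine line over the origin). As a consequence $\{\sigma^{-1}(V)\}$ is not a neighborhood basis of $E$ as $V$ shrinks, and there is no a priori reason that $\sigma^{-1}(V)$ deformation retracts onto $E_{\mathrm{red}}^{\mathrm{an}}$; you correctly identify this as the main obstacle but do not resolve it, and it is exactly the point where hypothesis (ii) must be used. The standard repair, and essentially the route of \cite{KZ99}, is to drop tubular neighborhoods and Mayer--Vietoris in favour of the ladder of long exact sequences of the pairs $(\tilde X^{\mathrm{an}},\tilde X^{\mathrm{an}}\setminus E)$ and $(X^{\mathrm{an}},X^{\mathrm{an}}\setminus D)$, identifying the relative groups via Alexander--Lefschetz duality $H_{k}(M,M\setminus A)\cong\check H^{2n-k}_{c}(A)$ followed by Poincar\'e duality on the oriented topological manifolds $E_{\mathrm{red}}^{\mathrm{an}}$ and $D_{\mathrm{red}}^{\mathrm{an}}$, yielding $H_{k}(X^{\mathrm{an}},X^{\mathrm{an}}\setminus D)\cong H_{k-2}(D_{\mathrm{red}}^{\mathrm{an}})$ naturally. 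Once those identifications are in place, your five-lemma bookkeeping (including the observation that a direct sum $f\oplus g$ with $f$ an isomorphism is an isomorphism iff $g$ is) is correct for both directions of (b). As it stands, however, the proposal establishes neither the meridian comparison nor the identification of the relative (or neighborhood) homology with $H_{*}$ of the divisors, and these are the two places where the hypotheses $E=\sigma^{*}D$ and ``$E_{\mathrm{red}}^{\mathrm{an}}$, $D_{\mathrm{red}}^{\mathrm{an}}$ are topological manifolds'' actually do their work.
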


Combining the above theorem with basic algebraic topology results (the fact that smooth complex affine algebraic varieties are homotopy equivalent to CW complexes \cite{AF59} and the Hurewicz and Whitehead theorems, see e.g. Theorem 4.32 and Theorem 4.5 in \cite{HaTop}),  the following can be obtained. 

\begin{cor}
\label{cor:Main-cor-Top} Under the assumptions of Theorem \ref{thm:Kaliman-Zaidenberg-MainThm},
assume in addition that $X^{\mathrm{an}}$ is a contractible smooth manifold,
that $Z_{\mathrm{red}}^{\mathrm{an}}$ is a topological sub-manifold of $D_{\mathrm{red}}^{\mathrm{an}}$ and
that the homomorphism $j_{*}^{\mathrm{an}}:H_{*}(Z_{\mathrm{red}}^{\mathrm{an}};\mathbb{Z})\to H_{*}(D_{\mathrm{red}}^{\mathrm{an}};\mathbb{Z})$
induced by the closed immersion $j:Z\hookrightarrow D$ is an isomorphisms. Then the analytification of $\tilde{X}=\mathrm{Bl}_{Z}^{D}(X)$ is
a contractible smooth manifold.
\end{cor}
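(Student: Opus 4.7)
The plan is to apply Theorem~\ref{thm:Kaliman-Zaidenberg-MainThm} to transfer topological information from $X^{\mathrm{an}}$ to $\tilde{X}^{\mathrm{an}}$, then close the argument with the Hurewicz and Whitehead theorems (as suggested in the paragraph preceding the corollary). The hypotheses of that theorem are assumed in the corollary, so parts (a) and (b) may be invoked directly; what remains is to (i) exploit the contractibility of $X^{\mathrm{an}}$ in the source and (ii) check that the hypothesis on $j_*^{\mathrm{an}}$ translates into the hypothesis of part (b) about $\sigma|_{E}^{\mathrm{an}}$.

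For simple connectivity, part (a) of Theorem~\ref{thm:Kaliman-Zaidenberg-MainThm} gives $\sigma_{*}^{\mathrm{an}}\colon \pi_1(\tilde{X}^{\mathrm{an}})\xrightarrow{\sim}\pi_1(X^{\mathrm{an}})$; since $X^{\mathrm{an}}$ is contractible, $\pi_1(\tilde{X}^{\mathrm{an}})=0$.

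For the homology calculation, I would use Proposition~\ref{blow.up.Cartier.lemm}, which identifies $E=\tilde{X}\times_X D$ with $\tilde{X}\times_X Z$, together with Proposition~\ref{exceptional divisor}, which (locally over $Z$) identifies $E\to Z$ with a complex affine bundle. Passing to analytifications, $E^{\mathrm{an}}\to Z^{\mathrm{an}}$ is a locally trivial fiber bundle with contractible fibers $\mathbb{C}^{k}$ and hence a homotopy equivalence; the same then holds for $E^{\mathrm{an}}_{\mathrm{red}}\to Z^{\mathrm{an}}_{\mathrm{red}}$. Factoring the restriction $\sigma|_{E}^{\mathrm{an}}\colon E^{\mathrm{an}}_{\mathrm{red}}\to D^{\mathrm{an}}_{\mathrm{red}}$ as $E^{\mathrm{an}}_{\mathrm{red}}\to Z^{\mathrm{an}}_{\mathrm{red}}\hookrightarrow D^{\mathrm{an}}_{\mathrm{red}}$ and combining the homology equivalence of the first map with the assumed isomorphism $j_*^{\mathrm{an}}$ yields that $\sigma|_{E}^{\mathrm{an}}$ is an isomorphism on integral homology. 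By Theorem~\ref{thm:Kaliman-Zaidenberg-MainThm}(b), $\sigma_*^{\mathrm{an}}\colon H_*(\tilde{X}^{\mathrm{an}};\mathbb{Z})\to H_*(X^{\mathrm{an}};\mathbb{Z})$ is an isomorphism, so $\tilde{X}^{\mathrm{an}}$ has the integral homology of a point.

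Having shown $\tilde{X}^{\mathrm{an}}$ is simply connected with trivial reduced homology, I would conclude as follows. The analytification of a smooth complex affine algebraic variety has the homotopy type of a CW complex by \cite{AF59}; the relative Hurewicz theorem applied inductively gives $\pi_n(\tilde{X}^{\mathrm{an}})=0$ for all $n\geq 1$, and then Whitehead's theorem upgrades the constant map $\tilde{X}^{\mathrm{an}}\to\mathrm{pt}$ from a weak equivalence to a homotopy equivalence. Thus $\tilde{X}^{\mathrm{an}}$ is contractible (and smooth as a manifold since $\tilde{X}$ is smooth as a $\mathbb{C}$-variety).

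The main delicate point is the identification of $E\to Z$ as an affine bundle via Proposition~\ref{exceptional divisor}; strictly speaking that proposition requires $Z\subset D$ to be a regular immersion, which is not stated explicitly among the hypotheses of the corollary but is compatible with the manifold and sub-manifold hypotheses on $D^{\mathrm{an}}_{\mathrm{red}}$ and $Z^{\mathrm{an}}_{\mathrm{red}}$ together with the smoothness of $\tilde{X}$. If regularity of $Z\hookrightarrow D$ is not available, one must replace this step by a direct analytic argument showing that $\sigma|_{E}^{\mathrm{an}}\colon E^{\mathrm{an}}_{\mathrm{red}}\to D^{\mathrm{an}}_{\mathrm{red}}$ induces an isomorphism on homology, using the manifold structures and the factorization through $Z^{\mathrm{an}}_{\mathrm{red}}$; this is the only subtle point of the argument.
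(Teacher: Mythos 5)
Your proposal is correct and follows essentially the same route as the paper: deduce simple connectivity from part (a), show $E^{\mathrm{an}}_{\mathrm{red}}\to Z^{\mathrm{an}}_{\mathrm{red}}$ is a homotopy equivalence so that the hypothesis on $j_*^{\mathrm{an}}$ feeds into part (b), and finish with the CW structure from \cite{AF59} plus Hurewicz and Whitehead. The delicate point you flag is also the one the paper addresses: its proof invokes Proposition~\ref{exceptional divisor}(2) only in the regular (or local complete intersection) case and otherwise simply asserts that $E^{\mathrm{an}}_{\mathrm{red}}\to Z^{\mathrm{an}}_{\mathrm{red}}$ remains a \emph{topological} vector bundle under the weaker hypotheses of the statement, which is exactly the fallback you describe.
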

\begin{proof} Once in possession of the aforementioned results of algebraic topology, the assertion readilly follows from the fact that the induced continuous map $E^{\mathrm{an}}_{\mathrm{red}}\to Z^{\mathrm{an}}_{\mathrm{red}}$ is a topological vector bundle.  If $Z$ were to be smooth or more generally a local complete intersection in $D$, this is a consequence of Proposition \ref{exceptional divisor}(2) which asserts the stronger property that $E\to Z$ is an algebraic vector bundle, but this remains true under the weaker assumptions made in the statement. 
\end{proof}

Having the flexibility to use as centers or divisors of modifications
schemes which are either non-reduced or whose analytifications are
not necessarily smooth manifolds but only topological manifolds is
particularly relevant for applications to the construction smooth
$\mathbb{C}$-varieties with contractible analytifications, as illustrated
by the following examples.
\begin{ex}[The tom Dieck - Petrie surfaces]
\label{exa:.-tom-Dieck-Petrie-surfaces} Let $p,q\geq2$ be a pair
of relatively prime integers and let $C_{p,q}\subset\mathbb{A}_{\mathbb{C}}^{2}=\mathrm{Spec}(\mathbb{C}[x,y])$
be an irreducible rational cuspidal curve with equation $x^{p}-y^{q}=0$.
The underlying topological space of $C_{p,q}^{\mathrm{an}}$ is a
contractible real topological surface, and hence, Corollary \ref{cor:Main-cor-Top}
applies to conclude that the analytification of the dilatation $S_{p,q}=\mathrm{Bl}_{(1,1)}^{C_{p,q}}\mathbb{A}_{\mathbb{C}}^{2}$
of $\mathbb{A}_{\mathbb{C}}^{2}$ along the principal Cartier divisor
$D=C_{p,q}$ with center at the closed point $Z=(1,1)\in C_{p,q}$
is a smooth contractible real $4$-manifold. The smooth affine surface
$S_{p,q}$, which can be described explicitly as the hypersurface
in $\mathbb{A}_{\mathbb{C}}^{3}=\mathrm{Spec}(\mathbb{C}[x,y,z])$
with equation 
\[
\frac{(xz+1)^{p}-(yz+1)^{q}}{z}=1,
\]
is not isomorphic to $\mathbb{A}_{\mathbb{C}}^{2}$ since, for instance,
it has non negative logarithmic Kodaira dimension \cite[Example 2.4]{Zai00}.
Moreover, the underlying real $4$-manifold of $S_{p,q}^{\mathrm{an}}$
is an example of a contractible $4$-manifold with non-trivial fundamental
group at infinity, hence non-homeomorphic to the standard euclidean
space $\mathbb{R}^{4}$. 
\end{ex}

\begin{ex}[Some Koras-Russell threefolds]
\label{exa:-KR-threefolds} Let again $p,q\geq2$ be a pair of relative
prime integers and consider for every $n\geq2$ the smooth hypersurface
$X_{p,q,n}$ in $\mathbb{A}_{\mathbb{C}}^{4}=\mathrm{Spec}(\mathbb{C}[x,y,z,w])$
with equation 
\[
x^{n}y+z^{p}+w^{q}+x=0.
\]
The restriction $\sigma_{p,q,n}:X_{p,q,n}\to\mathbb{A}_{\mathbb{C}}^{3}$
of the projection to the coordinates $x$, $z$ and $w$ expresses
$X_{p,q,n}$ as the dilatation of $\mathbb{A}_{\mathbb{C}}^{3}$ along the
principal divisor $D_{n}=\mathrm{div}x^{n}$ and with center at the
non-reduced closed sub-scheme $Z=Z_{p,q,n}$ of codimension $2$ with defining
ideal $$I_{p,q,n}=(z^{p}+w^{q}+x,x^{n})\subset\mathbb{C}[x,z,w].$$
The analytification of $Z_{\mathrm{red}}$ is a topological manifold
homeomorphic to the underlying topological space of the curve $C_{p,q}^{\mathrm{an}}$
of the previous example. Thus, Corollary \ref{cor:Main-cor-Top} applies
to conclude that $X_{p,q,n}^{\mathrm{an}}$ is a contractible real
$6$-manifold, hence, by a result of Dimca--Ramanujam, is diffeomorphic
to the standard euclidean space $\mathbb{R}^{6}$, see \cite[Theorem 3.2]{Zai00}. 

The interest in these affine threefolds $X_{p,q,n}$ was motivated
in the nineties by their appearance in the course of the study of
the linearization problem for actions of the multiplicative group $\mathbb{G}_{m,\mathbb{C}}$ on $\mathbb{A}_{\mathbb{C}}^{3}$
by Koras and Russell \cite{KR97}. One crucial question at that time
was to decide whether these threefolds were isomorphic to $\mathbb{A}_{\mathbb{C}}^{3}$
or not. The fact that none of them is isomorphic to $\mathbb{A}_{\mathbb{C}}^{3}$
was finally established by Makar-Limanov \cite{ML96} by commutative
algebra techniques. An interesting by-product of his proof is that
the dilatation morphism $\sigma_{p,q,n}:X_{p,q,n}\to\mathbb{A}_{\mathbb{C}}^{3}$
is equivariant with respect to the natural action of the group of
$\mathbb{C}$-automorphism of $X_{p,q,n}$, more precisely, $\sigma_{p,q,n}$
induces an isomorphism 
\[
\sigma_{p,q,n}^{*}:\mathrm{Aut}_{\mathbb{C}}(\mathbb{A}_{\mathbb{C}}^{3},\{[Z_{p,q,n},D_{n}]\})\to\mathrm{Aut}_{\mathbb{C}}(X_{p,q,n})
\]
between the subgroup $\mathrm{Aut}_{\mathbb{C}}(\mathbb{A}_{\mathbb{C}}^{3},\{[Z_{p,q,n},D_{n}]\})$
of $\mathrm{Aut}_{\mathbb{C}}(\mathbb{A}_{\mathbb{C}}^{3})$ consisting
of $\mathbb{C}$-automorphisms preserving the divisor and the center
of the dilatation $\sigma_{p,q,n}$ and the group $\mathrm{Aut}_{\mathbb{C}}(X_{p,q,n})$,
see \cite{MJ11}. 
\end{ex}

\subsection{Deformation to the normal cone}
\label{subsec:DefSpace}
A very natural class of dilatations which plays a fundamental role
in intersection theory is given by the affine version of the deformation
space $D(X,Y)$ of a closed immersion $Y\hookrightarrow X$ of schemes
of finite type over a fixed base scheme $S$ to its normal cone, \cite{Ful98},
\cite[\S 10]{Ro96}. Indeed, $D(X,Y)$ is simply the dilatation of
$X\times_{S}\mathbb{A}_{S}^{1}$ with divisor $D=X\times_{S}\{0\}_{S}$,
where $\{0\}_{S}$ denotes the zero section, and center $Z=Y\times_{S}\{0\}_S$.
In the affine setting, say $X=\mathrm{Spec}(A)$ and $Y=\mathrm{Spec}(A/I)$
for some ideal $I\subset A$, $D(X,Y)$ is the spectrum of the sub-algebra
$A[t]$-algebra
\[
A[\frac{(I,t)}{t}]\cong\sum_{n}I^{n}t^{-n}\subset A[t,t^{-1}].
\]
The composition $f:D(X,Y)\to\mathbb{A}_{S}^{1}$ of the dilatation
morphism $\sigma:D(X,Y)\to X\times_{S}\mathbb{A}_{S}^{1}$ with the
projection $\mathrm{p}_{2}:X\times_{S}\mathbb{A}_{S}^{1}\to\mathbb{A}_{S}^{1}$
is a flat morphism restricting to the trivial bundle $X\times_{S}(\mathbb{A}_{S}^{1}\setminus\{0\}_{S})$
over $\mathbb{A}_{S}^{1}\setminus\{0\}_{S}$ and whose fiber over
$\{0\}_{S}$ equals the normal cone $N_{Y/X}=\mathrm{Spec}(\bigoplus_{n\geq0}I^{n}/I^{n+1})$
of the closed embedding $Y\hookrightarrow X$ (see Proposition \ref{exceptional divisor}).
For regular immersions $Y\hookrightarrow X$ between smooth schemes
of dimension $n$ and $m$ over a field $k$, the deformation space
$D(X,Y)$ \'etale locally looks like the deformation space 
\[
\mathrm{Spec}(k[x_{1},\ldots,x_{m}][t][u_{1},\ldots,u_{m-n}]/(tu_{i}-x_{i})_{i=1,\ldots,m-n})\cong\mathbb{A}_{k}^{m+1}
\]
of the immersion of $\mathbb{A}_{k}^{n}$ as the linear subspace $\{x_{1}=\ldots=x_{m-n}=0\}$
of $\mathbb{A}_{k}^{m}=\mathrm{Spec}(k[x_{1},\ldots,x_{m}])$.\\

Deformation spaces of closed immersions between smooth affine $\mathbb{C}$-varieties
are a rich source of smooth affine $\mathbb{C}$-varieties
whose analytifications are contractible smooth manifolds: 
\begin{ex}
Given a smooth affine $\mathbb{C}$-variety $X$ such that $X^{\mathrm{an}}$
is contractible and a smooth subvariety $Y\subset X$ such that the
induced inclusion $Y^{\mathrm{an}}\subset X^{\mathrm{an}}$ is a topological
homotopy equivalence, Theorem \ref{thm:Kaliman-Zaidenberg-MainThm}
implies that the analytification of the deformation space $D(X,Y)$
is a contractible smooth manifold. 

For instance, the deformation spaces $D(\mathbb{A}_{\mathbb{C}}^{3},S_{p,q})\subset\mathbb{A}_{\mathbb{C}}^{5}$
of the tom Dieck - Petrie surfaces $S_{p,q}$ of Example \ref{exa:.-tom-Dieck-Petrie-surfaces}
are smooth affine $\mathbb{C}$-varieties of dimension $4$ whose
analytifications are all diffeomorphic to $\mathbb{R}^{8}$. In the
same way, for every Koras-Russell threefold $X_{p,q,n}\subset\mathbb{A}_{\mathbb{C}}^{4}$
in Example \ref{exa:-KR-threefolds}, the deformation space 
\[
D(\mathbb{A}_{\mathbb{C}}^{4},X_{p,q,n})\cong\{tu=x^{n}y+z^{p}+w^{q}+x\}\subset\mathbb{A}_{\mathbb{C}}^{6}
\]
is a smooth affine $\mathbb{C}$-variety whose analytification is
diffeomorphic to $\mathbb{R}^{10}$. It is not known whether these
deformation spaces are algebraically isomorphic to affine spaces. 
\end{ex}

Another technique, first introduced in a local differential or complex analytic setting by tom Dieck \cite{tD92} under the name \emph{hyperbolic modification}, 
and developped further in the algebraic context by tom Dieck \cite{tD92} and Kaliman-Zaidenberg, see e.g.  \cite[$\S$ 4.3]{Zai00}, allows the construction of
many new classes of smooth complex algebraic varieties whose analytifications are contractible manifolds. A special type of such hyperbolic modifications
can be conveniently rephrased in terms of a two-step dilatation construction which underlines its relation to deformation spaces:

\begin{ex}[Hyperbolic modifications as dilatations] Let $X$ be a smooth complex algebraic variety, let $i:H\hookrightarrow X$ be a prime Cartier divisor on $X$ and let $Z$ be an integral closed sub-scheme of $X$ contained in $H$.

 Let $f:D(X,H)\to \mathbb{A}^1_{\mathbb{C}}$ be the deformation space of $H$ in $X$ and let $\hat{j}:H\times_\mathbb{C}\mathbb{A}^1_{\mathbb{C}} \hookrightarrow D(X,H)$ be the canonical closed immersion of schemes over $\mathbb{A}^1_{\mathbb{C}}$ whose restriction over $\mathbb{A}^1_{\mathbb{C}}\setminus\{0\}$ is the product $j\times\mathrm{id}:H\times_{\mathbb{C}} (\mathbb{A}^1_{\mathbb{C}}\setminus\{0\})\hookrightarrow  X\times_{\mathbb{C}} (\mathbb{A}^1_{\mathbb{C}}\setminus\{0\})\cong D(X,H)|_{(\mathbb{A}^1_{\mathbb{C}}\setminus\{0\}))}$ and whose restriction over $\{0\}$ is the inclusion of $H$ as the zero section of the normal bundle $N_{H/X} \cong D(X,H)|_{\{0\}}$ of $H$ in $X$. Viewing $Z\times_\mathbb{C} \{0\} \subset H\times_{\mathbb{C}} \mathbb{A}^1_{\mathbb{C}}$ and $H\times_{\mathbb{C}} \mathbb{A}^1_{\mathbb{C}}$ as closed sub-schemes of $D(X,H)$ via $\hat{j}$, the \emph{hyperbolic modification of $X$ with locus $(Z,H)$} is then defined as the dilatation $h:\hat{X}\to D(X,H)$ with divisor $H\times_{\mathbb{C}} \mathbb{A}^1_{\mathbb{C}}$ and center $Z\times_\mathbb{C} \{0\}$. 
 
 The composition $\hat{f}=f\circ h: \hat{X}\to \mathbb{A}^1_\mathbb{C}$ is a flat morphism whose restriction over  $\mathbb{A}^1_{\mathbb{C}}\setminus\{0\}$ is isomorphic to the product $\mathrm{Bl}_Z^{H} X \times_{\mathbb{C}} (\mathbb{A}^1_{\mathbb{C}}\setminus\{0\})$ and whose fiber over $\{0\}$ is isomorphic to $\mathrm{Bl}_Z^H(N_H/X)$, where $H\subset N_H/X$ is viewed as the zero section of $N_H/X$. In particular, in the case where $N_{H/X}$ is the trivial line bundle, $\hat{f}^{-1}(\{0\})$ is isomorphic to $D(H,Z)$ and the morphism $\hat{f}:\hat{X}\to \mathbb{A}^1_{\mathbb{C}}$ provides a flat deformation of $\mathrm{Bl}_Z^H(X)$ to $D(H,Z)$. 
 
 Now assume that $X^{\mathrm{an}}$, $H^{\mathrm{an}}$ and $Z^{\mathrm{an}}$ are contractible topological manifolds and that $Z$ is contained in the regular locus of $H$. Appealing twice to Corollary \ref{cor:Main-cor-Top} yields the conclusion that $D(X,H)$ and then $\hat{X}$ are smooth varieties whose analytifications are contractible manifolds. In the same way, $\mathrm{Bl}_Z^{H} X$ is a smooth variety whose analytification is contractible. The variety $\mathrm{Bl}_Z^H(N_H/X)$ is readily verified to be smooth if and only if $H$ is smooth, the underlying topological space of its analytification being in any case contractible. In sum, $\hat{f}^{\mathrm{an}}:\hat{X}^{\mathrm{an}}\to \mathbb{C}$ can be interpreted as providing a foliation with contractible leaves of the contractible manifold $\hat{X}^{\mathrm{an}}$.  
 
 In \cite{tD92, Zai00}, this construction is mainly considered for triples $(X,H,Z)=(\mathbb{A}^n_\mathbb{C},H,\{0\})$, $n\geq 2$, where $H$ is the zero locus of a polynomial $g$ on $\mathbb{A}^n_{\mathbb{C}}=\mathrm{Spec}(\mathbb{C}[x_1,\ldots,x_n])$ having the origin $\{0\}$ as a zero of multiplicity one. For such triples, the hyperbolic modification $\hat{X}$ is isomorphic to $\mathbb{A}^{n+1}_\mathbb{C}=\mathrm{Spec}(\mathbb{C}[y_1,\ldots, y_n,t])$ and the family $\hat{f}:\hat{X}\to \mathbb{A}^1_{\mathbb{C}}$ concides with that defined by the unique polynomial $\hat{f}\in \mathbb{C}[y_1,\ldots, y_n,t]$ such that $g(y_1t,\ldots, y_nt)=t\hat{f}(y_1,\ldots, y_n,t)$.  For instance, taking for $g\in \mathbb{C}[x,y]$ the polynomial $(x+1)^{p}-(y+1)^q$, where $p,q\geq2$ is a pair of relatively prime integers, yields a family $\hat{f}:\mathbb{A}^3_{\mathbb{C}}\to \mathbb{A}^1_{\mathbb{C}}$ whose non-zero fibers are isomorphic to the tom Dieck - Petrie surfaces $S_{p,q}$ of Example \ref{exa:.-tom-Dieck-Petrie-surfaces}.
\end{ex}

More general versions of Kaliman and Zaidenberg techniques allow to
fully describe the singular homology of the analytification of the
deformation space $D(\mathbb{A}_{\mathbb{C}}^{n},Z)$ of a smooth
hypersurface $Z=Z(p)=\mathrm{Spec}(\mathbb{C}[x_{1},\ldots,x_{n}]/(p))$
of $\mathbb{A}_{\mathbb{C}}^{n}$ in terms of that of the analytification
of $Z$, namely 
\begin{propo}[{\cite[Proposition 4.1]{KZ99}}]
\label{prop:Homology-type-Def-space}For a smooth hypersurface $Z\subset\mathbb{A}_{\mathbb{C}}^{n}$,
$D(\mathbb{A}_{\mathbb{C}}^{n},Z)^{\mathrm{an}}$ is simply connected
and the inclusion $N_{Z/\mathbb{A}_{\mathbb{C}}^{n}}\hookrightarrow D(\mathbb{A}_{\mathbb{C}}^{n},Z)$
induces an isomorphism of reduced homology groups $\tilde{H}_{*}(D(\mathbb{A}_{\mathbb{C}}^{n},Z)^{\mathrm{an}};\mathbb{Z})\cong\tilde{H}_{*-2}(Z^{\mathrm{an}};\mathbb{Z})$. 
In particular, $D(\mathbb{A}_{\mathbb{C}}^{n},Z)^{\mathrm{an}}$ has
the reduced homology type of the $S^{2}$-suspension of $Z^{\mathrm{an}}$. 
\end{propo}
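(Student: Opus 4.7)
The plan is to combine Theorem~\ref{thm:Kaliman-Zaidenberg-MainThm}(a) for the $\pi_1$-statement with a long-exact-sequence-plus-Thom-isomorphism argument for the homology calculation.

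First I would set up the explicit model. Writing $Z = V(p) \subset \bbA^n_{\bbC}$ with $p$ having non-vanishing gradient along $Z$, Proposition~\ref{regupoly} identifies
\[
D(\bbA^n_{\bbC}, Z) \cong \Spec\,\bbC[x_1,\ldots,x_n,t,u]/(tu-p(x)),
\]
a smooth hypersurface of $\bbA^{n+2}_{\bbC}$. The composition $q\co D(\bbA^n_{\bbC}, Z)\to \bbA^n_{\bbC} \times \bbA^1_{\bbC} \to \bbA^1_{\bbC}$ of the dilatation map with the second projection has fiber $\bbA^n_{\bbC}$ over each nonzero point and fiber $N_{Z/\bbA^n_{\bbC}}$ over $\{0\}$. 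Setting $X = D(\bbA^n_{\bbC}, Z)^{\mathrm{an}}$, $V = q^{-1}(\mathbb{G}_m)^{\mathrm{an}} \cong \bbC^n \times \bbC^*$ and $N = q^{-1}(0)^{\mathrm{an}}$, this gives a decomposition $X = V \cup N$ into an open part $V$ homotopy equivalent to $S^1$ and a closed part $N$ homotopy equivalent to $Z^{\mathrm{an}}$.

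For the $\pi_1$-statement I would apply Theorem~\ref{thm:Kaliman-Zaidenberg-MainThm}(a) to the dilatation morphism $\sigma\co D(\bbA^n_{\bbC},Z) \to \bbA^n_{\bbC} \times \bbA^1_{\bbC}$ with divisor $\bbA^n_{\bbC}\times \{0\}$ and center $Z \times \{0\}$. The center has codimension two in the ambient space; smoothness of $D(\bbA^n_{\bbC}, Z)$ follows from the explicit equation combined with the smoothness of $Z$; the exceptional divisor $E = N_{Z/\bbA^n_{\bbC}}$ equals $\sigma^*(\bbA^n_{\bbC}\times\{0\})$ because $t$ alone cuts it out scheme-theoretically on $X$; and both $E_{\mathrm{red}}^{\mathrm{an}}$ and $D_{\mathrm{red}}^{\mathrm{an}} = \bbC^n$ are smooth manifolds (reducing to connected $Z$ if needed to secure irreducibility of $E$). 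Part (a) then yields $\pi_1(X) \cong \pi_1(\bbC^{n+1}) = 0$.

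For the homology isomorphism I would invoke the long exact sequence of the pair $(X,V)$. Since $N$ is a closed complex submanifold of $X$ whose complex normal bundle is locally trivialized by $t$, excision combined with the Thom isomorphism (using the canonical complex orientation) yields
\[
H_k(X,V) \;\cong\; H_{k-2}(N) \;\cong\; H_{k-2}(Z^{\mathrm{an}}),
\]
the source of the degree shift by two. Since $H_*(V) \cong H_*(S^1)$, for $k \geqslant 3$ the long exact sequence immediately gives $H_k(X) \cong H_{k-2}(Z^{\mathrm{an}})$, while the remaining tail reads
\[
0 \to H_2(X) \to H_0(Z^{\mathrm{an}}) \xrightarrow{\delta} H_1(V) \to H_1(X) \to 0.
\]
Using $H_1(X) = 0$ from the previous step, this identifies $H_2(X)$ with $\ker\delta$. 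The main obstacle is the computation of $\delta$: one must check that the Thom generator associated with a point $z \in Z^{\mathrm{an}}$ is represented by a small $2$-disk transverse to $N$ in the $t$-direction whose boundary winds once around $\{t=0\}$ inside the $\bbC^*$-factor of $V$, so that $\delta$ sends each component of $Z^{\mathrm{an}}$ to a generator of $H_1(V) \cong \bbZ$, whence $\ker \delta = \widetilde H_0(Z^{\mathrm{an}})$. Combining all degrees yields the claimed $\widetilde H_*(X) \cong \widetilde H_{*-2}(Z^{\mathrm{an}})$, which agrees with the isomorphism induced by the inclusion $N \hookrightarrow X$ via the resulting Gysin map; the final clause about the $S^2$-suspension then follows from $\widetilde H_*(S^2 \wedge W) \cong \widetilde H_{*-2}(W)$.
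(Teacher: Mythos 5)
The paper does not prove this statement: it is quoted directly from \cite[Proposition 4.1]{KZ99}, so there is no internal argument to compare against. Your proposal is a sound self-contained proof and is in the spirit of Kaliman--Zaidenberg's original excision argument. The explicit model $\{tu=p(x)\}\subset\mathbb{A}^{n+2}_{\mathbb{C}}$ via Proposition \ref{regupoly} is correct, $t$ does trivialize the conormal bundle of $N=q^{-1}(0)$ in $D(\mathbb{A}^n_{\mathbb{C}},Z)$, and the Thom--excision identification $H_k(X,V)\cong H_{k-2}(Z^{\mathrm{an}})$ together with the computation of the connecting map $\delta$ (each component of $Z$ hitting the generator of $H_1(V)\cong\mathbb{Z}$ with intersection number $+1$) gives exactly $\widetilde H_*(X)\cong\widetilde H_{*-2}(Z^{\mathrm{an}})$.

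The one genuine gap is the $\pi_1$ step. Theorem \ref{thm:Kaliman-Zaidenberg-MainThm} requires the exceptional divisor $E$ to be \emph{irreducible}, and here $E\cong Z\times\mathbb{A}^1$ is reducible whenever the smooth hypersurface $Z$ is disconnected (which the statement allows); your parenthetical ``reducing to connected $Z$'' does not obviously make sense, since the deformation space of a disconnected $Z$ does not decompose along the components of $Z$. The fix is to argue directly: $V\cong\mathbb{C}^n\times\mathbb{C}^*$ is the complement in the connected manifold $X$ of the real-codimension-two closed submanifold $N$, so $\pi_1(V)\to\pi_1(X)$ is surjective; and the generator of $\pi_1(V)\cong\mathbb{Z}$, represented by $\theta\mapsto(x_0,\varepsilon e^{i\theta},0)$ for any $x_0\in Z$, bounds the disk $\{(x_0,w,0):|w|\le\varepsilon\}\subset X$, hence dies in $\pi_1(X)$. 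This yields $\pi_1(X)=1$ with no irreducibility hypothesis, after which the rest of your argument (which uses $H_1(X)=0$ to identify $H_2(X)$ with $\ker\delta=\widetilde H_0(Z^{\mathrm{an}})$) goes through unchanged.
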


\subsection{Contractible affine varieties in motivic $\mathbb{A}^{1}$-homotopy
theory}

The possibility to import Kaliman and Zaidenberg techniques in the
framewok of Morel-Voevodsky $\mathbb{A}^{1}$-homotopy theory of schemes
\cite{MV99} has focused quite a lot of attention recently, especially
in the direction of the construction of $\mathbb{A}^{1}$-contractible
smooth affine varieties, motivated in part by possible applications
to the Zariski Cancellation Problem, see \cite{AO21} and the reference
therein for a survey. 

Very informally, one views in this context smooth schemes over a fixed
base field $k$ as analogous to topological manifolds, with the affine
line $\mathbb{A}_{k}^{1}$ playing the role of the unit interval,
and consider the corresponding homotopy category. More rigorously,
the $\mathbb{A}^{1}$-homotopy category $\mathrm{H}_{\mathbb{A}^{1}}(k)$
of $k$-schemes is defined as the left Bousfield localization of the
injective Nisnevich-local model structure on the category of simplicial
presheaves of sets on the category $\mathrm{Sm}_{k}$ of smooth $k$-schemes,
with respect to the class of maps generated by projections from the
affine line $\mathcal{X}\times_{k}\mathbb{A}_{k}^{1}\to\mathcal{X}$.
Isomorphisms in the homotopy category $\mathrm{H}_{\mathbb{A}^{1}}(k)$
are called $\mathbb{A}^{1}$-\emph{weak equivalences}, and a smooth
$k$-scheme $X$ is called $\mathbb{A}^{1}$-\emph{contractible} if
the structure morphism $X\to\mathrm{Spec}(k)$ is an isomorphism in
$\mathrm{H}_{\mathbb{A}^{1}}(k)$. 

\vspace{0.1cm}

The affine space $\mathbb{A}_{k}^{n}$ is by definition $\mathbb{A}^{1}$-contractible.
Since the analytification of an $\mathbb{A}^{1}$-contractible smooth
$\mathbb{C}$-variety is a contractible smooth manifold, smooth algebraic
$\mathbb{C}$-varieties with contractible analytifications provided
conversely a first natural framework to seek for interesting $\mathbb{A}^{1}$-contractible
affine varieties non isomorphic to affine spaces. A first step in
this direction was accomplished by Hoyois, Krishna and {\O}stv{\ae}r
\cite[Theorem 4.2]{HKO16} who used the underlying geometry associated
to the dilatations morphisms $\sigma_{p,q,n}:X_{p,q,n}\to\mathbb{A}_{\mathbb{C}}^{3}$
to verify that the Koras-Russell threefolds of Example \ref{exa:-KR-threefolds}
were $\mathbb{A}^{1}$-contractible possibly up to a finite number
of $\mathbb{P}^{1}$-suspensions, in the sense that for some $n\geq0$,
the suspension $(X_{p,q,n},o)\wedge(\mathbb{P}^{1})^{\wedge n}$ is
an $\mathbb{A}^{1}$-contractible object in $\mathrm{H}_{\mathbb{A}^{1}}(\mathbb{C})$,
where here, $X_{p,q,n}\subset\mathbb{A}_{\mathbb{C}}^{4}$ is considered
as a pointed smooth $\mathbb{C}$-scheme with distinguished point
$o=(0,0,0,0)$. 

\vspace{0.1cm} 

These first constructions motivated a more systematic study to obtain
$\mathbb{A}^{1}$-homotopic analogues of Kaliman and Zaidenberg's
topological comparison results for affine modifications. The best
counterparts of Theorem \ref{thm:Kaliman-Zaidenberg-MainThm} and
Corollary \ref{cor:Main-cor-Top} available so far are the following:
\begin{thm}[{\cite[Theorem 2.17]{DP019}}]
\label{thm:A1-Homotopical-KZ}Let $(X,D,Z,p)$ be a quadruple in $\mathrm{Sm}_{k}$
where $D$ is a Cartier divisor on $X$, $Z\subset D$ is a closed
subscheme and $p\in Z(k)$ is a $k$-rational point. Let $\sigma:\tilde{X}=\mathrm{Bl_{D}^{Z}X\to X}$ be
the dilatation of $X$ along $D$ with center at $Z$ and let $q\in \tilde{X}(k)$ be a $k$-rational point mapping to $p$.
We then view $\sigma$ as a morphism of pointed $k$-schemes $(\tilde{X},q)\to (X,p)$. 
Assume that the following conditions are satisfied:

(i) The supports of $D$ and of the exceptional divisor $E$ of $\sigma$
are irreducible;

(ii) The closed immersion $(Z,p)\hookrightarrow (D,p)$ is a pointed $\mathbb{A}^{1}$-weak
equivalence. 

\noindent Then there is a naturally induced pointed $\mathbb{A}^{1}$-weak
equivalence $\Sigma_{s}\sigma:\Sigma_{s}\tilde{X}\to\Sigma_{s}X$
between the simplicial $1$-suspensions of the pointed schemes $(\tilde{X},q)$ and $(X,p)$ respectively. 

In particular, if $X$ is $\mathbb{A}^{1}$-contractible then $\Sigma_{s}\tilde{X}$
is an $\mathbb{A}^{1}$-contractible sheaf. Moreover, a stronger conclusion
holds in the reverse direction: if $\tilde{X}$ is an $\mathbb{A}^{1}$-contractible 
then $X$ is $\mathbb{A}^{1}$-contractible. 
\end{thm}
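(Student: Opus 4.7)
The plan is to deduce the theorem from Morel--Voevodsky homotopy purity applied to the pairs $(X,D)$ and $(\tilde X, E)$, together with the description of the exceptional divisor as an affine bundle over the centre. First I would collect the necessary structural inputs. By the general theory of Section \ref{333}, $\sigma$ restricts to an isomorphism $\tilde X\setminus E \xrightarrow{\,\cong\,} X\setminus D$, and the scheme-theoretic equality $E=\sigma^{-1}(D)$ of Cartier divisors (for which hypothesis~$(i)$ is used to guarantee that $E$ has no extra components) yields $\mathcal{O}_{\tilde X}(E)\cong \sigma^{*}\mathcal{O}_{X}(D)$, and hence the identification $N_{E/\tilde X}\cong (\sigma|_{E})^{*}N_{D/X}$ of line bundles. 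Since $X, D, Z$ all lie in $\mathrm{Sm}_{k}$, the closed immersion $Z\hookrightarrow D$ is regular, so Proposition \ref{exceptional divisor} implies that $E\to Z$ is Zariski locally an affine bundle, and in particular a pointed $\mathbb{A}^{1}$-weak equivalence; composing with hypothesis~$(ii)$ then gives a pointed $\mathbb{A}^{1}$-weak equivalence $(E,q)\to (D,p)$. Smoothness of $\tilde X$ and of $E$ follows from the local regular-sequence description of dilatations (Proposition \ref{regupoly}).

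Next I would consider the diagram of pointed cofibre sequences induced by $\sigma$,
\[
\begin{CD}
(\tilde X\setminus E)_{+} @>>> \tilde X @>>> \tilde X/(\tilde X\setminus E) \\
@V{\cong}VV @VV{\sigma}V @VV{\bar\sigma}V \\
(X\setminus D)_{+} @>>> X @>>> X/(X\setminus D),
\end{CD}
\]
and apply Morel--Voevodsky homotopy purity to the smooth closed pairs $E\subset \tilde X$ and $D\subset X$ to identify $\bar\sigma$ with the map of Thom spaces $\mathrm{Th}(N_{E/\tilde X})\to \mathrm{Th}(N_{D/X})$. Using the identification $N_{E/\tilde X}\cong (\sigma|_{E})^{*}N_{D/X}$ recorded above, this Thom space map is precisely the one functorially induced by the $\mathbb{A}^{1}$-weak equivalence $E\to D$, and is therefore itself an $\mathbb{A}^{1}$-weak equivalence. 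The left column being an isomorphism and the right column an $\mathbb{A}^{1}$-weak equivalence, the usual Puppe sequence argument in the pointed $\mathbb{A}^{1}$-homotopy category---rotate once so that the first two terms of a cofibre sequence become the equivalent verticals---forces $\Sigma_{s}\sigma\colon \Sigma_{s}\tilde X\to \Sigma_{s}X$ to be an $\mathbb{A}^{1}$-weak equivalence, proving the main assertion. The implication ``$X\simeq\ast \Rightarrow \Sigma_{s}\tilde X\simeq\ast$'' is an immediate consequence.

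For the stronger reverse implication, assume $\tilde X$ is $\mathbb{A}^{1}$-contractible. Extending the top cofibre sequence to a Puppe sequence
\[
(X\setminus D)_{+}\longrightarrow \tilde X\longrightarrow \tilde X/(\tilde X\setminus E) \longrightarrow \Sigma_{s}(X\setminus D)_{+} \longrightarrow \Sigma_{s}\tilde X\longrightarrow\cdots
\]
and using $\tilde X\simeq \ast$, the connecting map $\tilde X/(\tilde X\setminus E)\to \Sigma_{s}(X\setminus D)_{+}$ is an $\mathbb{A}^{1}$-weak equivalence; transporting it along the equivalence $\bar\sigma$ of the diagram above shows that the corresponding connecting map of the bottom Puppe sequence is likewise an equivalence, and three consecutive terms of the bottom sequence then force $X\simeq \ast$.

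The main obstacle I foresee is matching the induced map $\bar\sigma$ on cofibres with the Thom space map of $E\to D$ under the purity isomorphism, together with careful bookkeeping of the pointed structure: this is precisely where hypothesis~$(i)$ is used, by guaranteeing that $E$ is the correct smooth irreducible Cartier divisor with $N_{E/\tilde X}$ genuinely pulled back from $N_{D/X}$, free of multiplicity or spurious component phenomena. Once this compatibility is firmly in place, the remainder of the argument is formal Puppe sequence manipulation in the pointed $\mathbb{A}^{1}$-homotopy category.
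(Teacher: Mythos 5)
This survey states the theorem without proof (it defers to \cite[Theorem 2.17]{DP019}), so your proposal must be measured against the argument in that reference; your overall strategy for the main assertion --- comparing the cofibre sequences of $(\tilde X,\tilde X\setminus E)$ and $(X,X\setminus D)$, invoking homotopy purity, identifying $N_{E/\tilde X}$ with $(\sigma|_E)^*N_{D/X}$, and rotating the Puppe sequence once to get the statement about simplicial suspensions --- is indeed the route taken there. However, two steps are asserted where real work is needed. First, the claim that the map $\mathrm{Th}(N_{E/\tilde X})\to\mathrm{Th}(N_{D/X})$ is an $\mathbb{A}^1$-weak equivalence ``because it is functorially induced by the $\mathbb{A}^1$-weak equivalence $E\to D$'' is not formal: writing $\mathrm{Th}(L)=L/(L\setminus 0)$, you must show that $(g^*L\setminus 0)\to(L\setminus 0)$ is an $\mathbb{A}^1$-weak equivalence, and this is a base change of $g\colon E\to D$ along a $\mathbb{G}_m$-torsor; $\mathbb{A}^1$-weak equivalences are not preserved by arbitrary (even smooth) base change, so this requires a separate lemma (it is precisely where the reference spends its effort, exploiting the factorization of $E\to D$ through the affine bundle $E\to Z$ and the closed immersion $Z\hookrightarrow D$).

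Second, and more seriously, your argument for the reverse implication does not deliver the stated conclusion. From the Puppe sequence with $\tilde X\simeq\ast$ you correctly deduce that the connecting map $X/(X\setminus D)\to\Sigma_s(X\setminus D)_+$ is an equivalence, but the ``three consecutive terms'' step then only yields that its cofibre, namely $\Sigma_s X$, is contractible. Unstably one cannot desuspend: $\Sigma_s X\simeq\ast$ does not imply $X\simeq\ast$ without further input, typically $\mathbb{A}^1$-simple connectedness of $X$ together with an $\mathbb{A}^1$-Whitehead/Hurewicz argument. The theorem explicitly advertises the reverse direction as \emph{stronger} for exactly this reason, and the reference supplements the formal cofibre-sequence manipulation with control of $\pi_1^{\mathbb{A}^1}(X)$ (the motivic analogue of part (a) of Theorem \ref{thm:Kaliman-Zaidenberg-MainThm}). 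As written, your proof establishes only $\Sigma_s X\simeq\ast$, so this step is a genuine gap.
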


\begin{cor}
\label{cor:DefSpace-A1-Cor}Let $i:Y\hookrightarrow X$ be a closed
immersion between $\mathbb{A}^{1}$-contractible smooth $k$-schemes, which we view as morphism
of pointed schemes for a choice of $k$-rational point $p\in Y(k)$.
Then the simplicial $1$-suspension $\Sigma_{s}D(X,Y)$ of the deformation
space $D(Y,X)$ of $Y$ in $X$ (viewed as a $k$-scheme pointed by $p \times {0}\subset D(Y,X)$) is $\mathbb{A}^{1}$-contractible. 
\end{cor}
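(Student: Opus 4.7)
The plan is to recognize $D(X,Y)$ as a dilatation that fits the hypotheses of Theorem~\ref{thm:A1-Homotopical-KZ}. Recall from §\ref{subsec:DefSpace} that
\[
D(X,Y)\;=\;\Bl_{Y\times_{k}\{0\}}^{X\times_{k}\{0\}}\bigl(X\times_{k}\bbA^{1}_{k}\bigr).
\]
I would therefore apply Theorem~\ref{thm:A1-Homotopical-KZ} to the quadruple
\[
\bigl(X\times_{k}\bbA^{1}_{k},\ X\times_{k}\{0\},\ Y\times_{k}\{0\},\ (p,0)\bigr),
\]
taking as lift $q\in D(X,Y)(k)$ the prescribed point $p\times\{0\}$, which exists in the exceptional fibre over $(p,0)\in Y\times_{k}\{0\}$ (for instance as the image of $p$ under the zero section of $N_{Y/X}\cong D(X,Y)|_{\{0\}}$).

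Next, I verify hypothesis (i) of Theorem~\ref{thm:A1-Homotopical-KZ}. Since $X$ is smooth and $\bbA^{1}$-contractible and contains the $k$-rational point $p$, it is connected and hence irreducible, so $X\times_{k}\{0\}\cong X$ is irreducible. The closed immersion $Y\hookrightarrow X$ is a regular immersion (closed immersion between smooth $k$-schemes), hence so is $Y\times_{k}\{0\}\hookrightarrow X\times_{k}\{0\}$; by Proposition~\ref{exceptional divisor}(1) the exceptional divisor of the dilatation is Zariski-locally an affine bundle over $Y$, and in particular irreducible since $Y$ is irreducible. For hypothesis (ii), under the identifications $X\times_{k}\{0\}\cong X$ and $Y\times_{k}\{0\}\cong Y$ the pointed inclusion $(Y\times_{k}\{0\},(p,0))\hookrightarrow(X\times_{k}\{0\},(p,0))$ is just $(Y,p)\hookrightarrow(X,p)$; both source and target are pointed $\bbA^{1}$-contractible by assumption, so any map between them, in particular $i$, is automatically a pointed $\bbA^{1}$-weak equivalence.

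Finally, the ambient scheme $X\times_{k}\bbA^{1}_{k}$ is $\bbA^{1}$-contractible: the projection $X\times_{k}\bbA^{1}_{k}\to X$ is an $\bbA^{1}$-weak equivalence, and $X$ is $\bbA^{1}$-contractible by assumption. Theorem~\ref{thm:A1-Homotopical-KZ} then yields immediately that $\Sigma_{s}D(X,Y)$ is $\bbA^{1}$-contractible, which is the desired conclusion. The step that I expect to require the most attention is the irreducibility of the exceptional divisor, which rests on combining the regular immersion property of $Y\hookrightarrow X$ with Proposition~\ref{exceptional divisor}(1) and the connectedness of $Y$; once this is in place the remainder of the argument is a direct application of Theorem~\ref{thm:A1-Homotopical-KZ} and routine unwinding of the definition of $D(X,Y)$.
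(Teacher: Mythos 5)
Your proposal is correct and follows essentially the same route as the paper: identify $D(X,Y)$ as the dilatation $\mathrm{Bl}_{Y\times\{0\}}^{X\times\{0\}}(X\times_k\mathbb{A}^1_k)$, note that $X\times_k\mathbb{A}^1_k$ is $\mathbb{A}^1$-contractible by homotopy invariance and that $(Y\times\{0\},p\times\{0\})\hookrightarrow(X\times\{0\},p\times\{0\})$ is a pointed $\mathbb{A}^1$-weak equivalence since both sides are contractible, and then invoke Theorem~\ref{thm:A1-Homotopical-KZ}. Your extra verification of the irreducibility hypothesis (i), which the paper leaves implicit, is sound (smooth plus connected gives irreducible, and the exceptional divisor is an affine bundle over the irreducible $Y$ by Proposition~\ref{exceptional divisor}).
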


\begin{proof} Since $X$ is $\mathbb{A}^1$-contractible, so is $X\times \mathbb{A}^1$ by $\mathbb{A}^1$-homotopy invariance. Since $Y$ is also $\mathbb{A}^1$-contractible,  $i\times \mathrm{id}:(Y\times \{0\},p\times \{0\}) \hookrightarrow (X \times \{0\}, p\times\{0\})$ is a pointed  $\mathbb{A}^1$-weak equivalence. Viewing $X\times \mathbb{A}^1$ as a $k$-scheme pointed by $p\times \{0\}\subset Y\times\{0\}\subset X\times \{0\}$, Theorem \ref{thm:A1-Homotopical-KZ} then implies that the dilatation morphism $\sigma : D(Y,X)=\mathrm{Bl}_{Y\times \{0\}}^{X\times \{0\}}(X\times \mathbb{A}^1)\to X\times \mathbb{A}^1$ induces an $\mathbb{A}^1$-weak equivalence between $ \Sigma_{s}D(X,Y)$ and the $\mathbb{A}^1$-contractible presheaf $\Sigma_{s} (X\times \mathbb{A}^1)$ in $\mathrm{H}_{\mathbb{A}^{1}}(k)$.
\end{proof}
In contrast with the results of subsection \ref{subsec:classical-top}
which can be applied to possibly singular triples $(X,D,Z)$, Theorem
\ref{thm:A1-Homotopical-KZ} and its corollary fundamentally depend
on smoothness hypotheses. In particular, Theorem \ref{thm:A1-Homotopical-KZ}
is not applicable to tom Dieck -Petrie surfaces and Koras-Russell
threefolds over $\mathbb{C}$ and their natural generalization over
other fields. It was nevertheless verified in \cite{DF18} by different
geometric methods that over any base field $k$ of characteristic
zero, the Koras-Russell threefolds $X_{p,q,n}=\{x^{n}y+z^{p}+w^{q}+x=0\}$
are indeed all $\mathbb{A}^{1}$-contractible. These provide in turn
when combined with Theorem \ref{thm:A1-Homotopical-KZ} and Corollary
\ref{cor:DefSpace-A1-Cor} the building blocks for the construction
of many other new examples of smooth affine $k$-varieties whose simplicial
$1$-suspensions are $\mathbb{A}^{1}$-contractible, among which some
can be further verified by additional methods to be genuinely $\mathbb{A}^{1}$-contractible,
see \cite[Section 4]{DP019}.

\vspace{0.1cm}

A more detailed re-reading of the notion of deformation to the normal
cone of a closed immersion $Y\hookrightarrow X$ between smooth $k$-schemes
gives rise to a notion of ``parametrized'' deformation space over
a smooth base $k$-scheme $W$, which is defined as a dilatation of
the scheme $Y\times_{k}W$ with appropriate center, see \cite[Construction 2.1.2]{ADO21}.
This leads to the following counterpart and extension of Proposition
\ref{prop:Homology-type-Def-space} in the $\mathbb{A}^{1}$-homotopic
framework:
\begin{thm}[{\cite[Theorem 2]{ADO21}}]
\label{thm:-Deformation=00003Dsuspension} Let $X$ be a smooth $k$-scheme,
let $\pi:X\to\mathbb{A}_{k}^{n}$ be a smooth morphism with a section
$s$. Assume that $\pi|_{\pi^{-1}(\mathbb{A}_{k}^{n}\setminus\{0\})}:\pi^{-1}(\mathbb{A}_{k}^{n}\setminus\{0\})\to\pi^{-1}(\mathbb{A}_{k}^{n}\setminus\{0\})$
is an $\mathbb{A}^{1}$-weak equivalence. Then there exists an induced
pointed $\mathbb{A}^{1}$-weak equivalence 
\[
(X,s(0))\sim(\mathbb{P}^{1})^{\wedge n}\wedge(\pi^{-1}(0),s(0)).
\]
In particular, the deformation space $D(X,Y)$ of a closed immersion
$(Y,\star)\hookrightarrow(X,\star)$ between pointed smooth $k$-schemes
is $\mathbb{A}^{1}$-weakly equivalent to $\mathbb{P}^{1}\wedge(Y,\star)$. 
\end{thm}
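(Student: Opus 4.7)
My plan is to exploit Morel--Voevodsky's homotopy purity theorem together with the section $s$ to compare cofibre sequences in the pointed $\bbA^1$-homotopy category. First I would observe that $s \co \bbA_k^n \to X$ is a closed immersion (being a section of the smooth separated morphism $\pi$), and that its restriction $s|_{\bbA_k^n \setminus \{0\}} \co \bbA_k^n \setminus \{0\} \to \pi^{-1}(\bbA_k^n \setminus \{0\})$ is a one-sided inverse of the hypothesised $\bbA^1$-weak equivalence $\pi|_{\pi^{-1}(\bbA_k^n \setminus \{0\})}$, hence is itself an $\bbA^1$-weak equivalence. Writing $Y_0 := \pi^{-1}(0)$, the smoothness of $\pi$ yields that $Y_0 \hookrightarrow X$ is a regular closed immersion between smooth $k$-schemes whose normal bundle $N_{Y_0/X} \cong \pi^{*}N_{\{0\}/\bbA_k^n}|_{Y_0} \cong \calO_{Y_0}^{\,n}$ is trivial of rank $n$, canonically trivialised in a way compatible with $s$.

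Next I would apply homotopy purity to the two pairs $(\bbA_k^n,\{0\})$ and $(X,Y_0)$ in order to produce a commutative ladder of cofibre sequences, whose top row is $\bbA_k^n \setminus \{0\} \to \bbA_k^n \to (\bbP^1)^{\wedge n}$, whose bottom row is $X \setminus Y_0 \to X \to \mathrm{Th}(N_{Y_0/X}) \simeq (\bbP^1)^{\wedge n} \wedge (Y_0)_+$, and whose vertical arrows are induced by $s$. The left vertical arrow is the $\bbA^1$-weak equivalence noted above, and, thanks to the trivialisation of $N_{Y_0/X}$, the right vertical arrow matches the inclusion $(\bbP^1)^{\wedge n} = (\bbP^1)^{\wedge n} \wedge \{s(0)\}_+ \hookrightarrow (\bbP^1)^{\wedge n} \wedge (Y_0)_+$ coming from $\{s(0)\}_+ \hookrightarrow (Y_0)_+$. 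The gluing lemma for homotopy pushouts (equivalently the octahedral axiom applied after one $S^1_s$-suspension) then identifies the cofibre of the middle arrow $s \co \bbA_k^n \to X$ with the cofibre of the right arrow.

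Finally I would compute both cofibres. On the middle, $\bbA_k^n$ is $\bbA^1$-contractible and $s$ is a closed immersion, so the cofibre $X/s(\bbA_k^n)$ is $\bbA^1$-equivalent to the pointed object $(X, s(0))$. On the right, the cofibre equals $(\bbP^1)^{\wedge n} \wedge \bigl((Y_0)_+/\{s(0)\}_+\bigr) = (\bbP^1)^{\wedge n} \wedge (Y_0, s(0))$. Chaining the two equivalences yields the desired pointed $\bbA^1$-weak equivalence $(X, s(0)) \simeq (\bbP^1)^{\wedge n} \wedge (\pi^{-1}(0), s(0))$.

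For the ``in particular'' assertion I would specialise to $\pi = f \co D(X,Y) \to \bbA_k^1$, which is smooth by Proposition \ref{blow.up.smoothness.prop}(4) applied to the regular immersion $Y \times \{0\} \hookrightarrow X \times \{0\}$, and which carries the constant section determined by the basepoint $\star \in Y$. The fibre over $0$ is the normal bundle $N_{Y/X}$, itself $\bbA^1$-equivalent to $(Y,\star)$ by homotopy invariance of vector bundles; the hypothesis of the theorem on $f|_{f^{-1}(\bbA_k^1 \setminus \{0\})} = \mathrm{pr}_2 \co X \times (\bbA_k^1 \setminus \{0\}) \to \bbA_k^1 \setminus \{0\}$ reduces in this setting to $\bbA^1$-contractibility of $X$, granting which the main statement delivers $D(X,Y) \simeq \bbP^1 \wedge (Y,\star)$. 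The main technical obstacle, in my view, lies in carrying out the homotopy-purity comparison in sufficient detail to identify the induced map on Thom spaces with the inclusion of the fibre over $s(0)$, and in verifying that all the cofibre manipulations above really live in the pointed unstable $\bbA^1$-homotopy category and not merely after $\bbP^1$-stabilisation.
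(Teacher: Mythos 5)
The survey states this theorem without proof, citing [ADO21]; your argument — homotopy purity applied to the transverse map of smooth closed pairs $(\{0\}\subset\mathbb{A}_k^n)\to(\pi^{-1}(0)\subset X)$ induced by $s$, the identification of the two horizontal cofibres as Thom spaces, and the total-cofibre comparison using that $s|_{\mathbb{A}_k^n\setminus\{0\}}$ is an $\mathbb{A}^1$-weak equivalence by two-out-of-three — is correct and is essentially the strategy of that reference, with the functoriality of the purity equivalence for such cartesian squares (via the deformation to the normal cone) being the standard input you rightly single out. Your remark that the ``in particular'' clause needs $X$ itself to be $\mathbb{A}^1$-contractible, so that $\mathrm{pr}_2\colon X\times(\mathbb{A}^1_k\setminus\{0\})\to\mathbb{A}^1_k\setminus\{0\}$ meets the hypothesis of the main statement, is also correct: that assumption is left implicit in the survey's formulation and should be added.
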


\begin{ex}
Let $Q_{2n}\subset\mathbb{A}_{k}^{2n+1}=\mathrm{Spec}(k[u_{1},\ldots,u_{n},v_{1},\ldots,v_{n},z])$
be the smooth $2n$-dimensional split quadric with equation $\sum_{i=1}^{n}u_{i}v_{i}=z(z+1)$.
The projection $\pi=\mathrm{pr}_{u_{1},\ldots,u_{,n}}:Q_{2n}\to\mathbb{A}_{k}^{n+1}$
is a smooth morphism restricting to a Zariski locally trivial $\mathbb{A}^{n}$-bundle
over $\mathbb{A}_{k}^{n}\setminus\{0\},$ hence to an $\mathbb{A}^{1}$-weak
equivalence over $\mathbb{A}_{k}^{n}\setminus\{0\}$, and having the
morphism 
\[
s:\mathbb{A}_{k}^{n}\to Q_{2n},\,(u_{1},\ldots,u_{n})\mapsto(u_{1},\ldots,u_{n},0,\ldots,0,0)
\]
as a natural section. On the other hand, $\pi^{-1}(0)$ is $\mathbb{A}^{1}$-weakly
equivalent to the disjoint union of $s(0)$ and of the point $p=(0,\ldots,0\ldots,-1)$.
Theorem \ref{thm:-Deformation=00003Dsuspension} thus renders the
conclusion that $(Q_{2n},s(0))$ is $\mathbb{A}^{1}$-weakly equivalent
to $(\mathbb{P}^{1})^{\wedge n}\wedge (p\sqcup s(0))\sim(\mathbb{P}^{1})^{\wedge n}$.
In particular $Q_{2n}$ provides a smooth $k$-scheme model of the
motivic sphere $(\mathbb{P}^{1})^{\wedge n}=S^{n}\wedge\mathbb{G}_{m,k}^{\wedge n}$. 
\end{ex}

\bigbreak\bigbreak
\noindent \par

\noindent 
\noindent Institut de Math\'ematiques de Bourgogne, UMR 5584 CNRS, Universit\'e de Bourgogne, F-21000, Dijon ; 
Laboratoire de Math\'ematique et Applications, UMR 7348 CNRS, Universit\'e de Poitiers, F-86000, Poitiers. \par
\noindent E-mail address: \texttt{adrien.dubouloz@math.cnrs.fr}
\noindent  

\medskip

\noindent 
\noindent Einstein Institute of Mathematics,
Edmond J. Safra Campus,
The Hebrew University of Jerusalem,
Givat Ram. Jerusalem, 9190401, Israel \par
\noindent E-mail address: \texttt{arnaud.mayeux@mail.huji.ac.il}
\noindent

\medskip

\noindent 
\noindent Institut Montpelliérain Alexander Grothendieck, Université de Montpellier. Montpellier, France \par
\noindent E-mail address: \texttt{joao\_pedro.dos\_santos@yahoo.com}

\end{document}